\newcommand{\db}{\bar{\Delta}}
\newcommand{\cccp}{(1,\frac{\text{ch}_1}{\text{ch}_0},\frac{\text{ch}_2}{\text{ch}_0})\text{-plane}}
\newcommand{\pp}{\textbf P^2}
\renewcommand\appendix{\par
  \setcounter{section}{0}
  \setcounter{subsection}{0}
  \setcounter{figure}{0}
  \setcounter{table}{0}
  \renewcommand\thesection{Appendix \Alph{section}}
  \renewcommand\thefigure{\Alph{section}\arabic{figure}}
  \renewcommand\thetable{\Alph{section}\arabic{table}}
}
\newtheorem{theorem}{Theorem}[section]
\newtheorem{defn}[theorem]{Definition}
\newtheorem{prop}[theorem]{Proposition}
\newtheorem{propd}[theorem]{Proposition and Definition}
\newtheorem{cor}[theorem]{Corollary}
\newtheorem{lemma}[theorem]{Lemma}
\newtheorem{rem}[theorem]{Remark}
\newtheorem{nota}[theorem]{Notation}
\newtheorem{eg}[theorem]{Example}
\newtheorem{conj}[theorem]{Conjecture}
\def\C{\ensuremath{\mathbb{C}}}
\def\R{\ensuremath{\mathbb{R}}}
\def\Z{\ensuremath{\mathbb{Z}}}
\def\ch{\mathop{\mathrm{ch}}\nolimits}
\def\Coh{\mathop{\mathrm{Coh}}\nolimits}
\def\Hom{\mathop{\mathrm{Hom}}\nolimits}
\def\Ker{\mathop{\mathrm{Ker}}\nolimits}
\def\min{\mathop{\mathrm{min}}\nolimits}
\def\glt{\tilde{\mathrm{GL}}^+(2,\mathbb R)}
\def\HH{\mathrm{H}}
\def\KK{\mathrm{K}}
\newcommand{\pkp}{\mathrm{P}\left(\mathrm{K}_{\mathbb R}(\mathbf {P}^2)\right)}
\def\Stab{\mathop{\mathrm{Stab}}\nolimits}
\title{The Space of Stability Conditions on the Projective Plane}
\author{Chunyi Li}
\date{\today}
\def\ch{\mathrm{ch}}
\def\pp{\mathbf{P}^2}
\def\ppp{\mathbf{P}^1}
\def\cccp{\{1,\frac{\ch_1}{\ch_0},\frac{\ch_2}{\ch_0}\}\mathrm{-plane}}
\def\Db{\mathrm{D}^{\mathrm{b}}}
\def\C{\ensuremath{\mathbb{C}}}
\def\R{\ensuremath{\mathbb{R}}}
\def\Z{\ensuremath{\mathbb{Z}}}
\def\cE{\ensuremath{\mathcal E}}
\def\cF{\ensuremath{\mathcal F}}
\def\cH{\ensuremath{\mathcal H}}
\def\cO{\ensuremath{\mathcal O}}
\def\cP{\ensuremath{\mathcal P}}
\def\cT{\ensuremath{\mathcal T}}
\def\Std{\Stab^{\dag}}
\def\sdp{\Std(\pp)}
\def\Geo{\mathrm{Geo}}
\def\Alg{\mathrm{Alg}}
\def\MZ{\mathrm{MZ}}
\def\TR{\mathrm{TR}}
\begin{document}
\maketitle
\begin{abstract}
The space of Bridgeland stability conditions on the bounded derived
category of coherent sheaves on $\pp$ has a principle connected
component $\Stab^\dag$(\textbf P$^2$). We show that Stab$^\dag$(\textbf P$^2$) is the union of geometric and algebraic
stability conditions. As a consequence, we give a cell
decomposition for Stab$^\dag$ (\textbf P$^2$) and show that
Stab$^\dag$(\textbf P$^2$) is contractible.
\end{abstract}

\section*{Introduction}
Motivated by the concept of $\Pi$-stability condition on string theory by Douglas, the notion of a stability condition, $\sigma=(\mathcal P,Z)$, on a $\mathbb C$-linear triangulated category $\mathcal T$ was first introduced by Bridgeland in \cite{Bri07}. In the notion, the central charge $Z$ is a group homomorphism from the numerical Grothendieck group K$_0(\cT)$ to $\C$.  Bridgeland proves that the space of stability conditions inherits a natural complex manifold structure via local charts of central charges in $\Hom_{\Z}(\KK_0(\cT),\C)$. In particular, when K$_0(\cT)$ has finite rank, the space of stability condition (satisfying support condition),  $\Stab(\cT)$, has complex dimension rank(K$_0(\cT)$).


As mentioned in \cite{BriSS}, $\Stab(\cT)$ is expected to be related to the study of string theory and mirror symmetry. The main interesting example is to understand the space of stability conditions on a compact Calabi-Yau threefold $X$ such as a quintic in $\mathbf P^4$. Yet this problem is still wildly open mainly due to some technical difficulties.
Although the compact Calabi-Yau threefold case is still difficult to study, $\Stab(\cT)$ of various analog categories has been very well understood, see \cite{BSW, BQS, DK16, Ike14, Qiu15}. While most of these examples are build from quivers or locally derived category of sub-varieties, few cases of $\Stab(X)$ for smooth compact varieties $X$ are known. Such $\Stab(X)$ is `well-understood' only when $X$ is $\ppp$(\cite{Ok}), a curve (\cite{Bri07}), a K3 surface (\cite{Bri08,BB}), an abelian surface or threefold (\cite{BMS}). In this paper, based on some important technical results from \cite{Mac05} and \cite{Mac}, we make an attempt to analyze the space $\Stab(\pp)$.

\begin{theorem}[Theorem \ref{thm:stabdag}, Corollary \ref{cor:main}]
Let $\Std(\pp)$ be the connected component in $\Stab(\pp)$ that contains the geometric stability conditions, then
\[\Std(\pp) = \Stab^{\Geo}(\pp) \bigcup \Stab^{\Alg}(\pp).\]
In particular, $\Std(\pp)$ is contractible.
 \label{thm:main}
\end{theorem}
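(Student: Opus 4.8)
The plan is to prove the set equality by showing that the right-hand side is both open and closed in the connected component $\Std(\pp)$, and then to deduce contractibility from an explicit cell structure together with the free action of the contractible group $\glt$. Since $\KK_0(\Db(\pp))$ has rank three, a central charge is recorded by three complex numbers, and after normalizing by the $\glt$-action I would realize every geometric stability condition inside the standard tilt-family $\sigma_{s,t}$ constructed in \cite{Mac}, obtained by tilting $\Coh(\pp)$ at the slope cut out by the real parameter $s$ and rotating by $t>0$. Using that construction and the Bogomolov-type inequality of \cite{Mac} (together with \cite{Mac05}), I would pin down exactly which $(s,t)$ give honest stability conditions and identify $\Stab^{\Geo}(\pp)$ with the $\glt$-orbit of this two-real-parameter region. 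The essential geometric input is that the boundary walls of this region are governed by the exceptional bundles on $\pp$ (the Dr\'ezet--Le Potier classification): on each such wall a skyscraper sheaf $\cO_x$ acquires a destabilizing subobject and ceases to be stable.

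Next I would describe $\Stab^{\Alg}(\pp)$. These are the stability conditions whose heart is, up to shift and the $\Aut$-action, the finite-length category of representations of the Beilinson quiver attached to a strong exceptional triple such as $\langle \cO,\cO(1),\cO(2)\rangle$ and to its mutations. On such a heart a stability condition is specified freely by placing the three simple objects anywhere in the upper half-plane, so each algebraic chamber is a cell; the exceptional triples and their mutations are indexed combinatorially, yielding a locally finite family of such cells.

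The crux, and the step I expect to be the main obstacle, is to prove that $S:=\Stab^{\Geo}(\pp)\cup\Stab^{\Alg}(\pp)$ exhausts the component. Since $S$ is nonempty and $\Std(\pp)$ is connected, it suffices to show $S$ is open and closed. Openness is immediate at interior geometric and interior algebraic points; at a wall separating a geometric region from an algebraic chamber I would check that a whole neighborhood decomposes into geometric points on one side, algebraic points on the other, and the wall itself, all of which lie in $S$. Closedness is the hard part: taking $\sigma_n\to\sigma$ with $\sigma_n\in S$, I must show the heart of the limit is again admissible. The mechanism I expect is that crossing an outer wall of the geometric region is a tilt landing precisely in an algebraic heart, whose simples form a mutated exceptional collection, while every wall of an algebraic chamber leads either to an adjacent algebraic chamber or back into the geometric region. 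Controlling which objects can destabilize along a given wall is the delicate point, and here the classification of exceptional bundles and stable Chern characters on $\pp$ is indispensable: it is what rules out gaps in the wall-and-chamber structure and hence stability conditions of a third, inadmissible kind.

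Finally, for contractibility I would pass to the quotient by $\glt\cong\C\times\H$. As this group is contractible and acts freely, $\Std(\pp)$ is homotopy equivalent to the two-real-dimensional quotient $\Std(\pp)/\glt$, which inherits the cell decomposition assembled above: a central contractible geometric piece with the algebraic cells glued along its boundary walls in the fan-like pattern dictated by the mutation combinatorics. I would then build an explicit deformation retraction of this quotient onto the geometric piece, pushing each algebraic cell back across its shared wall, and finally retract the geometric piece to a point. The only care required is continuity of the retraction across the wall strata, which follows from the compatibility of central charges established in the closedness argument.
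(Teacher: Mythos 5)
Your overall strategy (open--and--closed in the connected component, then a retraction built from a cell structure) is the same as the paper's, but two of your steps have genuine gaps.

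First, in the closedness argument you assert that the boundary walls of the geometric region are ``governed by the exceptional bundles,'' with a skyscraper sheaf acquiring a destabilizing subobject on each wall. That only accounts for part of $\partial\Stab^{\Geo}(\pp)$. The boundary of $\Geo_{LP}$ consists of the segments $l_{ee^+}$ attached to exceptional characters \emph{and} a fractal (Cantor-like) set of points lying on the parabola $\Delta_{\frac{1}{2}}$ that are not on any such segment. A limit $\sigma_n\to\sigma$ whose kernel lands on one of the latter points is not on any destabilizing wall for $k(x)$, and no tilt into an algebraic heart is available there. The paper handles this case by showing that no stability condition can exist at all: using the Dr\'ezet--Le Potier classification and the last-wall results, there are $\sigma$-stable objects with characters $v_n$ accumulating at $\Ker Z$, so $|Z(v_n)|\lesssim \frac{1}{n}\|v_n\|$ and the support condition fails. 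Without this (and without a separate treatment of limits with degenerate central charge, which your normalization by $\glt$ quietly assumes away), closedness of $\Stab^{\Geo}\cup\Stab^{\Alg}$ is not established.

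Second, your contractibility argument quotients the whole component by $\glt$ and claims a homotopy equivalence because the group ``acts freely.'' It does not: the action is free only on the non-degenerate locus $\Stab^{\mathrm{nd}}(\pp)$, and $\Stab^{\Alg}(\pp)$ contains degenerate stability conditions (e.g.\ points of $\Theta^{\mathrm{Pure}}_{\cE}$ with $\phi_2-\phi_1$ and $\phi_3-\phi_2$ integral, where all three central charges lie on a real line); the paper states explicitly that $\glt$ does not act freely on the algebraic part. Consequently $\Std(\pp)/\glt$ is not a two-dimensional space carrying your cell decomposition, and the claimed homotopy equivalence fails at exactly the tails you most need to retract. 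The paper's route avoids this: it first retracts each tail $\Theta^{\mathrm{Pure}}_{\cE}$ onto its boundary with the legs, then retracts each leg $\Theta^{\pm}_{E}$ onto the geometric part (using the disjointness statements for distinct legs and tails to do this simultaneously), and only then invokes the free $\glt$-action on $\Stab^{\Geo}(\pp)$ together with $\Stab^{\Geo}(\pp)/\glt\simeq\Geo_{LP}$.
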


Here $\Stab^{\Geo}(X)$ denotes the space of geometric stability conditions (Definition \ref{defn:stabgeo}), at where the sky-scraper sheaves are stable with the same phase. $\Stab^{\Alg}(\pp)$ denotes the space of algebraic stability conditions (Definition \ref{def: theta E}), which can be constructed from exceptional collections.

\textbf{Rough description for $\sdp$:} We first describe the geometric part $\Stab^{\Geo}(X)$. When $X$ is a smooth surface, by the philosophy of \cite{Bri08} and \cite{BB}, $\Stab^{\Geo}(X)$ can be determined once people know the Chern characters of Gieseker-stable sheaves. The $\glt$-action (see Lemma 8.2 in \cite{Bri07}) acts freely on the part of $\Stab^{\Geo}(X)$. Any point in $\Stab^{\Geo}(X)/\glt$ is uniquely determined by the kernel of its central charge, which is a linear subspace in K$_\R(X)$ of real codimension two. From the inverse side, such a linear subspace can be realized as the kernel of a central charge if and only if one can construct a quadratic form $Q$ on K$_\R(X)$ satisfying the support condition (see the definition above Definition \ref{defn:stabgeo}) for this subspace. In the  case that $X$ is of Picard number one, one may take the projectivization of K$_\R(X)$, $\Ker Z$ is a point on P($\KK_\R(X)$). A point on $\pkp$ can be the kernel of a central charge if and only if it has an open neighborhood which is not `below' any Gieseker stable character. 

Now we focus on the case that $\cT$ is D$^b(\pp)$. The (projective) Gieseker stable characters have been completely determined in \cite{DP} by Drezet and Le Potier. On $\pkp$, the characters form a dense set below the Le Potier curve (see Definition \ref{lpcurve}) together with some isolated points of exceptional characters.

For the algebraic part $\Stab^{\Alg}(\pp)$, it goes back to the work \cite{Be} that D$^b(\pp)$ can be generated by an exceptional collection $\{ \cO,\cO(1), \cO(2)\}$. One can do mutations between the exceptional objects to get other exceptional triples, such as $\{ \cO(1),\cT_{\pp}, \cO(2)\}$, $\{ \cO(-4),\cO(-3), \cO(-2)\}$, which also generates the category. For each exceptional triple $\cE=\langle E_1,E_2,E_3\rangle$, one may assign numbers $z_j=m_j\exp(i\pi \phi_j)$, $\phi_j$ as the central charges and phases of $E_j$. Due to the result in $\cite{Mac}$,  when $m_j\in \R_{>0}$, $\phi_1<\phi_2<\phi_3,\text{ and }\phi_1+1<\phi_3$, there is a unique stability condition with the given central charge and $E_i\in \cP(\phi_i)$. Denote all such stability conditions by $\Theta_\cE$ with parameters $m_j$ and $\phi_j$.   The space of algebraic stability conditions $\Stab^{\Alg}(\pp)$ is the union of all $\Theta_\cE$. Note that the $\glt$-action does not act freely on $\Stab^{\Alg}(\pp)$. Each $\Theta_\cE$ can be divided into three parts: the head $\Theta_\cE^{\Geo}$; the legs $\Theta^{+}_{\mathcal E,E_1}$,
 $\Theta^{-}_{\mathcal E,E_3}$; and the tail $\Theta^{\mathrm{Pure}}_{\mathcal E}$ (see Definition \ref{def: theta E}). The head part is the overlap part with the geometric stability conditions, this is the only part of $\Theta_\cE$ that `glues' on the $\Stab^{\Geo}(X)$. The leg part overlaps with other algebraic stability conditions, we will show that any two legs of $\Theta_\cE$ and $\Theta_{\cE'}$ are either the same, or separated from each other (see Proposition \ref{prop:legsareunique}). Each tail part $\Theta^{\mathrm{Pure}}_{\mathcal E}$ is a private area for $\Theta_\cE$, which is separated from any other $\Theta_{\cE'}$ (see Lemma \ref{lemma:purepure}). We will show that one may contract the whole space of  $\Stab^{\Alg}(\pp)$ by first contracting all the tails simultaneously to their boundaries with legs, and then contracting all the legs to their boundaries with heads. The union of all heads $\bigcup \Theta_\cE^{\Geo}$ is a $\glt$-bundle over an open subset of $\Stab^{\Geo}(\pp)$, which is contractible.

\textbf{Related works:} Many important technical results  on $\Stab^{\Alg}(\pp)$ have been set up in \cite{Mac05} and \cite{Mac}, and our result is a natural continuation of the previous work. The space $\sdp$ can be compared with some previous geometric examples such as $\Std$(K3) and $\Std$(local $\pp$). As described in the previous section, their geometric parts $\Stab^{\Geo}(X)$ are quite similar.  In addition, each exceptional/spherical object provides two boundary sets of $\Stab^{\Geo}(X)$. But the remaining parts are very different,  for a K3 surface or local $\pp$, the remaining parts can be viewed as copies of the geometric part. While for $\sdp$, the remaining parts are similar to the space of stability conditions of quivers representations, see the works of \cite{BSW, BQS, DK16, Ike14, Qiu15, QW}. In most of the previous quiver representation examples, the stability conditions are all of the algebraic type. Yet the quiver representation for $\Db(\pp)$ has a complicated relation, this leads the fact that some of the geometric stability conditions on $\pp$ are not of the algebraic type.  In addition, it seems to the author that the contractibility of the algebraic parts  $\Stab^{\Alg}(\pp)$ is not implied by the results in any of the previous papers. In particular, the paper \cite{QW}, at where the authors prove the contractibility for many interesting examples, does not apply to the case $\Stab^{\Alg}(\pp)$,  since the heart $\langle \cO[2],\cO(1)[1],\cO(2)\rangle$ is not locally finite and has infinitely many algebraic tilts, which are crucial assumptions on the t-structure in \cite{QW}.

\textbf{Open questions:}
It is reasonable for us to believe that $\Stab^\dag(\pp)$ actually contains all the stability conditions that satisfy the support condition. 
\begin{conj} 
We expect the following statement holds:
$\Stab(\pp)=\Std(\pp)$.
\label{conj}
\end{conj}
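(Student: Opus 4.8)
The plan is to prove the sharper connectivity statement that $\Stab(\pp)$ contains no stability condition lying outside the principal component. Since $\Std(\pp)$ is by definition a connected component, it is both open and closed in $\Stab(\pp)$; the conjecture is therefore equivalent to the assertion that $\Stab(\pp)$ is connected, i.e. that every $\sigma=(\cP,Z)$ satisfying the support condition can be joined to a known point of $\Std(\pp)$. Given the decomposition $\Std(\pp)=\sg\cup\Stab^{\Alg}(\pp)$ already in hand, it suffices to show that an arbitrary $\sigma$ is either geometric or algebraic in the senses of Definitions \ref{defn:stabgeo} and \ref{def: theta E}.

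First I would set up the fundamental dichotomy on the skyscraper sheaves. For fixed $\sigma$, ask whether all $\cO_p$, $p\in\pp$, are $\sigma$-semistable of one common phase. If so, then after the $\glt$-action one lands directly in $\sg$, and the geometric locus is connected through the tilt-stability family attached to $\Coh(\pp)$, so $\sigma\in\Std(\pp)$. The substance lies in the opposite case: if some $\cO_p$ fails to be semistable, or the $\cO_p$ acquire distinct phases, I would extract from a Harder--Narasimhan or a Jordan--H\"older factor a $\sigma$-stable object $E$ of small mass and argue, via the support property, that $E$ must be rigid, hence exceptional. The results of \cite{Mac} then promote $E$ to a full exceptional collection and exhibit the heart $\cP(0,1]$ as one of the algebraic hearts, placing $\sigma$ in some $\Theta_\cE$.

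The engine driving the dichotomy is the support-property quadratic form, which on $\Db(\pp)$ one takes to be the Bogomolov discriminant $\Delta=\ch_1^2-2\ch_0\ch_2$ (suitably normalized). I would use negativity of the form on $\Ker Z$ together with the Drezet--Le Potier classification to control the position of $\Ker Z$ in $\pkp$: a point strictly below the Le Potier curve forces the geometric picture, while a point meeting an exceptional character forces the algebraic one, and the support condition is designed precisely to forbid $\Ker Z$ from crossing the dense set of Gieseker-stable characters. Converting these numerical constraints on $\Ker Z$ into a statement about the heart $\cP(0,1]$ is where the classification of stable characters from \cite{DP} does the real work.

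The main obstacle, and the reason this remains a conjecture rather than a theorem, is the absence of a complete classification of the hearts of bounded t-structures on $\Db(\pp)$ that carry a numerical stability condition with support property. The geometric hearts (tilts of $\Coh(\pp)$) are not of finite length, so the finite-length, exceptional-collection machinery does not apply to them directly, and one cannot a priori exclude an exotic heart that is neither a geometric tilt nor generated by an exceptional collection. Closing this gap would require either a direct enumeration of all such hearts---mirroring Okada's argument for $\ppp$ but contending with the far richer exceptional-bundle combinatorics of $\pp$---or a deformation argument showing that every $\sigma$ can be flowed, within $\Stab(\pp)$, into the geometric region while preserving the support property. I expect the second route, namely controlling wall-crossing along a path leaving an arbitrary (possibly exotic) $\sigma$, to be the genuine difficulty.
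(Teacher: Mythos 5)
This statement is Conjecture \ref{conj}, which the paper explicitly lists as an open question and does not prove; there is no proof in the paper to compare against. Your proposal, read carefully, is not a proof either: it is a strategy sketch that ends by correctly naming the obstruction. So the honest verdict is that you have not closed the gap, but you have located it in the right place, and your assessment agrees with the paper's own framing of the problem.

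To be concrete about where the sketch would break down if pushed: the skyscraper dichotomy handles the geometric half cleanly (if all $k(x)$ are $\sigma$-stable of one phase, Definition \ref{defn:stabgeo} and Proposition \ref{thm:geostab} place $\sigma$ in $\sg\subset\Std(\pp)$), but the other half does not follow from producing a single rigid stable object. Even granting that a small-mass stable factor of a destabilized $k(x)$ is exceptional, the results of \cite{Mac} that you invoke (Proposition \ref{prop:thetaE}) require an entire exceptional triple $\cE=\{E_1,E_2,E_3\}$ to be simultaneously $\sigma$-stable with phases satisfying $\phi_1<\phi_2<\phi_3$ and $\phi_1+1<\phi_3$ before $\sigma$ lands in $\Theta_\cE$; one exceptional stable object does not determine the heart $\cP((0,1])$, and nothing currently excludes a heart that is neither a geometric tilt nor generated by an exceptional collection. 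This is exactly the missing classification you point to in your final paragraph, and it is why the paper proves only the internal statement $\Std(\pp)=\sg\cup\Stab^{\Alg}(\pp)$ (Theorem \ref{thm:stabdag}) --- whose proof works entirely inside the known component by showing the union has no boundary --- rather than the global statement $\Stab(\pp)=\Std(\pp)$. Your proposal should be presented as a research plan, not a proof.
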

In addition, as the case of $\ppp$, we wish to understand the global complex structure
of Stab$(\textbf P^2)$. We expect that there is a period map as that of the CY quiver
cases, \cite{BQS, Ike14}, so that we may
have differential forms on Stab$^\dag$(\textbf P$^2$) and the
central charge is neatly computed as integrations. But this seems difficult
to realize because there is some `pure geometric' part on
Stab$^\dag$(\textbf P$^2$). For the algebraic part Stab$^{\Alg}(\pp)$, we also expect that there is
a fundamental domain $R$ on $(\cH)^3\simeq \Theta_\cE$
independent of the triples $\mathcal E$ such that all the $R_\cE$'s form a
disjoint cover of Stab$^{\Alg}(\pp)$.\\

\textbf{Acknowledgments.} The author is grateful to Arend Bayer, Zheng Hua, Yu Qiu and Xiaolei Zhao for helpful conversations. The author is supported by ERC starting grant no. 337039 ``WallXBirGeom''.

\section*{Notations}
The Picard group of \textbf P$^2$ is of rank one with generator $H$ $=$ $[\mathcal O(1)]$, and we will, by abuse of notation, identify the $i$-th Chern character $\ch_i$ with its degree $H^{2-i}\ch_i$. The slope $\mu$ of a non-torsion sheaf $E$ on $\pp$ is defined as $\frac{\ch_1}{\ch_0}$. We denote $\mathrm{K}(\pp)\otimes \mathbb R$ by K$_{\mathbb R}(\pp)$. Consider the real projective space $\pkp$ with homogeneous coordinate $[\ch_0,\ch_1,\ch_2]$, we view the locus $\ch_0=0$ as the line at infinity. The complement forms an affine real plane, which is referred to as the $\cccp$. We call $\pkp$ the projective $\cccp$. For any object $F$ in $\mathrm D^b(\pp)$, we write 
\[\tilde{v}(F):=\big(\ch_0(F),\ch_1(F),\ch_2(F)\big)\]
as the numerical character of $F$, and $v(F)$ the projection of $\tilde{v}(F)$ on the $\cccp$ with locus $(1,s,q)$. 

Let $E$, $F$ be two objects in D$^b(\textbf P^2)$ with characters on the $\cccp$ and $P$
be a point on the projective $\cccp$. For the convenience of the reader, we make the
list of notations and symbols that are commonly used in this
article. Most of them are explicitly defined at other places of the
article.
\begin{longtable}{ p{.20\textwidth}  p{.65\textwidth} } 
\hline
   $\mathcal H_P$ & the right half plane  with $\frac{\ch_1}{\ch_0}>s$, or
$\frac{\ch_1}{\ch_0}=s$ and
      $\frac{\ch_2}{\ch_0}>q$
\\
     $\mathcal H_E$ & $H_{v(E)}$ when $v(E)$ is not at infinity\\

     $L_{EF}$ & the line on
     $\pkp$ across $v(E)$
     and $v(F)$\\
     $L_{EP}$ & the line on $\pkp$ across $v(E)$
     and $P$\\
     $l_{EF}$ ($l_{EP}$) & the line segment $\overline{v(E)v(F)}$ ($\overline{v(E)P}$)on
      the $\cccp$ \\
    $l_{EF}^r$ & the ray along $L_{EF}$ from $v(F)$ to infinity and does not contain $v(E)$\\
    $l_{EF}^+$ & the ray along $L_{EF}$ from $v(E)$ on the $\mathcal H_E$ part\\
    $l_{E+}$ & the ray segment on $L_{E(0,0,1)}$ on the $\mathcal H_E$ part\\
    $l_{E-}$ & the ray segment on $L_{E(0,0,-1)}$ outside the  $\mathcal H_{v(E)}$ part\\
$\mathcal E$ & a triple of ordered exceptional objects
$\{E_1,E_2,E_3\}$\\
TR$_{\mathcal E}$ & the inner points in the triangle bounded by
$l_{E_iE_j}$, for $1\leq i < j\leq 3$.\\
$e^*_i$ & $v^*(E_i)$ as defined in section 1, $*$ can be $+$, $l$,
$r$ or blank\\
MZ$_\mathcal E$ & the inner points of region bounded by
$l_{e_1e^+_1}$, $l_{e^+_1e_2}$,$l_{e_2e^+_3}$, $l_{e^+_3e_3}$ and
$l_{e_3e_1}$ \\ \hline
\caption{List of Notations} 
\label{table:listnot}
\end{longtable}

\section {Geometric stability conditions}
\subsection{Review: Exceptional objects, triples, and Le Potier curve}
Let $\mathcal T$ be a $\mathbb C$-linear triangulated category of
finite type. For convenience, one may always assume that  $\mathcal T$ is
$\Db(\pp)$: the bounded derived category of coherent sheaves
on the projective plane over $\mathbb C$. The following definitions
follow from \cite{AKO,GorRu, Or}.
\begin{defn}
An object $E$ in $\mathcal T$ is called \emph{exceptional} if
\begin{center} $\Hom^i(E,E) = 0,$ \text{ for } $i\neq 0$;
$\Hom^0(E,E)=\mathbb C$.\end{center} An ordered collection of
exceptional objects $\mathcal E = \{E_0,\dots,E_m\}$ is called an
\emph{exceptional collection} if
\begin{center}
$\Hom^{\bullet} (E_i,E_j)=0$, for $i>j$.
\end{center}
\end{defn}
\begin{defn}
Let $\mathcal E$ $=$ $\{ E_0,\dots,E_n\}$ be an exceptional
collection. We call this collection $\mathcal E$ \emph{strong}, if
\[\Hom^q(E_i,E_j) = 0,\] for  all $i$, $j$ and $q\neq 0$.
This collection $\mathcal E$ is called \emph{full}, if $\mathcal E$
generates $\mathcal T$ under homological shifts, cones and direct
summands.
\end{defn}

\def\epm{E_{\left(\frac{p}{2^m}\right)}}

We summarize some of the classification results of the exceptional
bundles on \textbf P$^2$ and make some notations, see \cite{DP, GorRu, LeP}.
 There is a one-to-one correspondence
between the dyadic integers $\frac{p}{2^m}$ and exceptional bundles $\epm$.
Let the Chern  character of the exceptional bundle corresponding to
$\frac{p}{2^m}$ be \[\tilde{v}(\epm):=
\left(\ch_0(\epm),\ch_1(\epm),\ch_2(\epm)\right),\]
the characters are inductively given by the formulas:
\begin{itemize}
\item $\tilde{v}(E_{(n)})$ $=$ $\left(1,n,\frac{n^2}{2}\right)$, for $n\in \mathbb Z$.
\item When $q>0$ and $p\equiv 3 $(mod $4$), the character is given
by
\[\tilde{v}\left(\epm \right) = 3\ch_0\left(E_{\left(\frac{p+1}{2^m}\right)}\right)\tilde{v}\left(E_{\left(\frac{p-1}{2^m}\right)}\right)-\tilde{v}\left(E_{\left(\frac{p-3}{2^m}\right)}\right).\]
\item When $q>0$ and $p\equiv 1 $(mod $4$), the character is given
by
\[\tilde{v}\left(\epm \right) = 3\ch_0\left(E_{\left(\frac{p-1}{2^m}\right)}\right)\tilde{v}\left(E_{\left(\frac{p+1}{2^m}\right)}\right)-\tilde{v}\left(E_{\left(\frac{p+3}{2^m}\right)}\right).\]
\end{itemize}
\begin{eg} Here are some first observations from the definition.
\begin{enumerate}
\item When $k\in \Z$, $\tilde{v}(E_{(k)})$ is the character for the line bundle $E_{(k)}=\cO_{\pp}(k)$.
 \item $\tilde{v}\left(E_{\left(\frac{3}{2}\right)}\right)$ is the character for the tangent
bundle $E_{\left(\frac{3}{2}\right)}=\cT_{\pp}$. \item The exceptional bundle
$E_{\left(\frac{p}{2^m}+1\right)}$ associates $\frac{p}{2^m}+1$ is
$E_{\left(\frac{p}{2^m}\right)}\otimes \mathcal O_{\pp}(1)$.
\end{enumerate}
\end{eg}

\textbf{Le Potier curve:} 
Define $v\left(\epm\right)$ $=$
$\tilde{v}\left(\epm\right)/\ch_0\left(\epm\right)$. 
We use Chern characters $[\ch_0,\ch_1,\ch_2]$ for the coordinate of
K$_{\mathbb R}(\textbf P^2)$. Consider the real projective space $\pkp$ with homogeneous coordinate $[\ch_0,\ch_1,\ch_2]$. We view the locus $\ch_0=0$ as the line at infinity, and call $\pkp$ the projective $\cccp$. The complement of the line at infinity forms an affine real plane, which is referred to as the $\cccp$. We  will define the Le Potier curve on this $\cccp$.

 Let
$e\left(\frac{p}{2^m}\right)$ be the point on the $\cccp$ with coordinate
$v\left(\epm\right)$.  We associate three more points
$e^+\left(\frac{p}{2^m}\right)$, $e^l\left(\frac{p}{2^m}\right)$ and $e^r\left(\frac{p}{2^m}\right)$
to $\epm$ on the
$\cccp$. The coordinate of
$e^+\left(\frac{p}{2^m}\right)$ is given as:
\[e^+\left(\frac{p}{2^m}\right)\; := \; e\left(\frac{p}{2^m}\right)\; -
\; \left(0,0,\frac{1}{\left(\ch_0\left(\epm\right)\right)^2}\right). \] For any real number $a$,
let $\Delta_{a}$ be the parabola:
\[\frac{1}{2}\left(\frac{\ch_1}{\ch_0}\right)^2-\frac{\ch_2}{\ch_0}\; = \; a\]
on the $\cccp$. Let $\Delta_{>a}$($\Delta_{<a}$) be the region 
$\left\{\left(1,\frac{\ch_1}{\ch_0},\frac{\ch_2}{\ch_0}\right)\middle|\frac{1}{2}
\left(\frac{\ch_1}{\ch_0}\right)^2- \frac{\ch_2}{\ch_0}>a(<a)\right\}$. The point
$e^l\left(\frac{p}{2^m}\right)$ is defined to be the intersection of
$\Delta_{\frac{1}{2}}$ and the segment
$l_{e^+\left(\frac{p}{2^m}\right)e\left(\frac{p-1}{2^m}\right)}$; $e^r\left(\frac{p}{2^m}\right)$ is
defined to be the intersection of $\Delta_{\frac{1}{2}}$ and the
segment $l_{e^+\left(\frac{p}{2^m}\right)e\left(\frac{p+1}{2^m}\right)}$.

\begin{defn}[Le Potier Curve]
In the $\cccp$, consider the open region below all the line segments $l_{e^+\left(\frac{p}{2^m}\right)e^l\left(\frac{p}{2^m}\right)}$,
$l_{e^r\left(\frac{p}{2^m}\right)e^+\left(\frac{p}{2^m}\right)}$ and the curve $\Delta_{\frac{1}{2}}$. The boundary of this open region is a fractal curve in the region between $\Delta_{\frac{1}{2}}$ and $\Delta_{1}$ consisting of line segments $l_{e^+\left(\frac{p}{2^m}\right)e^l\left(\frac{p}{2^m}\right)}$,
$l_{e^r\left(\frac{p}{2^m}\right)e^+\left(\frac{p}{2^m}\right)}$ for all dyadic numbers
$\frac{p}{2^m}$ and fractal pieces of points on $\Delta_{\frac{1}{2}}$. We call this curve the \emph{Le
Potier curve} on the $\cccp$,
and denote it by $C_{LP}$. We call the cone in $\KK_{\mathbb
R}(\pp)$ spanned by the origin and $C_{LP}$ as the \emph{Le Potier
cone}. 
\label{lpcurve}
\end{defn}
We also make a notation for the following open region above
$C_{LP}$.
\[\Geo_{LP}:= \{(1,a,b)\; |\; (1,a,b) \text{ is above }C_{LP}
\text{ and  not on any segment } l_{ee^+}\}.\] 

\begin{theorem}[Drezet, Le Potier]
There exists a Gieseker semistable coherent sheaf with character
$(\ch_0(>0),\ch_1,\ch_2)\in \mathrm K(\pp)$ if and only if either:
\begin{enumerate}
\item it is proportional to an exceptional character $e\left(\frac{p}{2^m}\right)$;
\item The point $\left(1,\frac{\ch_1}{\ch_0},\frac{\ch_2}{\ch_0}\right)$ is on or below $C_{LP}$ in the $\cccp$.
\end{enumerate}
\label{thm:lp}
\end{theorem}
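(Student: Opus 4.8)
The plan is to prove the two implications separately, the engine in both cases being the Riemann--Roch pairing on $\pp$ together with the classification of exceptional bundles recalled above. Write $\mu(E)=\ch_1(E)/\ch_0(E)$ and $\Delta(E)=\tfrac12\mu(E)^2-\ch_2(E)/\ch_0(E)$, so that a character lies below $C_{LP}$ precisely when $\Delta(E)$ is large enough for its slope. Riemann--Roch gives
\[\chi(F,E)=\ch_0(F)\ch_0(E)\bigl(P(\mu(E)-\mu(F))-\Delta(F)-\Delta(E)\bigr),\qquad P(x)=\tfrac12 x^2+\tfrac32 x+1,\]
and the first thing I would record is that, after dividing by $\ch_0(E)$, the $\tfrac12\mu(E)^2$ terms cancel, so for a fixed bundle $F$ the locus $\{\chi(F,\cdot)=0\}$ is a genuine straight line in the $\cccp$, and likewise $\{\chi(\cdot,F)=0\}$. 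A direct check shows that for an exceptional bundle $E_\alpha=\epm$ the line $\{\chi(E_\alpha,\cdot)=0\}$ passes through $e^+(\tfrac{p}{2^m})$ and through the left neighbour $e(\tfrac{p-1}{2^m})$, so it carries the segment $l_{e^+e^l}$, while $\{\chi(\cdot,E_\alpha)=0\}$ passes through $e^+(\tfrac{p}{2^m})$ and the right neighbour, carrying $l_{e^re^+}$. Thus each exceptional bundle contributes two affine half-planes whose boundaries are exactly the linear pieces of $C_{LP}$.

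For necessity let $E$ be Gieseker semistable and not proportional to any $e(\tfrac{p}{2^m})$. If $E_\alpha$ is exceptional with $\mu(E)<\alpha<\mu(E)+3$, then semistability forbids a nonzero map $E_\alpha\to E$, so $\Hom(E_\alpha,E)=0$, while Serre duality gives $\Ext^2(E_\alpha,E)\cong\Hom(E,E_\alpha(-3))^\vee=0$ because $\mu(E_\alpha(-3))=\alpha-3<\mu(E)$; hence $\chi(E_\alpha,E)=-\ext^1(E_\alpha,E)\le 0$. Symmetrically, $\chi(E,E_\alpha)\le 0$ for every exceptional $E_\alpha$ with $\mu(E)-3<\alpha<\mu(E)$. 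By the linearity above, each inequality places $v(E)$ on or below one of the lines through the various $e^+$, so $v(E)$ lies in the intersection of all these half-planes. I would then identify the boundary of this intersection with $C_{LP}$: near each exceptional slope the active constraints are the two lines just described, producing the segments $l_{e^+e^l}$ and $l_{e^re^+}$, while at the remaining (dense) slopes the binding constraint is the limit of the lines coming from exceptional bundles of growing rank, whose accumulation is the arc of $\Delta_{\frac{1}{2}}$. The exceptional characters themselves are the isolated points $e(\tfrac{p}{2^m})$ lying above the curve, which is alternative (1).

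For sufficiency I would construct a semistable sheaf for every character on or below $C_{LP}$. When $\Delta(E)\gg 0$ existence is classical and can be obtained from generic extensions of line bundles (or from nonemptiness of the moduli space for large discriminant). To reach the boundary I would induct along the recursion that defines the exceptional bundles: a character lying on a segment $l_{e^re^+}$ or $l_{e^+e^l}$ is a positive combination of the characters of the two exceptional bundles bounding that segment, and the corresponding sheaf is built as an iterated extension of those bundles, equivalently as the cohomology of a monad with exceptional terms in the spirit of Beilinson. Semistability of the output is then checked by testing every candidate destabilising subobject against the same inequalities $\chi(E_\alpha,\cdot)\le 0$, which closes the induction; characters strictly below the curve are obtained by adding further extensions to raise $\Delta$ while keeping $(\mu,\ch_2/\ch_0)$ on target.

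The main obstacle is the sufficiency direction on and just below $C_{LP}$. One must realise slopes in the dense, fractal set of exceptional slopes and prove stability there, which requires simultaneous control of infinitely many exceptional bundles and of the arithmetic relating their slopes to their ranks --- it is exactly this arithmetic that forces the $\Delta_{\frac{1}{2}}$-arcs, rather than some larger value of $\Delta$, to be the true boundary. The necessity direction, by contrast, is essentially the single Riemann--Roch identity together with the slope and Serre-duality vanishing, and should be routine once the linearity of $\{\chi(E_\alpha,\cdot)=0\}$ and its identification with the segments of $C_{LP}$ are in place.
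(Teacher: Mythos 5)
You should note at the outset that the paper does not prove this statement: it is imported as a classical result of Drezet and Le Potier (cited from \cite{DP}) and used as a black box, so there is no internal argument to compare yours against. Judged on its own terms, your proposal reconstructs the standard published strategy. The necessity half is essentially right: Riemann--Roch in the form $\chi(F,E)=\ch_0(F)\ch_0(E)\bigl(P(\mu(E)-\mu(F))-\Delta(F)-\Delta(E)\bigr)$, the vanishings $\Hom(E_\alpha,E)=0$ and $\Ext^2(E_\alpha,E)\cong\Hom(E,E_\alpha(-3))^\vee=0$ for exceptional $E_\alpha$ with $\mu(E)<\alpha<\mu(E)+3$ (and the symmetric statement), and the linearity of $\chi(E_\alpha,\cdot)=0$ in the $\cccp$ are exactly the ingredients of the original argument. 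What still has to be written down there is the combinatorial identification of the envelope of all these half-planes with $C_{LP}$: that near each dyadic slope $\alpha$ the two lines through $e^+(\alpha)$ are the binding constraints on an interval whose width is controlled by $\ch_0(E_\alpha)$, and that on the complementary (fractal) set of slopes the constraints accumulate onto $\Delta_{\frac{1}{2}}$ via $\Delta(E_\alpha)=\frac{1}{2}\left(1-\ch_0(E_\alpha)^{-2}\right)$. This is true but not automatic, and it is where the ordering of exceptional slopes enters.

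The genuine gap is in the sufficiency direction, as you acknowledge. Iterated extensions of the two exceptional bundles bounding a segment do not by themselves yield a semistable sheaf, and checking semistability of the output ``by testing candidate destabilising subobjects against $\chi(E_\alpha,\cdot)\le 0$'' is circular as stated: those inequalities are necessary conditions satisfied by semistable sheaves, not a criterion certifying semistability of a particular sheaf you have built. The actual proof requires an auxiliary moduli problem (prioritary sheaves), irreducibility of that stack, and a deformation/dimension count showing that the \emph{generic} such sheaf with the given invariants is semistable exactly when the inequality holds; alternatively an induction on the discriminant using general elementary modifications, together with a proof that generic elementary modifications preserve stability. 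None of this is in your sketch, so the ``if'' direction is a statement of intent rather than a proof. For the purposes of this paper, citing \cite{DP} as the author does is the right move; a self-contained proof would have to supply precisely the sufficiency machinery you have flagged as the obstacle.
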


\begin{center}

\begin{tikzpicture}[domain=1:5]

\tikzset{%
    add/.style args={#1 and #2}{
        to path={%
 ($(\tikztostart)!-#1!(\tikztotarget)$)--($(\tikztotarget)!-#2!(\tikztostart)$)%
  \tikztonodes},add/.default={.2 and .2}}
}

\newcommand\XA{0.02}

\draw [name path =C0, opacity=0.1](-3,4.5) parabola bend (0,0) (3,4.5)
 node[right, opacity =0.5] {$\db_0$};

\draw [name path = C1, opacity=0.5](-3,4) parabola bend (0,-0.5) (3,4)
 node[right] {$\db_{\frac{1}{2}}$};

\draw [name path =C2, opacity=0.5](-3,3.5) parabola bend (0,-1) (3,3.5)
 node[right] {$\db_1$};


\coordinate (B3) at (-3,4.5);
\coordinate (B2) at (-2,2);

\coordinate (B1) at (-1,0.5);
\coordinate (A0) at (0,0);
\coordinate (A1) at (1,0.5);
\coordinate (A2) at (2,2);
\coordinate (A3) at (3,4.5);

\coordinate (E0) at (0,-1);
\draw (E0) node [below right] {$e^+(0)$};

\coordinate (E1) at (1,-0.5);
\draw (E1) node [below right] {$e^+(1)$};

\coordinate (E2) at (2,1);
\draw (E2) node [below right] {$e^+(2)$};

\coordinate (E3) at (3,3.5);
\draw (E3) node [below right] {$e^+(3)$};

\coordinate (F1) at (-1,-0.5);
\draw (F1) node [below left] {$e^+(-1)$};

\coordinate (F2) at (-2,1);
\draw (F2) node [below left] {$e^+(-2)$};

\coordinate (F3) at (-3,3.5);
\draw (F3) node [below left] {$e^+(-3)$};

\draw [name path =L0,opacity =\XA] (E0) -- (B2);
\draw [name intersections={of=C1 and L0},  thick] (E0) -- (intersection-1);
\draw [name path =R13,opacity =\XA] (F3) -- (B2);
\draw [name intersections={of=C1 and R13},  thick] (F3) -- (intersection-1);

\draw [name path =L1,opacity =\XA] (E1) -- (B1);
\draw [name intersections={of=C1 and L1},  thick] (E1) -- (intersection-1);
\draw [name path =R12,opacity =\XA] (F2) -- (B1);
\draw [name intersections={of=C1 and R12},  thick] (F2) -- (intersection-1);

\draw [name path =L2,opacity =\XA] (E2) -- (A0);
\draw [name intersections={of=C1 and L2},  thick] (E2) -- (intersection-1);
\draw [name path =R11,opacity =\XA] (F1) -- (A0);
\draw [name intersections={of=C1 and R11},  thick] (F1) -- (intersection-1);

\draw [name path =L3,opacity =\XA] (E3) -- (A1);
\draw [name intersections={of=C1 and L3},  thick] (E3) -- (intersection-1);
\draw [name path =R0,opacity =\XA] (E0) -- (A1);
\draw [name intersections={of=C1 and R0},  thick] (E0) -- (intersection-1);

\draw [name path =L12,opacity =\XA] (F2) -- (B3);
\draw [name intersections={of=C1 and L12},  thick] (F2) -- (intersection-1);
\draw [name path =L11,opacity =\XA] (F1) -- (B2);
\draw [name intersections={of=C1 and L11},  thick] (F1) -- (intersection-1);

\draw [name path =R1,opacity =\XA] (E1) -- (A2);
\draw [name intersections={of=C1 and R1},  thick] (E1) -- (intersection-1);
\draw [name path =R2,opacity =\XA] (E2) -- (A3);
\draw [name intersections={of=C1 and R2},  thick] (E2) -- (intersection-1);

\coordinate (S3) at (-2.5,2.5);
\draw (S3) node [below left] {$e^+(-\frac{5}{2})$};

\coordinate (S2) at (-1.5,0.5);

\coordinate (S1) at (-.5,-0.5);

\coordinate (T1) at (.5,-0.5);

\coordinate (T2) at (1.5,0.5);

\coordinate (T3) at (2.5,2.5);
\draw (T3) node [below right] {$e^+(\frac{5}{2})$};

\draw [name path =RS3,opacity =\XA] (S3) -- (B3);
\draw [name intersections={of=C1 and RS3},  thick] (S3) -- (intersection-1);
\draw [name path =LS3,opacity =\XA] (S3) -- (A0);
\draw [name intersections={of=C1 and LS3},  thick] (S3) -- (intersection-1);

\draw [name path =RS2,opacity =\XA] (S2) -- (B2);
\draw [name intersections={of=C1 and RS2},  thick] (S2) -- (intersection-1);
\draw [name path =LS2,opacity =\XA] (S2) -- (A1);
\draw [name intersections={of=C1 and LS2},  thick] (S2) -- (intersection-1);

\draw [name path =RS1,opacity =\XA] (S1) -- (B3);
\draw [name intersections={of=C1 and RS1},  thick] (S1) -- (intersection-1);
\draw [name path =LS1,opacity =\XA] (S1) -- (A2);
\draw [name intersections={of=C1 and LS1},  thick] (S1) -- (intersection-1);

\draw [name path =RT1,opacity =\XA] (T1) -- (B2);
\draw [name intersections={of=C1 and RT1},  thick] (T1) -- (intersection-1);
\draw [name path =LT1,opacity =\XA] (T1) -- (A3);
\draw [name intersections={of=C1 and LT1},  thick] (T1) -- (intersection-1);

\draw [name path =RT2,opacity =\XA] (T2) -- (B1);
\draw [name intersections={of=C1 and RT2},  thick] (T2) -- (intersection-1);
\draw [name path =LT2,opacity =\XA] (T2) -- (A2);
\draw [name intersections={of=C1 and LT2},  thick] (T2) -- (intersection-1);

\draw [name path =RT3,opacity =\XA] (T3) -- (A0);
\draw [name intersections={of=C1 and RT3},  thick] (T3) -- (intersection-1);
\draw [name path =LT3,opacity =\XA] (T3) -- (A3);
\draw [name intersections={of=C1 and LT3},  thick] (T3) -- (intersection-1);

\draw (0,0) node{$\bullet$}--(0,-1)node{$\bullet$}--(1,0.5)node{$\bullet$}--(2,1)node{$\bullet$}--(2,2)node{$\bullet$}--(0,0);

\draw [->] (0.5,3.5) node[above]{$\cE=\langle \cO,\cO(1),\cO(2) \rangle$}->(1.5,1.1);
\draw (0.5,4.2) node{$\MZ_{\cE}$ for };


\draw[->,opacity =0.3] (-4,0) -- (4,0) node[above right] {$\frac{\ch_1}{\ch_0}$};

\draw[->,opacity=0.3] (0,-2)-- (0,0) node [above right] {O} --  (0,5) node[right] {$\frac{\ch_2}{\ch_0}$};

\end{tikzpicture}\\
Figure: The Le Potier curve  $C_{LP}$.
\end{center}

\begin{rem}
In this article, when we talk about the
$\cccp$, we always assume
the $\frac{\ch_1}{\ch_0}$-axis is horizontal and the
$\frac{\ch_2}{\ch_0}$-axis is vertical. The phrase `above' is
translated as `$\frac{\ch_2}{\ch_0}$ coordinates is greater than'.
Other words such as: below, right, left can be translated in a similar
way.
\end{rem}

The full strong exceptional collections on $\Db(\pp)$ have
been classified by Gorodentsev and Rudakov \cite{GorRu}. In
particular, up to a cohomological shift, the collection consists of
exceptional bundles on $\textbf P^2$. In terms of dyadic numbers,
their labels are of three cases:
\begin{equation*}
\left\{\frac{p-1}{2^m},\frac{p}{2^m},\frac{p+1}{2^m}\right\};\;
\left\{\frac{p}{2^m},\frac{p+1}{2^m},\frac{p-1}{2^m}+3\right\};\;
\left\{\frac{p+1}{2^m}-3,\frac{p-1}{2^m},\frac{p}{2^m}\right\}.
\tag{$\clubsuit$} \label{eq:dyadictriples}
\end{equation*}

\subsection{Review:Geometric stability conditions}

We briefly recall the definition of stability condition on a triangulated category from \cite{Bri07}. Let $\cT$ be the bounded derived category of coherent sheaves on a smooth variety. A \emph{pre-stability condition} $\sigma=(\cP,Z)$ on $\cT$ consists of a central charge $Z:$ K$_0(\cT) \rightarrow \C$, which is an $\R$-linear homomorphism, and a slicing $\cP: \R\rightarrow$ (full additive subcategories of $\cT$), satisfying the following axioms:
\begin{enumerate}
\item For any object $E$ in $\cP(\phi)$,  we have $Z(E)=m(E)\exp(i\pi\phi)$ for some $m(E)\in\R_{>0}$;
\item $\cP(\phi+1)=\cP(\phi)[1]$;
\item when $\phi_1>\phi_2$ and $A_i\in$ obj($\cP(\phi_i)$), we have $\Hom_{\cT}(A_1,A_2)=0$;
\item (Harder-Narasimhan filtration) For any object $E$ in $\cT$, there is a sequence of real numbers $\phi_1>\cdots >\phi_n$ and a collection of vanishing triangles $E_{j-1}\rightarrow E_j\rightarrow A_j$ with $E_0=0$, $E_n=E$ and $A_j\in\mathrm{obj} \cP(\phi_j)$ for all $j$.
\end{enumerate}

A pre-stability condition is called a \emph{stability condition} if it satisfies the \emph{support condition}:
there exists a quadratic form $Q$ on the vector space K$_{\R}(\cT)$ such that
\begin{itemize}
\item For any $E\in\mathrm{obj}\cP(\phi)$, $Q(E)\geq 0$;
\item $Q|_{\Ker Z}$ is negative definite.
\end{itemize}

For the rest part of this section, we will follow the line of \cite{Bri08} and \cite{BM}
and conclude that the space of geometric stability condition on \textbf P$^2$ is
a $\glt$ fiber space over Geo$_{LP}$.
\begin{defn}
A stability condition $\sigma$ on $\Db(\pp)$ is called
\emph{geometric} if all
skyscraper sheaves $k(x)$ are $\sigma$-stable with the same phase.
We denote the subset of all geometric stability condition by
$\Stab^{\Geo}(\pp)$. \label{defn:stabgeo}
\end{defn}

Let $s$ be a real number, a torsion pair of coherent sheaves on
$\textbf P^2$ is given by:
\begin{itemize}
\item[]Coh$_{\leq s}$: the subcategory of Coh(\textbf P$^2$) generated by
slope semistable sheaves of slope $\leq s$ by extension.
\item[] Coh$_{>
s}$: the subcategory of Coh(\textbf P$^2$) generated by slope semistable
sheaves of slope $> s$ and torsion sheaves.

\item[] Coh$_{\# s}$ $:=$ $\langle\Coh_{\leq s}[1]$, Coh$_{>
s}\rangle$
\end{itemize}
\begin{defn}
Given $(s,q)$ $\in$ $\Geo_{LP}$, the 
$\sigma_{s,q}=(Z_{s,q},\cP_{s,q})$ on $\Db(\pp)$ is defined by the \emph{central charge}
$Z_{s,q}$ on the heart $\cP_{s,q}\left((0,1]\right)=\Coh_{\# s}$.
\[ Z_{s,q}(E):=(-\ch_2(E)+q\cdot \ch_0(E))+ i(\ch_1(E)-s\cdot \ch_0(E)).\]
Let the \emph{phase function} $\phi_{s,q}$ be defined for objects in
$\Coh_{\#s}$:
 $\phi_{s,q}(E) := (1/\pi) \arg(Z_{s,q}(E))$. 
 For $\phi\in(0,1]$, each slice $\cP(\phi)$ is formed by the semistable objects (with respect to $Z_{s,q}$) with phase $\phi_{s,q}=\phi$.
\label{defn:geostabsq}
\end{defn}
\begin{rem}
This definition of  the central charge $Z_{s,q}$  is slightly different from the
usual case as that in the \cite{ABCH}. The imaginary parts are
defined in the same way, but the real part is different from the
usual case by a scalar times the imaginary part. We would like to
use the version here because its kernel is
clear. In addition, if we write $P$ for the point $(1,s,q)$, then the phase (times
$\pi$) of an object $E$ in $\Coh_{\#s}$ is the angle spanned by the
rays $l^+_{PE}$ and $l_{P-}$ (for definition, see Table
\ref{table:listnot}) at $P$ on the $\mathcal H_P$ half plane.
\label{rem:anglephase}
\end{rem}

\begin{prop}
For any $(s,q)$ $\in$ $\Geo_{LP}$, $\sigma_{s,q}=(Z_{s,q},\cP_{s,q})$ is a geometric
stability condition. \label{prop:sigmasqisgeostab}
\end{prop}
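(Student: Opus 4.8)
The plan is to verify in turn the three things Definition \ref{defn:geostabsq} and the support condition require: that $(Z_{s,q},\cP_{s,q})$ is a pre-stability condition, that it satisfies the support condition, and that it is geometric. First, because Mumford slope semistability supplies Harder--Narasimhan filtrations, $(\Coh_{\leq s},\Coh_{>s})$ is a torsion pair on $\Coh(\pp)$, so its Happel--Reiten--Smal\o{} tilt $\Coh_{\#s}$ is the heart of a bounded t-structure and $\cP_{s,q}((0,1])=\Coh_{\#s}$ is well defined. The essential point is then that $Z_{s,q}$ is a stability function on this heart, i.e. $Z_{s,q}(E)\in\mathbb{H}\cup\R_{<0}$ for every nonzero $E$. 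Since $\mathrm{Im}\,Z_{s,q}(E)=\ch_1(E)-s\,\ch_0(E)$, the definition of the torsion pair gives $\mathrm{Im}\,Z_{s,q}\geq 0$ on $\Coh_{\#s}$ at once: objects of $\Coh_{>s}$ contribute $\ch_1-s\,\ch_0\geq 0$, and shifts $G[1]$ with $G\in\Coh_{\leq s}$ contribute $-(\ch_1-s\,\ch_0)\geq 0$.

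The crux is the boundary case $\mathrm{Im}\,Z_{s,q}(E)=0$, where I must show $\mathrm{Re}\,Z_{s,q}(E)<0$. Using the cohomology sequence $0\to H^{-1}(E)[1]\to E\to H^0(E)\to 0$ and that both $\mathrm{Im}\,Z_{s,q}(H^0(E))\geq 0$ and $\mathrm{Im}\,Z_{s,q}(H^{-1}(E)[1])\geq 0$, vanishing of the sum forces $H^0(E)$ to be a zero-dimensional sheaf and $H^{-1}(E)$ to be slope-$s$ semistable. Then
\[
\mathrm{Re}\,Z_{s,q}(E)=-\ch_2(H^0(E))+\big(\ch_2(H^{-1}(E))-q\,\ch_0(H^{-1}(E))\big),
\]
whose first term is $\leq 0$. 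For the second I invoke Theorem \ref{thm:lp}: the slope-$s$ semistable sheaf $H^{-1}(E)$ either has character proportional to an exceptional $e\!\left(\tfrac{p}{2^m}\right)$, in which case its projection to the $\cccp$ is the top endpoint of the excluded segment $l_{e e^+}$, or its projection lies on or below $C_{LP}$. Since $(s,q)\in\Geo_{LP}$ is strictly above $C_{LP}$ and off every segment $l_{e e^+}$, in both cases $q>\ch_2(H^{-1}(E))/\ch_0(H^{-1}(E))$, so the second term is negative and $\mathrm{Re}\,Z_{s,q}(E)<0$ unless $E=0$. This is exactly where the hypothesis $(s,q)\in\Geo_{LP}$ is consumed. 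Existence of Harder--Narasimhan filtrations then follows from Noetherianness of $\Coh_{\#s}$ by the standard argument (as in \cite{Bri08,Mac}), so $\sigma_{s,q}$ is a pre-stability condition.

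Next I address the support condition, which I expect to be the main obstacle. I seek a quadratic form $Q$ on $\KK_{\R}(\pp)\cong\R^3$ with $Q|_{\Ker Z_{s,q}}$ negative definite and $Q\geq 0$ on all $\sigma_{s,q}$-semistable objects. Since $\Ker Z_{s,q}$ is the line spanned by $(1,s,q)$, the Bogomolov discriminant $Q=\ch_1^2-2\,\ch_0\ch_2$ satisfies $Q(1,s,q)=s^2-2q$, negative precisely when $(s,q)$ lies above the parabola $\Delta_0$; there the Bogomolov--Gieseker inequality for tilt-semistable objects gives $Q\geq 0$ and finishes the argument. The genuine difficulty is the part of $\Geo_{LP}$ lying below $\Delta_0$ but above $C_{LP}$ (the ``pure geometric'' region), where the discriminant is no longer negative on $\Ker Z_{s,q}$. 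There I would replace it by a form adapted to the local shape of $C_{LP}$ near $(s,q)$ --- a $\Delta_{\frac12}$-type arc away from the exceptional points, a form built from the relevant exceptional bundle near a segment $l_{ee^+}$ --- exploiting that $\Geo_{LP}$ excludes an open neighborhood of every semistable/exceptional character, so that the projectivized kernel point can be separated from all semistable characters by a conic. Verifying $Q\geq 0$ against \emph{all} semistable objects, not merely sheaves, via Bogomolov--Gieseker together with this separation is the technical heart of the step.

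Finally, to see $\sigma_{s,q}$ is geometric I compute $Z_{s,q}(k(x))=-1$, so every skyscraper has phase $1$, the maximal phase in $\Coh_{\#s}$. Given a subobject $A\hookrightarrow k(x)$ in the heart of phase $1$, the cohomology long exact sequence, using $H^{-1}(k(x))=0$ and that a zero-dimensional torsion sheaf has no nonzero subsheaf in $\Coh_{\leq s}$, shows $A$ is a zero-dimensional subsheaf of the length-one sheaf $k(x)$, hence $A=0$ or $A=k(x)$. Thus each $k(x)$ is $\sigma_{s,q}$-stable, all of the same phase $1$, so $\sigma_{s,q}\in\Stab^{\Geo}(\pp)$, completing the proof.
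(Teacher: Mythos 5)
Your reduction of the stability-function property to the case $\mathrm{Im}\,Z_{s,q}(E)=0$ and the deduction of $\mathrm{Re}\,Z_{s,q}(E)<0$ from Theorem \ref{thm:lp} together with the definition of $\Geo_{LP}$ is correct, as is the verification that skyscraper sheaves are stable of phase $1$. The problem is that the support condition --- which you yourself flag as ``the main obstacle'' and ``the technical heart of the step'' --- is not actually proved. Above the parabola $\Delta_0$ your appeal to the Bogomolov discriminant works, but for $(s,q)$ in the region between $C_{LP}$ and $\Delta_0$ you only gesture at ``a form adapted to the local shape of $C_{LP}$'' without constructing it or verifying nonnegativity on semistable objects; that construction is genuinely nontrivial (near a segment $l_{ee^+}$ the separating conic must be degenerate or built from $\chi(E,-)$-type pairings, and one must still check it against all $\sigma_{s,q}$-semistable objects, not just sheaves). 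The paper avoids this entirely: it works with the equivalent norm formulation $C_{s,q}|Z_{s,q}(E)|>|\tilde v(E)|$, uses Proposition \ref{prop:epintersectsgeolp} to show that the character of any $\sigma_{s,q}$-stable object lies outside the cone over a small disc $U_{s,q}\subset\Geo_{LP}$ around $(1,s,q)$, and then concludes by the elementary Lemma \ref{lemma:conetosupport}. You should either adopt that route or carry out the quadratic-form construction in full; as written the step is a placeholder.

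A secondary gap: you assert that the Harder--Narasimhan property ``follows from Noetherianness of $\Coh_{\#s}$ by the standard argument.'' Noetherianity alone does not yield the chain conditions of \cite{Bri07} Proposition 2.4 when the image of $\mathrm{Im}\,Z_{s,q}$ is not discrete (e.g.\ for irrational $s$), which is exactly why the paper proves Lemma \ref{lemma:onlyfininrect} --- the finiteness of semistable characters in horizontal strips, itself a consequence of the Drezet--Le Potier classification --- and then runs the ascending/descending chain argument on cohomology sheaves. Some additional input of this kind is needed; a bare citation of Noetherianity does not close the step.
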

For the proof, readers are referred to the arguments in \cite{Bri08} and \cite{BM} Corollary 4.6, which also work
well in the $\pp$ case. Up to the $\glt$-action, geometric
stability conditions can only be of the form given in Proposition
\ref{prop:sigmasqisgeostab}.

\begin{nota}
Given a point $P=(1,s,q)$ in $\Geo_{LP}$, we will also write $\sigma_P$, $\phi_P$, $\Coh_{P}(\pp)$ and $Z_P$ for the stability condition $\sigma_{s,q}$, the phase function $\phi_{s,q}$, the tilt heart $\Coh_{\#s}(\pp)$ and the central charge $Z_{s,q}$ respectively. 
\label{rem:sigmap}
\end{nota}

\begin{prop}[\cite{Bri08} Proposition 10.3, \cite{BM} Section 3] Let $\sigma=(Z,\cP)$ be a
geometric stability condition with all skyscraper sheaves $k(x)$ in
$\mathcal P(1)$. Then the heart $\mathcal P((0,1])$ is $\Coh_{\#s}$
for some real number $s$.  The central charge $Z$ can be written in
the form of
\[-\ch_2+a\cdot \ch_1+b\cdot \ch_0.\] The complex numbers $a$ and $b$
satisfies the following conditions:
\begin{itemize}
\item $\Im a$ $>$ $0$, $\frac{\Im  b}{\Im a}$ $=$ $s$;
\item $(\frac{\Im b}{\Im a}$, $\frac{\Re a\Im b}{\Im a}$+$\Re b)$ is in $\Geo_{LP}$.
\end{itemize}
\label{thm:geostab}
\end{prop}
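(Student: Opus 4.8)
The plan is to adapt Bridgeland's analysis of geometric stability conditions on a surface (\cite{Bri08,BM}) to $\pp$ and to close the argument with the Drezet--Le Potier classification (Theorem~\ref{thm:lp}). Write $\cA:=\cP((0,1])$ for the heart, and recall that $k(x)\in\cP(1)$ is $\sigma$-stable of the \emph{maximal} phase occurring in $\cA$. The first and principal step is to show $\cA$ is a tilt of $\Coh(\pp)$, i.e.\ that every $A\in\cA$ has standard cohomology sheaves $\mathcal{H}^i(A)$ concentrated in degrees $i\in\{-1,0\}$. For the upper bound, if $\mathcal{H}^i(A)\neq0$ for some $i>0$, then choosing the top such $i$ and a surjection $\mathcal{H}^i(A)\twoheadrightarrow k(x)$ onto a skyscraper at a point of its support yields a nonzero morphism $A\to k(x)[-i]$; but $k(x)[-i]\in\cP(1-i)$ has phase $\le0$, while every Harder--Narasimhan factor of $A$ has phase $>0$, so axiom~(3) forces $\Hom(A,k(x)[-i])=0$, a contradiction. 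The lower bound $\mathcal{H}^i(A)=0$ for $i<-1$ is the more delicate half; I would obtain it by the Serre-dual argument, producing nonzero morphisms from shifts of $k(x)$ \emph{into} $A$ and using $\Hom(\cA,\cA[k])=0$ for $k<0$. This identifies $\cA$ as the tilt of $\Coh(\pp)$ at the torsion pair $(\mathcal{T},\mathcal{F})=(\cA\cap\Coh,(\cA\cap\Coh[1])[-1])$.

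Next I would pin down $Z$ and the torsion pair. As an $\R$-linear map on the rank-three group with coordinates $(\ch_0,\ch_1,\ch_2)$, we have $Z=c\,\ch_2+a\,\ch_1+b\,\ch_0$ for complex $a,b,c$; since $k(x)$ has class $(0,0,1)$ and $Z(k(x))\in\R_{<0}$, we get $c=Z(0,0,1)\in\R_{<0}$, and after the harmless rescaling of $Z$ by a positive real (which leaves the slicing unchanged) we may take $c=-1$, so $Z=-\ch_2+a\,\ch_1+b\,\ch_0$. To see $\Im a>0$, test $Z$ on pure one-dimensional sheaves: such a sheaf $T$ has $\ch_0=0$, lies in $\Coh_{>s}\subset\cA$, and satisfies $\Im Z(T)=\Im(a)\,\ch_1(T)$ with $\ch_1(T)>0$; were $\Im a\le0$, some such sheaf (twisted by $\cO(n)$ to adjust $\Re Z$ in the case $\Im a=0$) would be a nonzero object of $\cA$ with central charge outside $\mathbb{H}\cup\R_{<0}$, which is impossible. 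Finally, for a sheaf of positive rank and slope $\mu$ one computes $\Im Z=\ch_0\,(\Im(a)\mu+\Im(b))$, positive exactly for $\mu>-\Im(b)/\Im(a)$; hence the torsion pair is $(\Coh_{>s},\Coh_{\le s})$ with $s$ read off from the vanishing of $\Im Z$, giving $\cA=\Coh_{\#s}$ and the relation between $s$ and $\Im a,\Im b$ recorded in the statement.

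It remains to place $\Ker Z$ in $\Geo_{LP}$. Solving $Z(1,s,q)=0$ shows that $\Ker Z$ meets the $\cccp$ in the single point $(1,s,q)$, with $q$ determined by $\Re Z(1,s,q)=0$ (the stated expression). The whole argument rests on one elementary principle: a nonzero object of $\cA$ has central charge in $\mathbb{H}\cup\R_{<0}$, never in $\{0\}\cup\R_{>0}$, since its Harder--Narasimhan factors lie in $\cP(\phi)$ with $\phi\in(0,1]$. I would then exclude each way $(s,q)$ could fail to lie in $\Geo_{LP}$. If $(s,q)$ were on or below $C_{LP}$ and non-exceptional, Theorem~\ref{thm:lp} provides a Gieseker-semistable sheaf $F$ with $v(F)=(1,s,q)$; being slope-$s$ semistable, $F\in\Coh_{\le s}$, so $F[1]\in\cA$ is nonzero with $Z(F[1])=-Z(F)=-\ch_0(F)\,Z(1,s,q)=0$, impossible. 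If $(s,q)$ were an exceptional point $e(\tfrac{p}{2^m})$, the exceptional bundle $E$ already has $Z(E)=0$, the same contradiction. If $(s,q)$ lay on an open segment $l_{e e^+}$, then $E$ has slope $s$ but $v(E)=e$ lies strictly above $(s,q)$, whence $Z(E)\in\R_{<0}$ and $Z(E[1])\in\R_{>0}$ with $E[1]\in\cA$, again impossible. All three obstructions being excluded, $(s,q)\in\Geo_{LP}$.

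The main obstacle is the first step: proving that $\cA$ has amplitude $[-1,0]$ relative to $\Coh(\pp)$, and specifically the lower cohomology bound, which is exactly where the maximality of the skyscraper phase must be used most carefully (the surjection argument controls only the top cohomology, and the bottom needs the Serre-dual input). Once $\cA=\Coh_{\#s}$ is established, Steps two and three are essentially linear algebra together with Drezet--Le Potier, and the three-case exclusion in the last step all collapses to the single fact that objects of the heart cannot have central charge in $\{0\}\cup\R_{>0}$.
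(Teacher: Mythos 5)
Your overall route is the one the paper intends: the text gives no self-contained argument here, deferring to \cite{Bri08} Proposition 10.3 and \cite{BM} Section 3 together with Theorem~\ref{thm:lp}, and your Steps 1 and 2 (the cohomological amplitude bound identifying $\cP((0,1])$ as a tilt $\Coh_{\#s}$, then the linear normalization of $Z$ and the reading-off of the torsion pair from $\Im Z$) are exactly that standard argument. One small caveat in Step 2: your own computation gives $\Im Z(F)=\ch_0(F)\,\Im a\,\bigl(\mu(F)+\Im b/\Im a\bigr)$, hence $s=-\Im b/\Im a$, which is the opposite sign to the displayed bullet; this is most plausibly a typo in the statement, but you should not assert that your formula ``is the relation recorded in the statement'' without flagging the discrepancy.

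The genuine gap is in Step 3, and it sits exactly where you claim everything ``collapses to the single fact that objects of the heart cannot have central charge in $\{0\}\cup\R_{>0}$.'' That fact only constrains $Z$ through sheaves of slope exactly $s$: if $F$ is slope-semistable with $\mu(F)<s$ strictly, then $\Im Z(F[1])>0$ and the heart axiom holds no matter where $(1,s,q)$ lies. Consequently your case (a) works only when $(1,s,q)$ is proportional to an integral class of $\KK(\pp)$, so that Theorem~\ref{thm:lp} actually produces a sheaf with $v(F)=(1,s,q)$ --- and, slightly more generally, when $s$ is rational, so that some slope-$s$ semistable sheaf has $\ch_2/\ch_0>q$ and its shift lands on $\R_{>0}$. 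When $s$ is irrational there are no sheaves of slope $s$ at all, the heart axiom is vacuously satisfied for every $q$, and no object of the heart ever has central charge in $\{0\}\cup\R_{>0}$; yet such points $(1,s,q)$ on or below $C_{LP}$ form a dense subset of what must be excluded. There the contradiction has to come from the support condition (or the failure of Harder--Narasimhan), which requires exhibiting $\sigma$-semistable objects with characters $v_n$ satisfying $|Z(v_n)|/\|v_n\|\to 0$. Gieseker-semistable sheaves with $v_n/\ch_0(v_n)\to(1,s,q)$ are the natural candidates, but one must show they (or controlled pieces of them) stay $\sigma$-semistable --- the HN factors of an object with small central charge need not themselves have small central charge. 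This is precisely the point at which the paper, in the parallel boundary analysis of Proposition~\ref{prop:bdryofstabgeo} (case I, second alternative), has to import the ``last wall'' results of \cite{CHW} and \cite{LZ15} to produce $\sigma$-stable objects of character $v_n$. Your proposal omits this input entirely. The exceptional-point and $l_{ee^+}$ cases, by contrast, are correct as written.
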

Knowing the classification result of stable characters, Theorem \ref{thm:lp}, the property is proved in the same way as that in the
local \textbf P$^2$ and K3 surfaces case.

\subsection{Destabilizing walls}
We collect some small but useful lemmas in this section.
\begin{defn}
We call a stability condition \emph{non-degenerate} if the image of
its central charge is not contained in a real line. We write $\Stab^{\mathrm{nd}}(\pp)$ for all the
non-degenerate stability conditions.
\end{defn}
Note that by Proposition \ref{thm:geostab}, $\Stab^{\Geo}(\pp)\subset \Stab^{\mathrm{nd}}(\pp)$.
In this Picard rank $1$ case, the kernel map on the central charge
is well-defined on Stab$^{\mathrm{nd}}(\pp)$. \[\Ker: \Stab^{\mathrm{nd}}(\pp)\rightarrow
\pkp.\]
\begin{lemma}
$\glt$ acts freely on $\Stab^{\mathrm{nd}}(\pp)$ with
closed orbits, and 
\[\Ker:\Stab^{\mathrm{nd}}(\pp) / \glt \rightarrow \pkp\]
 is a
local homeomorphism. \label{lemma:localhomeo}
\end{lemma}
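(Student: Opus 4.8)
The plan is to realise $\Ker$ as the composite of Bridgeland's local homeomorphism $\sigma\mapsto Z_\sigma$ with the elementary ``take the kernel'' map on central charges, and to deduce all three assertions from the corresponding — and much simpler — statements for the linear $\mathrm{GL}^+(2,\R)$-action on the space of central charges. Write $\Hom^{\mathrm{nd}}$ for the open subset of $\Hom(\mathrm{K}_\R(\pp),\C)\cong\C^3$ consisting of $\R$-linear maps of real rank $2$; by \cite{Bri07} the central charge map restricts to a local homeomorphism $Z\colon\Stab^{\mathrm{nd}}(\pp)\to\Hom^{\mathrm{nd}}$. For $g=(T,f)\in\glt$ one has $Z_{g\sigma}=T^{-1}\circ Z_\sigma$ (Lemma 8.2 of \cite{Bri07}), so $Z$ intertwines the $\glt$-action with the post-composition action of $\mathrm{GL}^+(2,\R)$ through the homomorphism $p\colon\glt\to\mathrm{GL}^+(2,\R)$, $(T,f)\mapsto T$. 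Since $T^{-1}$ is invertible it preserves kernels, so $\Ker Z_{g\sigma}=\Ker Z_\sigma$; thus $\Ker=\pi\circ Z$ with $\pi\colon\Hom^{\mathrm{nd}}\to\pkp$, $W\mapsto[\Ker W]$, and $\Ker$ is $\glt$-invariant and descends to the quotient.

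For freeness, suppose $g=(T,f)$ fixes $\sigma=(Z,\cP)\in\Stab^{\mathrm{nd}}(\pp)$. Then $T^{-1}\circ Z=Z$, so $T^{-1}$ fixes $\im Z$ pointwise; as $\sigma$ is non-degenerate, $\im Z$ spans $\C$ over $\R$, whence $T=\mathrm{id}$. Compatibility of $T=\mathrm{id}$ with $f$ gives $e^{i\pi f(\phi)}\in\R_{>0}\cdot e^{i\pi\phi}$, i.e. $f(\phi)-\phi\in2\Z$ for all $\phi$, and $\cP(f(\phi))=\cP(\phi)$. Each nonzero object lies in a unique slice (if $A\in\cP(\phi)\cap\cP(\phi')$ with $\phi>\phi'$ then $\mathrm{id}_A\in\Hom(\cP(\phi),\cP(\phi'))=0$), so $f(\phi)=\phi$ for at least one $\phi$ (any one with $\cP(\phi)\ne0$); combined with $f(\phi)-\phi\in2\Z$ and the monotonicity $f(\phi+1)=f(\phi)+1$ this rigidity forces $f=\mathrm{id}$, so $g$ is the identity.

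For closed orbits and the local homeomorphism I would first treat the linear model downstairs. The post-composition action of $\mathrm{GL}^+(2,\R)$ on $\Hom^{\mathrm{nd}}$ is free (if $T^{-1}W=W$ with $W$ surjective then $T=\mathrm{id}$) and proper, and $\pi$ is a smooth fibre bundle whose fibre over $[\ell]$ is the set of isomorphisms $\mathrm{K}_\R(\pp)/\ell\xrightarrow{\sim}\C$, which is two free $\mathrm{GL}^+(2,\R)$-orbits distinguished by orientation. Hence each orbit is a connected component of a fibre of $\pi$, so closed in $\Hom^{\mathrm{nd}}$, and the induced map $\Hom^{\mathrm{nd}}/\mathrm{GL}^+(2,\R)\to\pkp$ is a two-sheeted, in particular local, homeomorphism. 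To transport this through $Z$, note that the central subgroup $\langle g_0\rangle\cong\Z=\ker p$ acts by shifting all phases by $2$ while fixing $Z_\sigma$; since $Z$ is locally injective and the action is free, no nontrivial power of $g_0$ sends $\sigma$ into a neighbourhood on which $Z$ is injective, so $\langle g_0\rangle$ acts properly discontinuously and $Z$ descends to a local homeomorphism $\Stab^{\mathrm{nd}}(\pp)/\langle g_0\rangle\to\Hom^{\mathrm{nd}}$ that is $\mathrm{GL}^+(2,\R)$-equivariant. Feeding the freeness and properness of the downstairs action through this local homeomorphism shows the $\glt$-orbits in $\Stab^{\mathrm{nd}}(\pp)$ are closed and that $\Ker$ descends to a local homeomorphism $\Stab^{\mathrm{nd}}(\pp)/\glt\to\pkp$.

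I expect the main obstacle to be precisely this transport step. The extra central $\Z\subset\glt$ (the phase shift by $2$) is invisible to $Z$, so one must cleanly separate the $4$-dimensional $\mathrm{GL}^+(2,\R)$-direction, which $Z$ sees, from this phase direction, which it does not, and verify that quotienting by the full $\glt$ — rather than only by $\mathrm{GL}^+(2,\R)$ — still produces a space locally modelled on $\pkp$. Concretely the crux is the properness (equivalently, closedness of orbits) of the $\mathrm{GL}^+(2,\R)$-action on $\Hom^{\mathrm{nd}}$; once that is secured, the local homeomorphism assertion follows formally from the bundle structure of $\pi$ together with invariance of domain.
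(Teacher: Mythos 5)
Your proposal is correct and takes essentially the same route as the paper: its three-sentence proof likewise factors $\Ker$ through Bridgeland's local homeomorphism onto the space of central charges and identifies $\Hom^{\mathrm{nd}}(\mathrm{K}(\pp),\C)/\mathrm{GL}^+(2,\R)$ with a Grassmannian as a topological space. You merely supply the details the paper leaves implicit --- the freeness argument, the treatment of the central $\Z$ of phase shifts in $\glt$, and the observation that the quotient is really the oriented (two-sheeted) Grassmannian over $\pkp$, which is harmless since only a local homeomorphism is asserted.
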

\begin{proof}
By Theorem 1.2 in \cite{Bri07}, Stab$^{\mathrm{nd}}$ $\rightarrow$ Hom$_{\mathbb
Z}(\mathrm{K}(\pp),\mathbb C)$ is a local homeomorphism. The image is
in the non-degenerate part of Hom$_{\mathbb Z}(\KK(\textbf
P^2),\mathbb C)$. Hom$^{nd}_{\mathbb Z}(\KK(\textbf P^2),\mathbb
C)/$GL$^+(2,\mathbb R)$ is just the quotient Grassmannian Gr$_2(3)$
as a topological space.
\end{proof}
\begin{cor}
$\glt$ acts freely on $\Stab^{\Geo}(\pp)$, and 
\[\Stab^{\Geo}(\pp) / \glt \simeq \Geo_{\mathrm{LP}}.\]
\label{cor:geost}
\end{cor}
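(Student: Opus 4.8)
The plan is to assemble the statement from the three ingredients already established: the free $\glt$-action and local homeomorphism $\Ker$ on $\Stab^{\mathrm{nd}}(\pp)$ (Lemma \ref{lemma:localhomeo}), the explicit geometric conditions $\sigma_{s,q}$ for $(s,q)\in\Geo_{LP}$ (Proposition \ref{prop:sigmasqisgeostab}), and the normal form of Proposition \ref{thm:geostab} together with the remark that every geometric stability condition is $\glt$-equivalent to one of the $\sigma_{s,q}$. First I would record freeness. Since $\Stab^{\Geo}(\pp)\subset\Stab^{\mathrm{nd}}(\pp)$, it suffices to know that $\glt$ maps $\Stab^{\Geo}(\pp)$ to itself, after which freeness is inherited from Lemma \ref{lemma:localhomeo}. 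This is immediate: the $\glt$-action does not change the class of (semi)stable objects and only reparametrizes phases by a common increasing function, so the skyscrapers $k(x)$ stay stable, and as they all carry the single class $(0,0,1)$ they stay of one common phase.

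Next I would identify the image of the kernel map. The map $\Ker$ is $\glt$-invariant, since post-composing a central charge $Z$ with an element of $\mathrm{GL}^+(2,\R)$ does not change $\Ker Z$; hence it descends to $\Ker\colon\Stab^{\Geo}(\pp)/\glt\to\pkp$. A direct reading of Definition \ref{defn:geostabsq} gives $\Ker Z_{s,q}=[1:s:q]$, i.e.\ the point $(1,s,q)$ of $\Geo_{LP}\subset\cccp\subset\pkp$, so Proposition \ref{prop:sigmasqisgeostab} already shows every point of $\Geo_{LP}$ lies in the image (and, by Proposition \ref{thm:geostab}, the image cannot leave $\Geo_{LP}$). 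For injectivity: if two classes share a kernel $(1,s,q)$, the normal form writes each as $[\sigma_{s',q'}]$ with $\Ker[\sigma_{s',q'}]=(1,s',q')$, forcing $(s',q')=(s,q)$ in both cases, so the two classes coincide. Thus $\Ker$ is a continuous bijection $\Stab^{\Geo}(\pp)/\glt\to\Geo_{LP}$.

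It remains to upgrade this to a homeomorphism, and this is the step I expect to be the main obstacle. The region $\Geo_{LP}$ is open in $\pkp$, and $\Ker$ is a local homeomorphism on all of $\Stab^{\mathrm{nd}}(\pp)/\glt$ by Lemma \ref{lemma:localhomeo}; so a bijective restriction to an \emph{open} piece will automatically be a homeomorphism onto its open image. The content is therefore the openness of $\Stab^{\Geo}(\pp)$ inside $\Stab^{\mathrm{nd}}(\pp)$, equivalently the continuity of the inverse section $(s,q)\mapsto[\sigma_{s,q}]$. I would argue this exactly as in \cite{Bri08} and \cite{BM}: all skyscrapers share the single Chern character $(0,0,1)$ and form a bounded family, and stability of a fixed object is an open condition under deformation, so a small deformation of a geometric $\sigma$ keeps every $k(x)$ stable; being of one class they keep a common phase, so the deformed condition is again geometric. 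The sharp point, where the shape of the Le Potier curve (Theorem \ref{thm:lp}) enters, is that the discriminant inequality cutting out $\Geo_{LP}$ is precisely the support condition guaranteeing the skyscrapers remain stable, which is what pins the open geometric locus to $\Ker^{-1}(\Geo_{LP})$ and closes the argument.
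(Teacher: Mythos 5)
Your proposal is correct and follows exactly the route the paper intends: the corollary is stated there without proof as an immediate consequence of Lemma \ref{lemma:localhomeo}, Proposition \ref{prop:sigmasqisgeostab}, and Proposition \ref{thm:geostab}, and your assembly of these (freeness inherited from $\Stab^{\mathrm{nd}}(\pp)$, surjectivity from $\Ker Z_{s,q}=(1,s,q)$, injectivity from the normal form, and the homeomorphism from openness of the geometric locus plus the local-homeomorphism property of $\Ker$) is precisely the intended argument. Your identification of the openness of $\Stab^{\Geo}(\pp)$ as the one step requiring the deformation argument of \cite{Bri08} and \cite{BM} is the right place to put the weight.
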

\begin{lemma}
Let $Z$ be  a non-degenerate central charge, $v$ and $w$  be two non-zero
characters,  then
\[Z(v)\varparallel Z(w)\]
 if and only if $v$, $w$ and the line $\Ker Z$ in $\KK_{\R}(\pp)$ spans a two-dimensional plane.
\label{lemma:paraandspanplane}\end{lemma}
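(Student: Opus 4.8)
The plan is to reduce everything to linear algebra in the three-dimensional real vector space $\KK_\R(\pp)$. First I would pin down the single role of non-degeneracy: the image of the $\R$-linear map $Z$ is an $\R$-subspace of $\C\cong\R^2$ which, by hypothesis, is not contained in a real line, hence is all of $\C$. Thus $Z$ is surjective and $\dim_\R\Ker Z=3-2=1$, so $\Ker Z$ is genuinely a line; fix a nonzero $k$ spanning it, giving an induced isomorphism $\bar Z\colon \KK_\R(\pp)/\langle k\rangle\xrightarrow{\sim}\C$ of two-dimensional real spaces.

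The key step is that a linear isomorphism preserves $\R$-linear dependence, while for two vectors of the plane $\C$ the relation $\varparallel$ \emph{is} $\R$-linear dependence (one is a real multiple of the other, with the convention that the zero vector is parallel to everything). Hence $Z(v)\varparallel Z(w)$ holds if and only if the classes $\bar v,\bar w$ of $v,w$ in $\KK_\R(\pp)/\langle k\rangle$ are $\R$-linearly dependent. Unwinding this downstairs, $\bar v,\bar w$ are dependent exactly when there is a real pair $(\alpha,\beta)\neq(0,0)$ with $\alpha v+\beta w\in\langle k\rangle$, i.e. exactly when $\{v,w,k\}$ is a linearly dependent set (the relation must involve $v$ or $w$, since $k\neq0$ excludes a relation in $k$ alone). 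As $\KK_\R(\pp)$ is three-dimensional, this is the same as saying that $v$, $w$ and the line $\Ker Z=\langle k\rangle$ do not span the whole space; away from the degenerate case $v,w\in\langle k\rangle$ (where $Z(v)=Z(w)=0$), the span is then exactly the two-dimensional plane asserted.

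Alternatively, and perhaps more transparently, I would run the argument through a single determinant. Writing $Z$ as a rank-two $2\times 3$ matrix $M$ with $Mk=0$, the quantity $\det[\,Mv\mid Mw\,]$ vanishes precisely when $Z(v)\varparallel Z(w)$. Viewed as an alternating bilinear form in $(v,w)$ on $\R^3$ it vanishes whenever $v$ or $w$ lies in $\langle k\rangle$, so it must be a scalar multiple $c\,\det[\,v\mid w\mid k\,]$; the scalar $c$ is nonzero because $M$ has rank two. Thus $Z(v)\varparallel Z(w)\iff \det[\,v\mid w\mid k\,]=0\iff v,w,k$ are coplanar, which is again the claim.

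The work here is essentially bookkeeping rather than a genuine obstacle. The one point that deserves care is the convention for $\varparallel$: to make the equivalence an honest ``if and only if'' including the boundary cases where $Z(v)$ or $Z(w)$ vanishes, one must fix $\varparallel$ to mean $\R$-linear dependence in $\C$, and one should flag the truly degenerate possibility $v,w\in\Ker Z$, in which both sides collapse and the span drops below a plane.
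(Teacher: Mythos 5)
Your argument is correct and is essentially the paper's own proof: the paper also observes that $Z(v)\varparallel Z(w)$ is equivalent to $Z(av+bw)=0$ for some nontrivial real pair $(a,b)$, i.e.\ to $av+bw$ lying on the line $\Ker Z$, which is exactly your linear-dependence reformulation (the determinant variant is just a repackaging of the same computation). Your extra care about the convention for $\varparallel$ and the degenerate case $v,w\in\Ker Z$ is a reasonable tidying-up of what the paper leaves implicit, but it is not a different method.
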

\begin{proof}
$Z(v)\varparallel Z(w)$ if and only if $Z(av+bw)$  $=$ $0$ for some
$a,b\in\mathbb R$. $v$, $w$, $av+bw$ and $O$ are on the same plane.
\end{proof}

\begin{lemma}
Let $P$ be a point in $\Geo_{LP}$, $E$ and
$F$ be two objects in $\Coh_{P}$. The phase
\[\phi_{P}(E)>\phi_{P}(F)\] if and only if the ray $l^+_{PE}$ is
above $l^+_{PF}$. \label{lemma:slopecompare}
\end{lemma}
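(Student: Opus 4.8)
The plan is to turn the comparison of phases into an elementary comparison of angles at $P$ on the $\cccp$, using the explicit central charge of Definition~\ref{defn:geostabsq}. Write $P=(1,s,q)$, and for an object $E$ with $\ch_0(E)\neq 0$ let $(dx,dy)=\big(\tfrac{\ch_1(E)}{\ch_0(E)}-s,\ \tfrac{\ch_2(E)}{\ch_0(E)}-q\big)$ be the displacement of $v(E)$ from $P$. Factoring $\ch_0(E)$ out of $Z_P$ and recognising the result as a quarter-turn, one gets the identity
\[ Z_P(E)\;=\;i\,\ch_0(E)\,\big(dx+i\,dy\big), \]
so that $\arg Z_P(E)=\tfrac{\pi}{2}+\arg\ch_0(E)+\arg(dx+i\,dy)\pmod{2\pi}$. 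Since $Z_P$ is a stability function on $\Coh_P$ by Proposition~\ref{prop:sigmasqisgeostab}, every nonzero $E\in\Coh_P$ has $\arg Z_P(E)\in(0,\pi]$, so $\pi\,\phi_P(E)=\arg Z_P(E)$ is well defined.

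Next I would verify, by a short case analysis on the sign of $\ch_0(E)$, the uniform formula $\pi\,\phi_P(E)=\tfrac{\pi}{2}+\theta_E$, where $\theta_E\in(-\tfrac{\pi}{2},\tfrac{\pi}{2}]$ denotes the direction angle of the ray $l^+_{PE}$ inside $\mathcal H_P$. When $\ch_0(E)>0$, the condition $\Im Z_P(E)\geq 0$ forces $v(E)\in\overline{\mathcal H_P}$, the ray $l^+_{PE}$ points from $P$ straight at $v(E)$, and $\arg\ch_0(E)=0$, so $\theta_E=\arg(dx+i\,dy)$ outright. The delicate case is $\ch_0(E)<0$ (the shifted objects coming from $\Coh_{\leq s}[1]$): now $v(E)$ lies in the closed left half plane, so by definition $l^+_{PE}$ points \emph{opposite} to $v(E)$ and its angle is $\arg(dx+i\,dy)-\pi$; this extra $-\pi$ is cancelled exactly by $\arg\ch_0(E)=\pi$, restoring the same formula. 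For torsion objects ($\ch_0(E)=0$) one computes $Z_P(E)=-\ch_2(E)+i\,\ch_1(E)=i\big(\ch_1(E)+i\,\ch_2(E)\big)$ directly, with $l^+_{PE}$ the ray from $P$ towards the point at infinity in direction $(\ch_1(E),\ch_2(E))$, again giving $\pi\,\phi_P(E)=\tfrac{\pi}{2}+\theta_E$.

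Granting this identity, the lemma is immediate: both $\theta_E,\theta_F$ lie in $(-\tfrac{\pi}{2},\tfrac{\pi}{2}]$, so $\phi_P(E)>\phi_P(F)\iff\theta_E>\theta_F$, and since the angle is measured counterclockwise from the downward ray $l_{P-}$, a larger angle means precisely that $l^+_{PE}$ sits above $l^+_{PF}$ in $\mathcal H_P$; this simultaneously re-derives the angle description recorded in Remark~\ref{rem:anglephase}. I expect the only real difficulty to be the bookkeeping across the three object types in the tilted heart $\Coh_P$ — in particular checking that for the negative-rank objects the reversal of the ray direction compensates the sign of $\ch_0(E)$, and handling the boundary situations (phase exactly $1$, or $v(E)$ at infinity) so that ``above'' is consistently read as ``larger angle at $P$''.
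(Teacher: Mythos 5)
Your proposal is correct and follows essentially the same route as the paper: the paper's proof simply invokes Remark \ref{rem:anglephase} (that $\pi\phi_P(E)$ is the angle between $l^+_{PE}$ and $l_{P-}$ at $P$) and declares the statement clear, while you supply the computation behind that remark via the identity $Z_P(E)=i\,\ch_0(E)(dx+i\,dy)$ and the sign case analysis. The extra bookkeeping for $\ch_0(E)<0$ and $\ch_0(E)=0$ is exactly the content the paper leaves implicit, and it checks out.
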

\begin{proof}
By the definition of $l^+_{PE}$, $l_{P-}$, $Z_{P}$ and Remark \ref{rem:anglephase}, the angle spanned by the rays $l^+_{PE}$ and $l_{P-}$ at point $P$ on the $\cccp$ is
$\pi\phi_{P}(E)$. The statement is clear.
\end{proof}

\begin{prop}
Let $E$ be an $\sigma_P$-stable object, then one of the
following cases will hold:
\begin{enumerate}
\item $\tilde{v}(E)$ is not in the open cone spanned by $\Geo_{LP}$ and the origin.
\item There exists a slope semistable sheaf $F$ such that the
point $P$ is in the region bounded by $l^r_{EF}$ and
$l_{F-}$.
\end{enumerate}
In either case, the line \textbf{\emph{$l_{EP}$ is not inside}}
$\Geo_{LP}$. In particular, at least one of $v(E)$ and $P$ is
outside $\Delta_{<0}$.
\label{prop:epintersectsgeolp}
\end{prop}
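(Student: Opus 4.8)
The plan is to separate the easy alternative (1) from the substantive alternative (2). Observe that $\tilde v(E)$ lies in the open cone spanned by $\Geo_{LP}$ and the origin precisely when $\ch_0(E)>0$ and $v(E)\in\Geo_{LP}$; in all other cases alternative (1) holds. Since $v(E)$ (and hence the segment $l_{EP}$ and all the rays $l^r_{EF}$, $l_{F-}$) is invariant under shifts, and $E[k]$ is again $\sigma_P$-stable, when proving the final geometric assertion $l_{EP}\not\subset\Geo_{LP}$ I may freely assume $\ch_0(E)\ge 0$. If $\ch_0(E)=0$ or $v(E)\notin\Geo_{LP}$, then alternative (1) holds and, since $P\in\Geo_{LP}$ while the endpoint $v(E)$ is at infinity or outside $\Geo_{LP}$, the segment $l_{EP}$ is already not contained in $\Geo_{LP}$. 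Thus I would reduce to $\ch_0(E)>0$, $v(E)\in\Geo_{LP}$, and produce the sheaf $F$ of alternative (2).

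The conceptual engine is that the kernel of $Z_P$ is the point $P$ itself. Because $v(E)\in\Geo_{LP}$, Proposition \ref{prop:sigmasqisgeostab} makes $\sigma_{v(E)}$ a genuine geometric stability condition whose central charge kills $E$, so $Z_{v(E)}(E)=0$ and $E$ is not $\sigma_{v(E)}$-semistable. By Lemma \ref{lemma:paraandspanplane} every wall of $E$ is carried by $\Ker$ to a line through $v(E)$. Deforming $\sigma$ from $P$ toward $v(E)$ therefore cannot preserve stability all the way to $v(E)$ (otherwise, by closedness of semistability, $E$ would be $\sigma_{v(E)}$-semistable despite $Z_{v(E)}(E)=0$), so $E$ is destabilized and a Jordan–Hölder factor at the wall is the seed of $F$. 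Equivalently and more concretely, $E$ cannot itself be a slope-semistable sheaf, since such sheaves have $v\notin\Geo_{LP}$ (established in the last paragraph), which would contradict $v(E)\in\Geo_{LP}$.

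The step I expect to be the main obstacle is converting this into the precise statement (2). The plan is to pass to the cohomology sheaves $H^{-1}(E)$ and $H^0(E)$ of $E$ relative to the tilt $\Coh_{\# s}$ and to their slope Harder–Narasimhan factors, and to select the extremal slope-semistable factor $F$ (a maximal-slope subsheaf, or a minimal-slope quotient). Using the angular description of phases in Remark \ref{rem:anglephase} together with Lemma \ref{lemma:slopecompare}, the $\sigma_P$-stability inequality between $E$ and $F$ should translate into $l^+_{PF}$ lying on the side of $l^+_{PE}$ that places $P$ in the wedge at $v(F)$ bounded by $l^r_{EF}$ and $l_{F-}$, which is alternative (2). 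The difficulty is exactly the bookkeeping: $E$ may be a genuine two-term complex, so one must argue that the correct factor exists as an honest slope-semistable sheaf and that its character lands in the vertical region lying above the segment $l_{EP}$.

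Finally I would record the geometric consequences, which are clean once (2) is in hand. First, any slope-semistable sheaf $F$ satisfies $v(F)\notin\Geo_{LP}$: its Jordan–Hölder factors share the slope of $F$, hence lie on a single vertical line, and by Theorem \ref{thm:lp} each lies on or below $C_{LP}$ or is the exceptional point, so the rank-weighted average $v(F)$ lies on or below $C_{LP}$ or on an excluded segment $l_{ee^+}$. Since $\Geo_{LP}$ meets each vertical line in an upward ray, and the wedge condition forces $l_{EP}$ to cross the downward ray $l_{F-}$ at a point $Q$ no higher than $v(F)$ on that line, we get $Q\notin\Geo_{LP}$ and hence $l_{EP}\not\subset\Geo_{LP}$. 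For the last assertion, $\Delta_{<0}$ is convex and contained in $\Geo_{LP}$ (it is the region above the parabola $\Delta_0$, which lies above $C_{LP}$); were both $v(E)$ and $P$ in $\Delta_{<0}$, then $l_{EP}\subset\Delta_{<0}\subset\Geo_{LP}$, contradicting what was just proved, so at least one of $v(E)$, $P$ lies outside $\Delta_{<0}$.
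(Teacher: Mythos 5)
Your overall strategy matches the paper's: reduce to $\ch_0(E)>0$ with $v(E)\in\Geo_{LP}$ via shift-invariance, extract an extremal slope--HN factor $F$ from the cohomology sheaves of $E$, and translate stability into the wedge condition; your derivations of the two geometric consequences from alternative (2) are correct (indeed more detailed than the paper's own). The problem is that the step you yourself flag as ``the main obstacle'' --- actually placing $P$ in the region bounded by $l^r_{EF}$ and $l_{F-}$ --- is never carried out, and it does not follow from the one ingredient you invoke. Lemma \ref{lemma:slopecompare} applied to $\phi_P(E)<\phi_P(F)$ only yields that $P$ lies below the line $L_{EF}$; that is one of three constraints needed to pin $P$ into the wedge at $v(F)$, so your expectation that ``the stability inequality should translate into'' alternative (2) is not correct as stated.

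The two missing constraints are where the content lies. First, one must show $v(F)$ lies strictly to the left of $v(E)$, so that $l^r_{EF}$ is the correct ray and the wedge opens toward $P$. In the paper this is the rank-weighted slope computation
\[
\frac{\ch_1(E)}{\ch_0(E)}=\frac{\ch_1(F)+\ch_1(G)-\ch_1(D)}{\ch_0(F)+\ch_0(G)-\ch_0(D)}\ \geq\ \frac{\ch_1(F)}{\ch_0(F)},
\]
with $F=\HH^0(E)_{\min}$, $G=\Ker\big(\HH^0(E)\to F\big)$, $D=\HH^{-1}(E)$ and $\mu(D)<\mu(F)<\mu(G)$; strictness is forced because equality would make $E=F$ a slope-semistable sheaf with $v(E)\in\Geo_{LP}$, contradicting Theorem \ref{thm:lp}. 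Second, one must show $P$ lies to the left of the vertical line through $v(F)$; this comes from $F\in\Coh_{>s}$, i.e.\ $\mu(F)>s$, and is what excludes $P$ sitting below $L_{EF}$ but to the right of $v(F)$. (In the $\ch_0(E)<0$ case one needs the mirror statements for $F=\HH^{-1}(E)_{\max}\in\Coh_{\leq s}$, together with the observation that $l_{F-}$ misses $\Geo_{LP}$ to rule out $P$ on the vertical line itself.) You must also verify that your extremal factor is genuinely a quotient of $E$ (resp.\ shift of a subobject) inside $\Coh_{\#s}$ --- using that $\Coh_{>s}$ is closed under quotients --- before the inequality $\phi_P(E)<\phi_P(F)$ is even available. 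Without these pieces the wedge condition, and hence the proposition, is not proved; the wall-deformation paragraph about moving $\sigma$ from $P$ toward $v(E)$ is not used anywhere and can be dropped.
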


\begin{proof}
Suppose $\tilde{v}(E)$ is in the $\Geo_{LP}$-cone, in
particular, $\ch_0$ is not $0$.

When $\ch_0(E)>0$, $\HH^0(E)$ is non-zero. Let $F=\HH^0(E)_{\min}$ be the
quotient sheaf of $\HH^0(E)$ with the minimum slope. Let $D$ be
H$^{-1}(E)$ and $G$ be the kernel of $\HH^0(E)\rightarrow F$. We have
$\mu(D)<\mu(F)<\mu(G)$,
when $D$ and $G$ are non-zero. We have the relation
\[\frac{\ch_1(E)}{\ch_0(E)}\; = \; \frac{\ch_1(F)+\ch_1(G)-\ch_1(D)}{\ch_0(F)+\ch_0(G)-\ch_0(D)}\; \geq\; \frac{\ch_1(F)}{\ch_0(F)}.\]

The equality only holds when $D$ and $G$ are both zero, but this is
not possible as else $v(E)$ $=$ $v(F)$ and  is inside $\Geo_{LP}$ by Theorem \ref{thm:lp}.
Therefore, $v(F)$ is to the left of $v(E)$ on the $\cccp$. Let $P=(1,s,q)$, as $F\in\Coh_{>
s}$, $P$ is to the left of $v(F)$. In addition, as
$\phi_{P}(E)$ $<$ $\phi_{P}(F)$, by Lemma
\ref{lemma:slopecompare}, $P$ is below the line $L_{EF}$. Therefore, $P$ is in the region bounded by $l^r_{EF}$ and $l_{F-}$.

When $\ch_0(E)<0$, let $F=\HH^{-1}(E)_{\max}$ be the subsheaf of
$\HH^{-1}(E)$ with maximum Mumford slope. By the same argument, $v(F)$
is to the right of $v(E)$. As $F\in \Coh_{\leq s}$, $P$ is to
the right of $v(F)$ or on the line $L_{F(0,0,1)}$. In
addition, as $\phi_{P}(F[1])$ $<$ $\phi_{P}(E)$, by Lemma
\ref{lemma:slopecompare}, $P$ is below $L_{EF}$. As $l_{F-}$ does
not intersect $\Geo_{LP}$, $P$ is not on $L_{F(0,0,1)}$. Therefore, $P$ is in the region bounded by $l^r_{EF}$ and $l_{F-}$.

For the last statement, the region $\Delta_{<0}$ is bounded by
a parabola and is convex. For any $v(E)$ and $P$ that are
both in the region, $l_{EP}$ is also in the region which is
contained in $\Geo_{LP}$.
\end{proof}

\begin{cor}
Let $E$ be an exceptional bundle, and $P=(1,s,q)$
be a point in $\Geo_{LP}$, then  $E$ is $\sigma_{P}$-stable if $s <
\mu(E)$ and $l_{EP}$ is contained in $\Geo_{LP}$. On the shifted
side, $E[1]$ is $\sigma_{P}$-stable, if $\mu(E) \leq s$ and
$l_{EP}$ is contained in $\Geo_{LP}$. \label{cor:regionofEstab}
\end{cor}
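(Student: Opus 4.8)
The plan is to argue by contradiction, reducing stability of $E$ to the nonexistence of a high-phase subobject and then contradicting Proposition \ref{prop:epintersectsgeolp} with one of its stable Jordan--H\"older factors.

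First normalize. Assume $s<\mu(E)$; then $E$ is a $\mu$-stable bundle of slope $>s$, so $E\in\Coh_{>s}\subset\Coh_{\#s}$ lies in the heart with $\ch_0(E)>0$ and $\phi_P(E)\in(0,1)$. (The case $\mu(E)\le s$ is the mirror image: $E\in\Coh_{\le s}$, so $E[1]\in\Coh_{\#s}$ with $\ch_0(E[1])<0$, and the argument below runs verbatim with the roles of sub/quotient and of $\HH^0/\HH^{-1}$ interchanged, exactly as the two signs $\ch_0\gtrless0$ are treated symmetrically in Proposition \ref{prop:epintersectsgeolp}.) Recall that $E$ is $\sigma_P$-stable precisely when every proper nonzero subobject $A\subsetneq E$ in $\Coh_{\#s}$ satisfies $\phi_P(A)<\phi_P(E)$. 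So suppose not, and let $A\subsetneq E$ be a proper subobject with $\phi_P(A)\ge\phi_P(E)$; replacing $A$ by the maximal destabilizing subobject (or, in the borderline case $\phi_P(A)=\phi_P(E)$, by a semistable subobject of that phase) we may take $A$ to be $\sigma_P$-semistable. Let $S\hookrightarrow A$ be a $\sigma_P$-stable Jordan--H\"older sub-factor; then $S$ is $\sigma_P$-stable, $S\hookrightarrow E$ in $\Coh_{\#s}$, and $\phi_P(S)=\phi_P(A)\ge\phi_P(E)$.

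Next read off the geometry. From $\tilde v(E)=\tilde v(S)+\tilde v(E/S)$ the projective points $v(S),v(E),v(E/S)$ are collinear, and by Lemma \ref{lemma:slopecompare} the ray $l^+_{PS}$ lies on or above $l^+_{PE}$ (since $\phi_P(S)\ge\phi_P(E)$); by Lemma \ref{lemma:paraandspanplane}, $\phi_P(S)=\phi_P(E)$ would force $P$ onto the line $L_{SE}$. The crux is to show that this configuration, together with the hypothesis $l_{EP}\subset\Geo_{LP}$, forces the segment $l_{SP}$ into $\Geo_{LP}$ as well. Here one uses that $\Geo_{LP}$ is contained in the \emph{convex} region $\Delta_{<\frac12}$ cut out by the parabola $\Delta_{\frac12}$: since $v(E)$ and $P$ lie in $\Delta_{<\frac12}$ with $l_{EP}\subset\Geo_{LP}$, and since $v(S)$ is pinned down (via the long exact cohomology sequence of $0\to S\to E\to E/S\to 0$ together with Theorem \ref{thm:lp}, which confines the character of a stable factor $S$ to lie on or below the Le Potier curve when $S$ is an honest sheaf, or to the shifted locus otherwise) on the high side of $l^+_{PE}$, the open segment from $v(S)$ to $P$ cannot dip below $C_{LP}$. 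This is precisely the step that mirrors, but now runs in the opposite direction to, the destabilizing-sheaf construction in the proof of Proposition \ref{prop:epintersectsgeolp}.

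Finally, $S$ being $\sigma_P$-stable, Proposition \ref{prop:epintersectsgeolp} asserts that $l_{SP}$ is \emph{not} contained in $\Geo_{LP}$, contradicting the previous paragraph. Hence no subobject $A$ with $\phi_P(A)\ge\phi_P(E)$ exists, and $E$ is $\sigma_P$-stable. I expect the genuine difficulty to be concentrated entirely in the geometric step of the third paragraph: a subobject of $E$ in the tilted heart is not an honest subsheaf (its $\HH^{-1}$ may be a nonzero sheaf in $\Coh_{\le s}$), so locating $v(S)$ and verifying $l_{SP}\subset\Geo_{LP}$ requires the same careful cohomology-and-slope bookkeeping used for Proposition \ref{prop:epintersectsgeolp}, including separating the sub-cases $\ch_0(S)>0$, $\ch_0(S)<0$, and $\ch_0(S)=0$.
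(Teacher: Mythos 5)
Your overall strategy --- pass to a $\sigma_P$-stable subobject $S$ of $E$ with $\phi_P(S)\ge\phi_P(E)$, show $l_{SP}\subset\Geo_{LP}$, and contradict Proposition \ref{prop:epintersectsgeolp} --- is exactly the paper's strategy. But the step you yourself identify as the crux is where the argument breaks, and the mechanism you propose for it does not work. First, the claimed inclusion $\Geo_{LP}\subset\Delta_{<\frac12}$ is false: $C_{LP}$ lies \emph{between} $\Delta_{\frac12}$ and $\Delta_1$, so $\Geo_{LP}$ contains points above the segments $l_{e^+e^l}$, $l_{e^re^+}$ that are still below the parabola $\Delta_{\frac12}$. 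Conversely $\Delta_{<\frac12}\not\subset\Geo_{LP}$ either, since the upper portions of the excluded vertical segments $l_{ee^+}$ lie in $\Delta_{<\frac12}$; so even if you had $l_{SP}\subset\Delta_{<\frac12}$ by convexity, that would not give $l_{SP}\subset\Geo_{LP}$. Second, and more fundamentally, you never actually locate $v(S)$: knowing only that $l^+_{PS}$ lies on or above $l^+_{PE}$ confines $v(S)$ to the half-plane above $L_{PE}$, and a point there far to the left of $P$ or far to the right of $v(E)$ can perfectly well have $l_{SP}$ crossing $C_{LP}$ or a segment $l_{ee^+}$. (Your appeal to Theorem \ref{thm:lp} also misfires: it constrains Gieseker-semistable characters, not $\sigma_P$-stable ones, and it would place them \emph{below} $C_{LP}$, which is the wrong side for your purposes.)

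The missing ingredient, which the paper supplies, is the horizontal confinement $s<\mu(S)\le\mu(E)$. This comes from the long exact cohomology sequence of $0\to S\to E\to E/S\to 0$ in $\Coh_{\#s}$: it gives $\HH^{-1}(S)\subset\HH^{-1}(E)=0$, so $S$ is an honest sheaf in $\Coh_{>s}$ (hence $\mu(S)>s$), and comparison with $E$ through this sequence bounds $\mu(S)$ above by $\mu(E)$. Together with Lemma \ref{lemma:slopecompare} this places $v(S)$ in the region bounded by $l_{P+}$, $l_{PE}$ and $l_{E+}$, i.e.\ the vertical strip above the segment $l_{PE}$; and since moving straight up from a point of $\Geo_{LP}$ cannot leave $\Geo_{LP}$ without hitting some $l_{ee^+}$ (which would force $l_{EP}$ either to meet that segment or to dip below $C_{LP}$), this whole region lies in $\Geo_{LP}$, and hence so does $l_{SP}$. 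Without that slope bound your third paragraph has no way to rule out the bad configurations, so as written the proof has a genuine gap at its decisive step.
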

\begin{proof}
Assume $s < \mu(E)$ and $E$ is not $\sigma_{P}$-stable, then there
is a $\sigma_{P}$-stable object $F$ destabilizing $E$. By the
exact sequence:
\[0\rightarrow \HH^{-1}(F)\rightarrow \HH^{-1}(E)\rightarrow
\HH^{-1}(E/F)\rightarrow \HH^0(F)\rightarrow \HH^0(E)\rightarrow
\HH^0(E/F)\rightarrow 0,\]
 we get H$^{-1}(F)\subset \HH^{-1}(E)=0$, and $v(F)$ is between
$L_{P(0,0,1)}$ and $L_{E(0,0,1)}$. As $\phi_{P}(F)\geq \phi_{P}(E)$
by assumption, by Lemma \ref{lemma:slopecompare}, $v(F)$ is in the
region bounded by $l_{P+}$, $l_{PE}$, and $l_{E+}$. As $l_{EP}$ is in
$\Geo_{LP}$, the whole open region bounded by these three segments is also in $\Geo_{LP}$. The whole line segment $l_{FP}$ is contained in $\Geo_{LP}$(unless $v(F)=v(E)$, which implies $E=F$). By Proposition
\ref{prop:epintersectsgeolp}, $F$ is not $\sigma_{P}$-stable,
which is a contradiction. The $s\geq \mu(E)$ case is proved in a
similar way.
\end{proof}

\begin{rem}
The condition `$l_{EP}$ is contained in $\Geo_{LP}$' is also a
necessary condition. Any ray from $v(E)$ only intersects the Le Potier curve
once, and only intersects finitely many $ee^+$ segments. Assume we
are in the $s<\mu(E)$ case and $l_{EP}$ intersects some $ee^+$
segments, we may choose the one (denoted by $F$) with minimum
$\frac{\ch_1}{\ch_0}$ coordinate. The segment $l_{FP}$ is contained in
$\Geo_{LP}$, and the $\phi_{s,q}(F)>\phi_{s,q}(E)$. By \cite{GorRu},
$\Hom(F,E)$ $\neq$ $0$ when $\mu(F)$ $<$ $\mu(E)$. This leads a
contradiction if $E$ is $\sigma_{s,q}$-stable.
\end{rem}

\section {Algebraic stability conditions}
\subsection{Review: Algebraic stability conditions}
\begin{defn}
We call an ordered set $\mathcal E$ $=$ $\{E_1,E_2,E_3\}$ 
\emph{exceptional triple} on $\Db(\pp)$ if $\mathcal E$ is a
full strong exceptional collection of coherent sheaves on
$\Db(\pp)$. \label{def:exceptionaltriple}
\end{defn}
We will write $e^*_i$ for $e^*(E_i)$ as the associated points on the $\cccp$, where $i=1,2,3$ and $*$ could be $+$, $l$, or $r$. By the definition of $e^*$'s, the relation of dyadic numbers (\ref{eq:dyadictriples}), and Serre duality, the points $e^+_1,e^r_1,e_2,e_3$ are collinear on the line of
$\chi(-,E_1)=0$, and $e^+_3,e^l_3,e_2,e_1$ are collinear on the line of
$\chi(E_3,-)=0$. 

We are now ready to recall the construction of algebraic stability
conditions with respect to exceptional triples.
\begin{prop} [\cite{Mac} Section 3]
Let $\mathcal E$ be an exceptional triple on $\Db(\pp)$, for
any positive real numbers $m_1$, $m_2$, $m_3$ and real numbers
$\phi_1$, $\phi_2$, $\phi_3$ such that:
\[\phi_1<\phi_2<\phi_3,\text{ and }\phi_1+1<\phi_3.\] There is a
unique stability condition $\sigma$ $=$ $(Z,\mathcal P)$ such that
\begin{enumerate}
\item each $E_j$ is stable with phase $\phi_j$;
\item $Z(E_j)=m_j e^{i\pi \phi_j}$.
\end{enumerate}
\label{prop:thetaE}
\end{prop}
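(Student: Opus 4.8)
The plan is to determine $Z$ first, then build the slicing $\mathcal P$ from the semiorthogonal decomposition attached to $\mathcal E$, and finally to argue uniqueness. Since $\mathcal E=\{E_1,E_2,E_3\}$ is a full exceptional collection, the classes $[E_1],[E_2],[E_3]$ form a $\Z$-basis of $\KK_0(\pp)$, so prescribing $Z(E_j)=m_je^{i\pi\phi_j}$ determines a unique $\R$-linear $Z\colon\KK_0(\pp)\to\C$. I would also note that the hypothesis $\phi_1<\phi_2<\phi_3$ is in fact forced on any slicing with $E_j\in\mathcal P(\phi_j)$: because the collection is strong, the only possibly nonzero groups among the $E_i$ are $\Hom^0(E_i,E_j)$ with $i\le j$, and for the triples in $(\ref{eq:dyadictriples})$ the adjacent ones $\Hom^0(E_i,E_{i+1})$ are nonzero, which by axiom (3) of a pre-stability condition forces $\phi_i\le\phi_{i+1}$.

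For existence I would construct the slicing directly from the decomposition $\Db(\pp)=\langle\langle E_1\rangle,\langle E_2\rangle,\langle E_3\rangle\rangle$, where $\Hom^\bullet(E_i,E_j)=0$ for $i>j$. Each piece $\langle E_j\rangle\simeq\Db(\C)$ carries the stability condition whose stable objects are exactly the shifts $E_j[l]$, placed at phase $\phi_j+l$; one checks at once that $Z(E_j[l])=m_je^{i\pi(\phi_j+l)}$, so the corresponding slices are $Z$-compatible. The key computation is that among all elementary objects $\{E_j[l]\}$ the only nonvanishing Hom-groups are $\Hom(E_i[l],E_j[l])=\Hom^0(E_i,E_j)$ with $i\le j$; since there $\phi_i+l\le\phi_j+l$, every nonzero morphism runs from a lower phase to a higher one. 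Defining $\mathcal P(\phi)$ to be the extension closure of those $E_j[l]$ with $\phi_j+l=\phi$, this observation immediately verifies the orthogonality axiom $\Hom(\mathcal P(\phi),\mathcal P(\psi))=0$ for $\phi>\psi$. Moreover every $X$ has, via the semiorthogonal decomposition, a finite filtration whose factors are direct sums of the $E_j[l]$ and hence lie in the slices; together with the orthogonality this upgrades to genuine Harder--Narasimhan filtrations by the standard argument, so $(Z,\mathcal P)$ is a pre-stability condition.

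To promote this to a stability condition I would check the support property. Since the stable classes are, up to sign, just $[E_1],[E_2],[E_3]$, this reduces to producing a single quadratic form $Q$ on $\KK_\R(\pp)\cong\R^3$ that is nonnegative on the three lines $\R[E_j]$ and negative on $\Ker Z$; as $m_j>0$ keeps each $[E_j]$ out of $\Ker Z$, writing $Q$ in the basis $[E_j]$ makes this an elementary finite construction. At that point each $E_j$ is (up to scalar) the only object of the one-dimensional slice $\mathcal P(\phi_j)$, hence $\sigma$-stable with phase $\phi_j$ and $Z(E_j)=m_je^{i\pi\phi_j}$, so $\sigma=(Z,\mathcal P)$ realizes (1) and (2).

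The main obstacle is uniqueness, and this is where the gap hypothesis $\phi_1+1<\phi_3$ is essential. Given another $\sigma'=(Z',\mathcal P')$ satisfying (1),(2), one has $Z'=Z$ and each $E_j[l]$ is $\sigma'$-stable of phase $\phi_j+l$; the goal is to show these are the \emph{only} $\sigma'$-stable objects, for then the $\sigma'$-filtrations are assembled from the same elementary factors as for $\sigma$ and $\mathcal P'=\mathcal P$ follows. I would argue that an object controlled by $Z$ and by the $\sigma'$-stability of the generators must collapse to a single $E_j[l]$ precisely because the extreme phases are separated by more than $1$; the gap is exactly what excludes the geometric realizations (where $\phi_3-\phi_1<1$ and many Gieseker-stable sheaves become stable), in which (1),(2) would fail to pin down $\sigma$. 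Making this rigidity precise --- controlling all $\sigma'$-stable objects from the three generators using the phase gap --- is the step I expect to demand the most care.
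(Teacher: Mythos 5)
The paper itself offers no proof of this proposition --- it is quoted verbatim from [Mac, Section~3] --- so there is nothing internal to compare against; I will assess your argument on its own terms. The determination of $Z$ from the basis $[E_1],[E_2],[E_3]$ is fine, and your construction does work on part of the parameter space, but it does not prove the proposition as stated.

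The gap is in the slicing. Defining $\mathcal P(\phi)$ as the extension closure of the $E_j[l]$ with $\phi_j+l=\phi$ yields a pre-stability condition only where a suitably shifted collection $\{E_1[a],E_2[b],E_3\}$ is Ext-exceptional, i.e.\ essentially on $\Theta^{\mathrm{Pure}}_{\mathcal E}$ of Definition~\ref{def: theta E} ($\phi_2-\phi_1\geq 1$ and $\phi_3-\phi_2\geq 1$). Elsewhere the Harder--Narasimhan axiom fails for your slicing, and the ``standard argument'' you invoke breaks at the reordering step. Concretely, take $\mathcal E=\{\cO,\cO(1),\cO(2)\}$ with $\phi_2-\phi_1<1$, and let $H$ be a line. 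The torsion sheaf $\cO_H(1)=\cone\big(\cO\to\cO(1)\big)$ sits in the triangle $\cO(1)\to\cO_H(1)\to\cO[1]$, whose factors have phases $\phi_2<\phi_1+1$, i.e.\ in \emph{increasing} order; the connecting map $\cO[1]\to\cO(1)[1]$ is a nonzero element of $\Hom^0(\cO,\cO(1))$, so the triangle neither splits nor reorders, and $\cO_H(1)$ is not an iterated extension of elementary objects of a single phase. It therefore has no HN filtration in your slicing. The same defect undermines your support-property and uniqueness steps, both of which rest on the claim that the only stable objects are the $E_j[l]$: by the paper's own Lemma~\ref{lemma:purepure} and Proposition~\ref{prop:commomareaofalggeo}, that holds only in $\Theta^{\mathrm{Pure}}_{\mathcal E}$, whereas on $\Theta^{\triangledown}_{\mathcal E}$ --- which still satisfies $\phi_3-\phi_1>1$ --- the stability condition is geometric and the skyscraper sheaves, among many others, are stable. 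Your parenthetical that the gap hypothesis ``excludes the geometric realizations'' is thus mistaken: on $\mathrm{MZ}_{\mathcal E}$ one has precisely $\phi_{s,q}(E_3)-\phi_{s,q}(E_1)>1$. What the cited result actually requires is to build the stability condition on the Ext-exceptional locus via the finite-length heart $\langle E_1[a],E_2[b],E_3\rangle$ with $a-b\geq 1$, $b\geq 1$, and then propagate it over the whole region $\phi_1<\phi_2<\phi_3$, $\phi_1+1<\phi_3$, tracking the new stable objects that appear as $\phi_2-\phi_1$ or $\phi_3-\phi_2$ drops below $1$; both existence and uniqueness outside the pure region need this additional analysis, which your proposal does not supply.
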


\begin{defn}
Given an exceptional triple $\mathcal E$ $=$ $\{E_1,E_2,E_3\}$ on
$\Db(\pp)$, we write $\Theta_{\mathcal E}$ as the space of all
stability conditions in Proposition \ref{prop:thetaE}.
$\Theta_{\mathcal E}$ is parametrized by
\[\{(m_1,m_2,m_3,\phi_1,\phi_2,\phi_3)\in (\mathbb R_{>0})^3\times
\mathbb R^3\text{ $\big |$
}\phi_1<\phi_2<\phi_3,\phi_1+1<\phi_3\}.\] We make the following
notations for some subsets of $\Theta_{\mathcal E}$.
\begin{itemize}
\item $\Theta^{\triangledown}_{\mathcal E}:=$ $\{ \sigma \in \Theta_{\mathcal E}$ $|$ $\phi_2-\phi_1\leq1$, $\phi_3-\phi_2\leq 1$ and $\phi_3-\phi_1\neq 2\}$;
\item $\Theta^{\Geo}_{\mathcal E}:=$ $\Theta_{\mathcal E}\cap $
$\Stab^{\Geo}$;
\item $\Theta^{\mathrm{Pure}}_{\mathcal E}:=$ $\{ \sigma \in \Theta_{\mathcal E}$ $|$ $\phi_2-\phi_1\geq1$ and $\phi_3-\phi_2\geq1\}$;
\item $\Theta^{+}_{\mathcal E,E_1}:=$ $\{ \sigma \in \Theta_{\mathcal E}$ $|$ $\phi_3-\phi_2<1\}$ $\setminus$ $\Theta^{\Geo}_{\mathcal E}$;
\item $\Theta^{-}_{\mathcal E,E_3}:=$ $\{ \sigma \in \Theta_{\mathcal E}$ $|$ $\phi_2-\phi_1<1\}$ $\setminus$ $\Theta^{\Geo}_{\mathcal
E}$.
\end{itemize}
We denote $\Stab^{\Alg}$ as the union of $\Theta_{\mathcal E}$ for all
exceptional triples on $\Db(\pp)$, and call it the algebraic
stability conditions. \label{def: theta E}
\end{defn}

\begin{lemma}
Let $\mathcal E$ $=$ $\{E_1,E_2,E_3\}$ be an exceptional triple, and
$\sigma$ be a stability condition in $\Theta^{Pure}_\mathcal E$. The
only $\sigma$-stable objects are $E_i[n]$ for $i=1,2,3$ and $n\in
\mathbb Z$. \label{lemma:purepure}
\end{lemma}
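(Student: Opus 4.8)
The plan is to reduce the statement to a problem about a single finite-length abelian heart, and then to exploit the fact that in the $\mathrm{Pure}$ regime every arrow of its Ext-quiver runs from a lower-phase simple to a higher-phase one.

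First I would pass to an algebraic heart. Following the construction of $\sigma$ in Proposition \ref{prop:thetaE} (i.e.\ \cite{Mac}), I choose integers $a=\lfloor\phi_3-\phi_1\rfloor$ and $b=\lfloor\phi_3-\phi_2\rfloor$, so that the shifted phases $\psi_1:=\phi_1+a$, $\psi_2:=\phi_2+b$, $\psi_3:=\phi_3$ all lie in $(\phi_3-1,\phi_3]$. The $\mathrm{Pure}$ inequalities $\phi_2-\phi_1\ge 1$ and $\phi_3-\phi_2\ge 1$ then force $b\ge 1$, $a\ge 2$, and $a-b\ge 1$. Setting $S_1:=E_1[a]$, $S_2:=E_2[b]$, $S_3:=E_3$, the collection $\{S_1,S_2,S_3\}$ is Ext-exceptional, so by \cite{Mac} Lemma 3.16 its extension closure $\mathcal A$ is a finite-length heart, equal to the slice $\mathcal P\big((\phi_3-1,\phi_3]\big)$ of $\sigma$, with simple objects exactly $S_1,S_2,S_3$. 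Every $\sigma$-stable object thus lies, up to shift, in $\mathcal A$ and is $\sigma$-stable if and only if it is stable in $\mathcal A$ for the phase function $\phi$. It therefore suffices to prove that the only stable objects of $\mathcal A$ are $S_1,S_2,S_3$, since then the $\sigma$-stable objects are precisely the $S_i[m]=E_i[n]$.

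Next I would read off the Ext-quiver. Writing $\mathrm{sh}_1=a$, $\mathrm{sh}_2=b$, $\mathrm{sh}_3=0$, one has $\Ext^k_{\mathcal A}(S_i,S_j)=\Hom^{\,k+\mathrm{sh}_j-\mathrm{sh}_i}(E_i,E_j)$. Since $\mathcal E$ is a \emph{strong} exceptional collection, these groups are concentrated in a single degree and vanish whenever $i\ge j$; hence all arrows point $i\to j$ with $i<j$, there are no loops, $S_3$ is the unique sink and $S_1$ the unique source, and $\Ext^1(S_1,S_3)=0$ because $a\ge 2$. The decisive point is that an arrow $S_i\to S_j$, i.e.\ a non-split extension $0\to S_j\to E\to S_i\to 0$, forces $\mathrm{sh}_i-\mathrm{sh}_j=1$, so that $\psi_j-\psi_i=(\phi_j-\phi_i)-1\ge 0$. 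In other words, in every non-split extension in $\mathcal A$ the subobject has phase at least that of the quotient; this is exactly where the $\mathrm{Pure}$ hypotheses are used.

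Finally I would run a sink-destabilization argument. Because all $\psi_i$ lie in an interval of length $<1$, the vectors $e^{i\pi\psi_i}$ lie in an open half-plane, so any nonzero $F\in\mathcal A$ satisfies $\phi(F)\le\psi_3$, with equality only if every Jordan--H\"older factor of $F$ is $S_3$. Suppose $F$ is stable and not simple. As vertex $3$ is a sink, $S_3$ is a subobject of $F$ whenever it is a composition factor; then $S_3\hookrightarrow F$ has phase $\psi_3\ge\phi(F)$, contradicting stability, so $S_3$ is not a factor and $F\in\langle S_1,S_2\rangle$. If there is no arrow $S_1\to S_2$ then $\langle S_1,S_2\rangle$ is semisimple and $F$ must be simple; if there is one, then $\psi_2\ge\psi_1$ and $S_2$ is again a sink of maximal phase, so the same argument forces $S_2$ not to occur and $F=S_1$. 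In every case $F$ is simple, proving the lemma. (Boundary coincidences such as $\psi_1=\psi_3$ produce only strictly semistable objects and do not affect the conclusion.) I expect the genuine obstacle to be the first step---justifying that every $\sigma$-stable object lives, up to shift, in this one finite-length heart with the $S_i$ as its simples---since it rests on the Ext-exceptional-collection machinery of \cite{Mac}; once that is granted, the acyclicity of the Ext-quiver together with the phase-monotonicity of its arrows makes the rest routine.
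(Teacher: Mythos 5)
Your proof is correct and follows essentially the same route as the paper: both reduce, via Macr\`i's Ext-exceptional-collection lemma, to the finite-length heart $\langle E_1[a],E_2[b],E_3\rangle$ with $b\geq 1$, $a-b\geq 1$, and then use the Pure inequalities to show any non-simple object admits a destabilizing simple subobject. The only difference is organizational --- the paper runs an explicit case split on the shifts ($b\geq 2$ vs.\ $b=1$, $a-b\geq 2$ vs.\ $a=b+1$), splitting off $E_3$ by Ext-vanishing when the shifts are large, whereas you package all cases uniformly through the sink structure of the Ext-quiver --- but the ingredients and the phase comparisons are identical.
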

\begin{proof}
Let $F$ be a $\sigma$-stable object, we may assume it is in the
heart $\langle E_1[a],E_2[b],E_3\rangle$, where $a,b$ $\in$ $\mathbb
Z$ such that $b\geq 1$ and $a-b\geq 1$. When $b\geq 2$, as
Ext$^1(E_3,-)$ and Ext$^1(-,E_3)$ are $0$ for any other generators,
$F$ is either $E_3$ or belongs to $\langle E_1[a],E_2[b]\rangle$. If
$a-b\geq 2$, then $F$ is either $E_1[a]$, or $E_2[b]$. Else we have
$a=b+1$, and we have the sequence $E_2^{\oplus n_2}[b]\rightarrow
F\rightarrow E_1^{\oplus n_2}[b+1]$ in the heart. As $\phi(E_2[b])\geq\phi(E_1[b+1])$, $F$ is not $\sigma$-stable unless $F$ is
$E_2[b]$ or $E_1[a]$.

We may therefore assume $a=2$ and $b=1$, then $F$ is in the form of
$E_1^{\oplus n_1}\rightarrow E_2^{\oplus n_2}\rightarrow E_3^{\oplus
n_3}$. As $\phi_3\geq\phi_2+1\geq \phi_1+2$, $F$ is either $E_3$, $E_2[1]$ or
$E_1[2]$.
\end{proof}

\subsection{Common areas of geometric and algebraic stability conditions}

Let  $\mathcal E$ $=$ $\{E_1,E_2,E_3\}$ be an exceptional triple,  in this section, we will explain how does the algebraic part $\Theta_\mathcal E$ `glue' on to Stab$^{\Geo}$. We denote TR$_{\mathcal E}$ as the triangle region on $\cccp$ bounded anti-clockwise by line segments $l_{e_1e_2}$,
$l_{e_2e_3}$ and $l_{e_3e_1}$ (the edges $l_{e_1e_2}$,
$l_{e_2e_3}$ are defined to be in the TR$_{\mathcal E}$, the three vertices are not).We denote MZ$_{\mathcal E}$ as the open region on $\cccp$ bounded anti
clockwise by line segments $l_{e_1e^+_1}$,
$l_{e^+_1e_2}$, $l_{e_2e^+_3}$,
 $l_{e^+_3e_3}$ and $l_{e_3e_1}$.

\begin{prop}
Let $\mathcal E$ be an exceptional triple, then we have:
\begin{enumerate}
\item $\Theta^{\triangledown}_{\mathcal E} = \glt\cdot \{\sigma_{s,q}\in\Stab^{\Geo}(\pp)\; |\; (1,s,q)$ $\in$ $\mathrm{TR}_{\cE}\}$. In particular,
$\Theta^{\triangledown}_{\cE}$ is in $\Theta^{\Geo}_{\mathcal
E}$.
\item $\Theta^{\Geo}_{\mathcal E} = \glt\cdot \{\sigma_{s,q}\in\Stab^{\Geo}(\pp) \; | \; (1,s,q)\in
\mathrm{MZ}_{\cE} \}$.
\end{enumerate}
\label{prop:commomareaofalggeo}
\end{prop}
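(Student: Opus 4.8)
The plan is to reduce both equalities to a statement about the single point $P=\Ker Z$ in the $\cccp$, reading off every phase condition from the position of $P$. First I would observe that both sides of each equality are $\glt$-invariant: the inequalities defining $\Theta^{\triangledown}_{\cE}$ and $\Theta^{\Geo}_{\cE}$ only record, for each consecutive pair, whether $\phi_{i+1}-\phi_i$ is $<1$, $=1$ or $>1$, together with whether $\phi_3-\phi_1=2$, and since an element of $\glt$ covers an orientation-preserving linear map it preserves the linear dependence and the orientation of the vectors $Z(E_i)$, hence preserves these counts (so each $\Theta_{\cE}$ is mapped to itself). By Corollary \ref{cor:geost} the right-hand sides are, modulo $\glt$, parametrised by $P\in\Geo_{LP}$, so it is enough to decide for each such $P$ whether $\sigma_P\in\Theta_{\cE}$ and which phase inequalities hold. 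The shift bookkeeping is governed by the vertical segments $l_{e_1e^+_1}$, $l_{e^+_3e_3}$ sitting at $s=\mu(E_1)$, $s=\mu(E_3)$: the exceptional object $E_i$ enters the heart $\Coh_P$ unshifted when $\mu(E_i)>s$ and as $E_i[1]$ when $\mu(E_i)\le s$.

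For the inclusions of geometric into algebraic I would fix $P\in\mathrm{MZ}_{\cE}$. \emph{Stability:} for such $P$ each segment $l_{e_iP}$ stays inside $\Geo_{LP}$ — this is exactly where the pentagonal shape of $\mathrm{MZ}_{\cE}$, lying above $C_{LP}$, is used — so Corollary \ref{cor:regionofEstab} shows that each $E_i$, in the shift prescribed above, is $\sigma_P$-stable. \emph{Phases:} by Remark \ref{rem:anglephase} and Lemma \ref{lemma:slopecompare} the slicing phase $\phi_P(E_i)$ is the angle at $P$ between $l_{P-}$ and the ray towards $e_i$ (corrected by the shift); since $\mu(E_1)<\mu(E_2)<\mu(E_3)$ and $P$ lies below the edge $l_{e_3e_1}$, these angles occur in the cyclic order giving $\phi_1<\phi_2<\phi_3$ and $\phi_3-\phi_1>1$, i.e. $\phi_1+1<\phi_3$. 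Hence $\sigma_P\in\Theta_{\cE}$, and being geometric it lies in $\Theta^{\Geo}_{\cE}$; by $\glt$-invariance this gives $\glt\cdot\{\sigma_{s,q}:P\in\mathrm{MZ}_{\cE}\}\subseteq\Theta^{\Geo}_{\cE}$, and its restriction to $\mathrm{TR}_{\cE}$ lands in $\Theta^{\triangledown}_{\cE}$ once the half-turn inequalities are checked as below.

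The decisive computation for part (1) is a convex-hull criterion. Writing a generator of $\Ker Z$ as $v=\sum_j a_j\tilde v(E_j)$ — the $E_j$ being a basis of $\KK_{\R}(\pp)$ — the equation $Z(v)=0$ becomes $\sum_j (a_jm_j)\exp(i\pi\phi_j)=0$; because every $\ch_0(E_j)>0$, the point $P=\Ker Z$ lies in the closed triangle spanned by $e_1,e_2,e_3$ iff this holds with all $a_j\ge0$, that is, iff the origin lies in the convex hull of the three unit vectors $\exp(i\pi\phi_j)$. For distinct arguments within one turn this happens exactly when every gap is at most a half-turn, i.e. $\phi_2-\phi_1\le1$, $\phi_3-\phi_2\le1$ and $\phi_3-\phi_1\ge1$; together with the standing $\phi_1+1<\phi_3$ and $\phi_3-\phi_1\ne2$ these are precisely the defining conditions of $\Theta^{\triangledown}_{\cE}$ (the excluded edge $l_{e_3e_1}$ and the excluded vertex matching the strict and the degenerate full-turn cases). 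This yields $\Theta^{\triangledown}_{\cE}=\glt\cdot\{\sigma_{s,q}:P\in\mathrm{TR}_{\cE}\}$; it also proves the ``in particular'' clause, since $\overline{\mathrm{TR}_{\cE}}\subseteq\Geo_{LP}$ lets me realise the same central charge as a genuine $\sigma_P$ and invoke the uniqueness of Proposition \ref{prop:thetaE} to identify $\sigma$ with $\sigma_P$, hence geometric. For the reverse inclusion of part (2) I take $\sigma\in\Theta^{\Geo}_{\cE}$, write $\sigma=g\cdot\sigma_P$ with $P\in\Geo_{LP}$, and use that all $E_i$ stay $\sigma_P$-stable with $\phi_1<\phi_2<\phi_3$, $\phi_1+1<\phi_3$ to force $P\in\mathrm{MZ}_{\cE}$: here the collinearity relations recalled before Proposition \ref{prop:thetaE} identify the loci $\phi_P(E_1)=\phi_P(E_2)$ and $\phi_P(E_2)=\phi_P(E_3)$ with the lines $\chi(E_3,-)=0$ and $\chi(-,E_1)=0$, whose segments $l_{e_2e^+_3}$ and $l_{e^+_1e_2}$, together with $l_{e_3e_1}$ and the shift-flip segments $l_{e_1e^+_1}$, $l_{e^+_3e_3}$, are exactly the five edges of $\mathrm{MZ}_{\cE}$.

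The hard part will be the phase step of the forward direction together with the edge analysis of part (2): I must keep the ordering $\phi_1<\phi_2<\phi_3$ correct as $P$ crosses the shift-flip lines $l_{e_1e^+_1}$, $l_{e^+_3e_3}$, where the heart-representative of $E_i$ jumps by $[1]$ so that $\phi_P(E_i)$ is continuous as a slicing phase but discontinuous as a raw angle, and I must verify that all three segments $l_{e_iP}$ remain inside $\Geo_{LP}$ over the \emph{non-convex} pentagon $\mathrm{MZ}_{\cE}$ and not merely over the convex triangle $\overline{\mathrm{TR}_{\cE}}$. The convex-hull argument settles $\mathrm{TR}_{\cE}$ cleanly, so the real content is concentrated in the two flaps $\mathrm{MZ}_{\cE}\setminus\overline{\mathrm{TR}_{\cE}}$, where exactly one of $\phi_2-\phi_1$, $\phi_3-\phi_2$ exceeds $1$ while the configuration is still geometric, and where the identification of the bounding edges with the $\chi$-orthogonality lines is what closes the argument.
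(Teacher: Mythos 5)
Your treatment of part (2) is essentially the paper's: stability of the $E_i$ over $\MZ_{\cE}$ via Corollary \ref{cor:regionofEstab} and the collinearity of $e^+_1,e_2,e_3$ (resp.\ $e^+_3,e_2,e_1$), the phase inequalities via Lemma \ref{lemma:slopecompare}, and the converse by checking that one inequality fails off $\MZ_{\cE}$. Where you genuinely diverge is part (1). The paper only \emph{asserts} that $\Ker$ maps $\Theta^{\triangledown}_{\cE}$ into $\TR_{\cE}$ and then closes the argument topologically: the composition $\bigl(\Ker^{-1}(\TR_\cE)\cap\Stab^{\Geo}\bigr)/\glt\hookrightarrow\Theta^{\triangledown}_{\cE}/\glt\xrightarrow{\Ker}\TR_\cE$ is an isomorphism, $\Ker$ is a local homeomorphism, and $\Theta^{\triangledown}_{\cE}$ is path connected, so the inclusion is onto. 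You instead prove the kernel statement by the convex-hull computation $\sum_j a_jm_j e^{i\pi\phi_j}=0$ with $\ch_0(E_j)>0$, and then identify $\sigma$ with $g\cdot\sigma_P$ pointwise via the uniqueness clause of Proposition \ref{prop:thetaE}. Your route buys an explicit proof of the step the paper leaves implicit and avoids invoking connectedness; the paper's route avoids the phase-matching bookkeeping (choosing the lift $g\in\glt$ so that the integer parts of the phases agree) that your uniqueness step requires.

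Two small cautions. First, the convex-hull criterion only sees the $\phi_j$ modulo $2$, so by itself it characterizes $\Ker Z\in\overline{\TR}_\cE$ only under the standing hypothesis $\phi_3-\phi_1<2$; e.g.\ $(\phi_1,\phi_2,\phi_3)=(0,\tfrac32,\tfrac52)$ lies in $\Theta^{\mathrm{Pure}}_{\cE}$ yet has kernel on $l_{e_2e_3}\subset\TR_{\cE}$. So membership in $\Theta^{\triangledown}_{\cE}$ is \emph{not} determined by the point $\Ker Z$ alone, contrary to your opening sentence; your actual argument is fine because in each direction you feed in either the $\triangledown$-inequalities or geometricity (all $E_i$ in one tilted heart forces $\phi_3-\phi_1<2$) before applying the criterion, but this should be said explicitly. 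Second, $\overline{\TR}_{\cE}\not\subset\Geo_{LP}$: the vertices $e_i$ lie on the segments $l_{e_ie_i^+}$ and are excluded from $\Geo_{LP}$. You only need $\TR_{\cE}\subset\MZ_{\cE}\subset\Geo_{LP}$ (with the paper's convention that $\TR_{\cE}$ omits the vertices and the edge $l_{e_3e_1}$), which is what the excluded edge and vertices of the convex-hull analysis deliver anyway.
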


\begin{proof}
We first prove the second statement. As MZ$_{\mathcal E}$ is contained in $\Geo_{LP}$, by Corollary
\ref{cor:regionofEstab}, $E_2$ is $\sigma_{s,q}$-stable for any
point $(1,s,q)$ in MZ$_{\mathcal E}$. As $e^+_1,e^r_1,e_2,e_3$ are
collinear on the line of $\chi(-,E_1)=0$, for any point $P$ in
MZ$_\mathcal E$, $l_{EP}$ is contained in $\Geo_{LP}$. By Corollary
\ref{cor:regionofEstab}, $E_3$ is stable in MZ$_{\mathcal E}$. For
the same reason, $E_1$ is stable MZ$_{\mathcal E}$.

For any $(1,s,q)$ in MZ$_{\mathcal E}$, $E_3$ and $E_1[1]$ are in
the heart of Coh$_{\#s}$. By Lemma \ref{lemma:slopecompare},
$\phi_{s,q}(E_1[1])<\phi_{s,q}(E_3)$, hence
\[\phi_{s,q}(E_3)-\phi_{s,q}(E_1) >1.\]
When $s\geq \mu(E_2)$, $E_3$ and $E_2[1]$ are in the heart
Coh$_{\#s}$, we have \[\phi_{s,q}(E_3)-\phi_{s,q}(E_2)>0.\] As
$(1,s,q)$ is above $l_{e_1e_2}$, by Lemma \ref{lemma:slopecompare},
we also have \[\phi_{s,q}(E_2)-\phi_{s,q}(E_1)>0.\] When
$s<\mu(E_2)$, by a similar argument we also have the same
inequalities for $\phi_{s,q}(E_i)$'s. By Proposition
\ref{prop:thetaE}, we get the embedding
\[\Ker^{-1}(\MZ_\mathcal E)\cap\text{Stab}^{\Geo}
\hookrightarrow \Theta_\mathcal E\cap\text{Stab}^{nd}
\xrightarrow{\Ker} \pkp.
\] 
For $(1,s,q)$ outside the area MZ$_\mathcal E$, we have either
$\phi_{s,q}(E_3)$ $-$ $\phi_{s,q}(E_1)$ $\leq$ $1$, $\phi_{s,q}(E_2)$ $\leq$ $\phi_{s,q}(E_1)$, or $\phi_{s,q}(E_3)$ $\leq $ $\phi_{s,q}(E_2)$. Because either $E_1$ and $E_3$
are in the same heart (when $s>\mu(E_3)$ or $s\leq \mu(E_1)$), or
the slope $\phi_{s,q}(E_1[1])$ is greater than $\phi_{s,q}(E_3)$; or both $E_1[1]$ and $E_2[1]$ are in $\Coh_{\#s}$ but $(1,s,q)$ is below $l_{e_1e_2}$; or both $E_2$ and $E_3$ are in $\Coh_{\#s}$ but $(1,s,q)$ is below $l_{e_2e_3}$.
Hence $\sigma_{s,q}$ is not contained in $\Theta_\mathcal E$, this
finishes the second statement of the proposition.

For the first statement, since $\phi_3-\phi_1$ is not an integer,
$\Theta^{\triangledown}_{\mathcal E}$ $\in$ Stab$^{\mathrm{nd}}$. The image
of Ker$\big(\Theta^{\triangledown}_{\mathcal E}\big)$ is in
Tr$_\mathcal E$. By the previous argument, we also have the embedding
\[\big(\Ker^{-1}(\TR_\cE)\cap\Stab^{\Geo}\big)/\glt
\hookrightarrow \Theta^{\triangledown}_{\mathcal
E}/\glt \xrightarrow{\text{Ker}}
\TR_\mathcal E \subset \pkp.
\]
The map $\Ker$ is local homeomorphism and the composition is an
isomorphism. Since $\Theta^{\triangledown}_{\mathcal E}$ is path
connected, the two maps are both isomorphism. We get the first
statement of the proposition.
\end{proof}

\section {Cell decomposition and contractibility}

\subsection{Neighbor cells of geometric stability conditions}
\begin{propd}[Definition of $\Theta^\pm_E$] Given exceptional triples $\mathcal E$ and $\mathcal E'$ on
$\Db(\pp)$ with the same $E_1$ $=$ $E'_1$ $=$ $E$, then
$\Theta^{+}_{\mathcal E,E_1}$ $=$ $\Theta^{+}_{\mathcal E',E'_1}$.
We denote \emph{this subspace} by $\Theta^+_{E}$. In a similar way, we have
the subspace $\Theta^-_{E}$. \label{prop:defoflegs}
\end{propd}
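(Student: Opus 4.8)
The plan is to view $(E_2,E_3)$ and $(E_2',E_3')$ as two full exceptional collections of one and the same subcategory. Writing $\mathcal B:={}^{\perp}\langle E\rangle=\{X:\Hom^\bullet(X,E)=0\}$, which depends only on $E=E_1=E_1'$, fullness of the triples forces both $\{E_2,E_3\}$ and $\{E_2',E_3'\}$ to be full exceptional collections of $\mathcal B$ (one has the semiorthogonal decomposition $\Db(\pp)=\langle\mathcal B,\langle E\rangle\rangle$, and $E_2,E_3,E_2',E_3'$ all lie in $\mathcal B$). Any two full exceptional collections of a category carrying one of length two are related by a finite sequence of mutations, so it suffices to prove $\Theta^{+}_{\mathcal E,E_1}=\Theta^{+}_{\mathcal E',E_1}$ when $\mathcal E'$ arises from $\mathcal E$ by a single mutation of the pair $(E_2,E_3)$, say $\mathcal E'=\{E,E_3,R_{E_3}E_2\}$, and then to iterate along such a chain. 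Because the construction of Proposition \ref{prop:thetaE} (following \cite{Mac}) applies to arbitrary Ext-exceptional triples, the intermediate collections, which need not be strong collections of sheaves, cause no difficulty.

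For the single step, take $\sigma\in\Theta^{+}_{\mathcal E,E_1}$, so that $E,E_2,E_3$ are $\sigma$-stable with $\phi_1<\phi_2<\phi_3$, $\phi_1+1<\phi_3$, $\phi_3-\phi_2<1$, and $\sigma$ is not geometric. Consider the mutation triangle
\[ E_2\longrightarrow \Hom^\bullet(E_2,E_3)^{\vee}\otimes E_3\longrightarrow R_{E_3}E_2\longrightarrow E_2[1]. \]
Since $\phi(E_3)=\phi_3<\phi_2+1=\phi(E_2[1])$, the standard phase inequalities for a triangle in a slicing place all Harder--Narasimhan phases of $R_{E_3}E_2$ in the interval $[\phi_3,\phi_2+1]$, whose length is $1-(\phi_3-\phi_2)<1$. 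The essential input is that $R_{E_3}E_2$ is in fact $\sigma$-stable: when $\phi_3-\phi_2<1$ the pair $(E_2,E_3)$ spans a length-two heart on which $\sigma$ restricts, and $R_{E_3}E_2$ is a shift of the mutated simple object of that heart, hence stable. Granting this, put $\phi_3':=\phi(R_{E_3}E_2)\in(\phi_3,\phi_2+1)$.

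The triple $\mathcal E'=\{E,E_3,R_{E_3}E_2\}$ then has phases $\phi_1'=\phi_1$, $\phi_2'=\phi_3$, $\phi_3'$, which satisfy $\phi_1'<\phi_2'<\phi_3'$ and $\phi_1'+1<\phi_3'$ (the latter because $\phi_3'>\phi_3>\phi_1+1$); hence $\sigma\in\Theta_{\mathcal E'}$. Moreover $\phi_3'-\phi_2'=\phi_3'-\phi_3<1$, and, since being geometric is a property of $\sigma$ alone and independent of the chosen triple, $\sigma\notin\Stab^{\Geo}(\pp)\supseteq\Theta^{\Geo}_{\mathcal E'}$. Thus $\sigma\in\Theta^{+}_{\mathcal E',E_1}$, giving $\Theta^{+}_{\mathcal E,E_1}\subseteq\Theta^{+}_{\mathcal E',E_1}$; applying the same reasoning to the inverse (left) mutation $R_{E_3}E_2\mapsto E_2$ yields the opposite inclusion, and chaining over the mutation sequence linking $\mathcal E$ to $\mathcal E'$ completes the proof.

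The one genuinely substantial point is the $\sigma$-stability of the mutated object $R_{E_3}E_2$ under the hypothesis $\phi_3-\phi_2<1$; once this is in hand, every remaining step is elementary phase bookkeeping using the defining inequalities of $\Theta_{\mathcal E}$ and the triangle phase bounds. I expect to secure this stability through the finite-length Ext-exceptional heart of \cite{Mac}, in which the mutated object appears as a simple and is therefore automatically stable; a direct alternative is a short argument showing that any putative destabilizing subobject would violate the phase bound just derived. A secondary care point is the reduction itself: one must phrase the construction for Ext-exceptional triples throughout, so that the intermediate mutations in the connecting chain remain admissible even when they leave the class of strong exceptional collections of sheaves.
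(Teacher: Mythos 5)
Your overall route is the same as the paper's: reduce to a single mutation of the pair $(E_2,E_3)$ inside ${}^{\perp}E$, invoke Macri's result that the mutated object stays stable with controlled phase (this is exactly \cite{Mac} Proposition 3.17, which the paper cites; your heuristic that $R_{E_3}E_2$ is ``a simple of the length-two heart'' is not literally right --- the simples of $\langle E_2[1],E_3\rangle$ are $E_2[1]$ and $E_3$ themselves --- but the stability claim itself is the correct input and you flag it as the point to be secured). The forward inclusion $\Theta^{+}_{\mathcal E,E_1}\subseteq\Theta^{+}_{\mathcal E',E_1}$ is correct as you wrote it: there $\phi_2'-\phi_1'=\phi_3-\phi_1>1$ comes for free from $\phi_1+1<\phi_3$.

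The gap is in the sentence ``applying the same reasoning to the inverse (left) mutation yields the opposite inclusion.'' It does not, as stated. Starting from $\sigma\in\Theta^{+}_{\mathcal E',E_1'}$ you get $E_2=L_{E_3}(R_{E_3}E_2)$ stable with $\phi_2\in(\phi_3'-1,\phi_2')$, and you must verify the defining inequality $\phi_1+1<\phi_3=\phi_2'$ of $\Theta_{\mathcal E}$, i.e.\ $\phi_1'+1<\phi_2'$. But the defining inequalities of the leg of $\mathcal E'$ only give $\phi_1'+1<\phi_3'<\phi_2'+1$, hence $\phi_1'<\phi_2'$ and nothing more; the mutation correspondence is genuinely asymmetric and identifies $\Theta_{\mathcal E}(\phi_3-\phi_2<1)$ only with the sublocus $\Theta_{\mathcal E'}(\phi_2'-\phi_1'>1,\,\phi_3'-\phi_2'<1)$. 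To close this you must rule out $\phi_2'-\phi_1'\leq 1$ for points of the leg, and that is not phase bookkeeping: if $\phi_2'-\phi_1'\leq1$ and $\phi_3'-\phi_2'<1$ then $\sigma\in\Theta^{\triangledown}_{\mathcal E'}$, which by the first statement of Proposition \ref{prop:commomareaofalggeo} lies in $\Stab^{\Geo}(\pp)$ and is therefore already excluded from $\Theta^{+}_{\mathcal E',E_1'}$. This appeal to $\Theta^{\triangledown}_{\cE'}\subseteq\Stab^{\Geo}$ is exactly the step the paper makes explicit (its displayed identification $\Theta_{\mathcal E}(\phi_2-\phi_1>1,\phi_3-\phi_2<1)\rightleftharpoons\Theta_{\mathcal E'}(\phi_3'-\phi_2'<1)$ followed by the identity $\Theta_{\mathcal E}(\phi_2-\phi_1>1,\phi_3-\phi_2<1)\setminus\Stab^{\Geo}=\Theta^{+}_{\mathcal E,E_1}$); without it the reverse inclusion is unproved. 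Once you insert this observation your argument is complete and coincides in substance with the paper's.
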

\begin{proof}
Let the three objects in $\mathcal E$ ($\mathcal E'$) be $E$, $E_2$,
$E_3$ ($E$, $E'_2$, $E'_3$). By \cite{GorRu}, $E'_2$, $E'_3$ is
constructed from $E_2$, $E_3$ by consecutive left or right
mutations. Without loss of generality, we may assume ($E'_2$,
$E'_3$) is just ($L_{E_2}E_3$, $E_2$).

By \cite{Mac} Proposition 3.17, at a point $(\overrightarrow{m},
\overrightarrow{\phi})$ in $\Theta_\mathcal E$, when
$\phi_3<\phi_2+1$, $L_{E_2}E_3$ is stable at the point and
its phase satisfies \[\phi_3-1<\phi(L_{E_2}E_3)<\phi_2.\] 
On the other hand, at
a point $(\overrightarrow{m'}, \overrightarrow{\phi'})$ in
$\Theta_{\mathcal E'}$, when $\phi'_3<\phi'_2+1$, $R_{E'_3}E'_2$
($=$ $E_3$) is stable at this point and \[\phi'_3<\phi(E_3)<\phi'_2+1.\] 
Therefore, the left and right mutation identify the
following two subsets in $\Theta_\mathcal E$ and $\Theta_{\mathcal
E'}$.
\begin{equation}\Theta_{\mathcal E}(\phi_2-\phi_1>1,\phi_3-\phi_2<1)\rightleftharpoons\Theta_{\mathcal E'}(\phi'_3-\phi'_2<1)\label{eq2}\end{equation}
Now by the first statement of Proposition \ref{prop:commomareaofalggeo}, \[\Theta_{\mathcal
E}(\phi_2-\phi_1>1,\phi_3-\phi_2<1)\setminus  \Stab^{\Geo} =
\Theta_{\mathcal E}(\phi_3-\phi_2<1)\setminus (\Stab^{\Geo}\cup
\Theta^{\triangledown}_{\mathcal E}) = \Theta^{+}_{\mathcal
E,E_1}.\] By taking off $\Stab^{\Geo}$ on both sides of (\ref{eq2}), we get $\Theta^{+}_{\mathcal E,E_1} =
\Theta^{+}_{\mathcal E',E_1}$. The $\Theta_E^-$ case is proved in
the same way.
\end{proof}

\begin{rem}
$\Theta^-_{E}$ is a chamber that the skyscraper sheaf $k(x)$ is
destabilized by $E$. $\Theta^+_{E}$ is a chamber that the skyscraper
sheaf $k(x)$ is co-destabilized by $E[1]$. We may also use the
notation $\Theta^{+}_{\mathcal E,E_1}$ in some situations, since it has the chart induced
from $\Theta_{\mathcal E}$. \label{rem:wallofgeo}
\end{rem}

\subsection{Boundary of geometric stability conditions}
\begin{lemma}
Let $E$ and $F$ be two exceptional bundles such that $\mu(E)$ $<$
$\mu(F)$, then $E$ is not stable under any stability condition in
$\Theta_{F}^+$ and $F$ is not stable under any stability condition
in $\Theta_{E}^-$. \label{lemma:stabareaofexcobj}
\end{lemma}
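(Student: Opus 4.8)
The plan is to deduce the two statements from each other by duality, and then to pin down the $\sigma$-stable objects on a single leg by a decoupling argument in its algebraic heart.

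First I would reduce to one assertion. The derived dual $\mathbb{D}=(-)^{\vee}$ is a contravariant autoequivalence of $\Db(\pp)$ that sends exceptional bundles to shifts of exceptional bundles, reverses slopes ($\mu(\mathbb{D}G)=-\mu(G)$), reverses the order of an exceptional triple, and carries a stability condition to one with the same stable objects (dualized and reflected in phase). Under $\mathbb{D}$ a triple $\cE=\{E_1,E_2,E_3\}$ goes to $\{\mathbb{D}E_3,\mathbb{D}E_2,\mathbb{D}E_1\}$, the leg $\Theta^{+}_{\cE,E_1}$ goes to $\Theta^{-}_{\mathbb{D}\cE,\mathbb{D}E_1}$, and the hypothesis $\mu(E)<\mu(F)$ becomes $\mu(\mathbb{D}F)<\mu(\mathbb{D}E)$. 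Hence ``$F$ is not stable on $\Theta^{-}_{E}$'' is exactly the $\mathbb{D}$-image of ``$\mathbb{D}F$ is not stable on $\Theta^{+}_{\mathbb{D}E}$'', and it suffices to prove: if $\mu(E)<\mu(F)$ then $E$ is not stable on $\Theta^{+}_{F}$.

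Now fix $\sigma\in\Theta^{+}_{F}$. By Proposition \ref{prop:defoflegs} I may realise $\sigma$ inside some $\Theta_{\cE}$ with $\cE=\{F,E_2,E_3\}$, $F=E_1$, phases $\phi_1<\phi_2<\phi_3$, $\phi_2-\phi_1>1$, $\phi_3-\phi_2<1$, and $\mu(E)<\mu(F)=\mu(E_1)<\mu(E_2)<\mu(E_3)$. The key step is a decoupling in the algebraic heart $\mathcal{A}=\langle F[a],E_2,E_3\rangle$ underlying $\sigma$: when the gap is large, so $a\ge 2$, strongness gives $\Ext^{1}_{\mathcal{A}}(F[a],E_i)=\Ext^{1-a}(F,E_i)=0$ and $\Ext^{1}_{\mathcal{A}}(E_i,F[a])=\Ext^{a+1}(E_i,F)=0$ (the latter because $\Hom^{\bullet}(E_i,E_1)=0$ for $i>1$), and the $\Hom$'s vanish as well; thus $F[a]$ splits off of $\mathcal{A}$, exactly as $E_3$ does in the proof of Lemma \ref{lemma:purepure}. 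Consequently every $\sigma$-stable object is, up to shift, either $F$ itself or an object of the left-orthogonal subcategory ${}^{\perp}F=\{X\mid \Hom^{\bullet}(X,F)=0\}=\langle E_2,E_3\rangle$. Since $\mu(E)<\mu(F)$ excludes $E\cong F$, it remains to show $E\notin {}^{\perp}F$, i.e.\ $\Hom^{\bullet}(E,F)\neq 0$. This is where I use \cite{GorRu}: every exceptional pair on $\pp$ is slope-ordered (mutations preserve the ordering of $\{\cO,\cO(1)\}$), so $\mu(E)<\mu(F)$ forbids $\{F,E\}$ from being an exceptional pair, whence $\Hom^{\bullet}(E,F)\neq 0$; combined with the slope vanishing $\Ext^{2}(E,F)=\Hom(F,E(-3))^{\vee}=0$ this forces a nonzero $\Hom(E,F)$ or $\Ext^{1}(E,F)$, destabilising $E$.

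The main obstacle is the shallow part of the leg, $\phi_2-\phi_1\in(1,2)$, where necessarily $a=1$ and $\Ext^{1}_{\mathcal{A}}(F[1],E_i)=\Hom(E_1,E_i)\neq 0$, so $F[1]$ does not split off and genuine extensions enter the heart; moreover one cannot always mutate into a presentation with gap $\ge 2$ near the boundary with $\Stab^{\Geo}$. I would treat this regime by continuity from the geometric side: along the boundary where $\Theta^{+}_{F}$ meets $\Theta^{\triangledown}_{\cE}\subset\Stab^{\Geo}$ the condition $\sigma$ is a $\sigma_{P}$ with $P$ near $v(F)$, and Corollary \ref{cor:regionofEstab} together with its remark already exhibits $E$ as unstable there, since the segment $l_{EP}$ from $v(E)$ (which lies to the left of $v(F)$) towards $P$ crosses the destabilising spike of $F$. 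The remaining work is to check that no wall re-stabilises $E$ as one moves from this boundary into the shallow leg and then into the deep leg handled above; since $\Theta^{+}_{F}$ is connected and the only candidate destabiliser is the fixed nonzero morphism between $E$ and $F$, I expect the instability to persist throughout, completing the proof.
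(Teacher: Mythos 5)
Your proposal has a genuine gap, and it is the one you flag yourself at the end. The decoupling argument in the algebraic heart only disposes of the \emph{deep} part of the leg: writing the heart correctly as $\langle F[a],E_2[1],E_3\rangle$ (the middle object must be shifted for the collection to be Ext-exceptional), the first object splits off only when $a-1\geq 2$, i.e.\ only once $\phi_2-\phi_1>2$; in the shallow band $\phi_2-\phi_1\in(1,2)$ one has $a=2$ and $\Ext^1_{\mathcal A}(F[2],E_2[1])=\Hom(F,E_2)\neq 0$, so $F$ does not decouple and stable objects need not lie in $\langle F\rangle\cup\langle E_2,E_3\rangle$. Your treatment of that band --- ``no wall re-stabilises $E$ \dots I expect the instability to persist'' --- is an expectation, not an argument: a priori the numerical wall $\phi(E)=\phi(F)$ could cross the shallow leg, and connectedness of $\Theta^+_F$ alone does not rule this out. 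Since the shallow band is an open subset of the leg adjacent to the geometric boundary, this is not a removable edge case but the heart of the matter.

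The paper's proof is built precisely to close this hole. It chooses the auxiliary triple so that \emph{two} non-vanishing Hom's are available, $\Hom(E,F)\neq 0$ and $\Hom(F,E_1[2])\neq 0$, which trap the phase of the allegedly stable object and force the stability condition into the non-degenerate region $\Theta_{\mathcal E}(\phi_3-\phi_1<2,\ \phi_3-\phi_2>1)$. There the kernel of the central charge can be tracked on the projective $\cccp$: the wall $\phi(E)=\phi(F)$ is identified with an explicit segment $l_{EQ}$, and the side on which the stability-forced inequality $\phi(E)<\phi(F)$ holds is shown to be the triangle $\mathrm{TR}_{E_2QE}\subset\Geo_{LP}$, i.e.\ entirely inside the geometric chamber and hence disjoint from the leg. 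Nothing in your proposal substitutes for this wall-location step. (Your duality reduction to a single assertion and your use of Gorodentsev--Rudakov to get $\Hom(E,F)\neq 0$ are fine in spirit --- the paper itself dispatches the mirror statement with ``proved in the same way'' --- but they do not bear on the missing step.)
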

\begin{proof}
Let $\mathcal E$ $=$ $\{E_1,E_2,E_3=E\}$  be an exceptional triple
extended from $E$. We may choose $E_2$ such that $\mu(E_2)$ $<$
$\mu(F)-3$. This can be done because of the correspondence between
dyadic triples (\ref{eq:dyadictriples}) and exceptional triples. In
particular, we may choose dyadic triples (\ref{eq:dyadictriples}) of
the second type for some $q$ large enough.

By \cite{GorRu}, since $\mu(E)<\mu(F)$, Hom$(E,F)\neq
0$. Therefore, we have $\phi(F)>\phi(E)$. On the other hand,  Hom($F,E_1[2])$ $=$ Hom($E_1,F(-3))$ $\neq$ $0$, as by
the choice of $\mathcal E$, $\mu(E_1)$ $<$ $\mu(F(-3))$. Therefore,
$\phi(F)<\phi(E_1[2])$.

Now we have $\phi(E_3)<\phi(F)<\phi(E_1[2])$,  this implies
$\phi_3-\phi_1<2$. The point
$(\overrightarrow{m},\overrightarrow{\phi})$ is in the region
$\Theta_\mathcal E(\phi_3-\phi_1<2,\phi_2-\phi_1<1)$. Since $\Theta^{\triangledown}_{\mathcal E}\subset \Stab^{\Geo}$,  $(\overrightarrow{m},\overrightarrow{\phi})$ is in the region 
$\Theta_\mathcal E(\phi_3-\phi_1<2,\phi_3-\phi_2>1)\subset \Stab^{\mathrm{nd}}$.

We may consider the image $W$ of
Ker$\big(\Theta_\mathcal E(\phi_3-\phi_1<2,\phi_3-\phi_2>1)\big)$ on
$\pkp$ and the wall $\mu(E)$ $=$ $\mu(F)$. By
similar arguments in Proposition \ref{prop:commomareaofalggeo} and
the result of Lemma \ref{lemma:paraandspanplane}, $W$ is connected
and is a `triangle' on the projective $\cccp$. On the
$\cccp$, $W$ is the union
of two regions bounded by $\{l^r_{E_1E_2}$, $l_{E_2E}$, $l^r_{E_1E}\}$
and $\{l^r_{EE_1}$, $l^r_{E_2E_1}\}$ respectively. 

\begin{center}
\tikzset{%
    add/.style args={#1 and #2}{
        to path={%
 ($(\tikztostart)!-#1!(\tikztotarget)$)--($(\tikztotarget)!-#2!(\tikztostart)$)%
  \tikztonodes},add/.default={.2 and .2}}
}
\begin{tikzpicture}[domain=2:1]
\newcommand\XA{0.1}
\newcommand\obe{-0.3}

\coordinate (E) at (0,0);
\node  at (E) {$\bullet$};
\node [below] at (E) {$E$};

\coordinate (EA) at (-1,-0.3);
\node  at (EA) {$\bullet$};
\node [below left] at (EA) {$E_2$};

\coordinate (EB) at (-1.3,0.3);
\node  at (EB) {$\bullet$};
\node [below left] at (EB) {$E_1$};
\node at (0,-2.3) {$\bullet$} node[left] at (0,-2.3) {$e^+$};

\coordinate (F) at (2,3);
\node [left] at (F) {$F$};
\node at (F) {$\bullet$};

\draw (-2.2,1) node {$W$};
\draw (2,-1.5) node {$W$};

\draw [add =-1 and 3,dashed] (EA) to (E) node [right]{$L_{E_2E}$};
\draw (EA) -- (E);

\draw [add =-1 and 3] (EB) to (E) node [right]{$l^r_{E_1E}$};
\draw [add =-1 and 2] (E) to (EB) node [left]{$l^r_{EE_1}$};
\draw [add =0 and 0, dashed] (E) to (EB) node [left]{};

\draw [add =-1 and 3] (EA) to (EB) node [right]{$l^r_{E_2E_1}$};
\draw [add =-1 and 5] (EB) to (EA) node [right]{$l^r_{E_1E_2}$};
\draw [add =0 and 0, dashed] (EA) to (EB) node [right]{};

\draw [add =0 and 3,dashed] (E) to (0,-1) node [left]{$l_{E-}$};

\draw [add =0 and 0.33,dashed] (F) to (E) node {$\bullet$} node [below left] {Q};

\end{tikzpicture} 
\end{center}

As $\mu(E_1)<\mu(F(-3))$, $F$
is above $L_{e^+_1e^r_1E_2E}$, on which $\chi(-,E_1)$ $=$ $0$. The
ray $l^r_{FE}$ is in the angle spanned by $l_{Ee^+_1}$ and $l_{E-}$.
Let $Q$ be the intersection of $L_{EF}$ and $L_{e^+_3e^l_3E_2E_1}$,
then it is on the segment $l_{E_2e^+}$. By the position of the
lines, $L_{EF}\cap W$ $=$ $l_{EQ}$ and it is the only wall on which
$\phi(F)=\phi(E)$. By Proposition \ref{prop:commomareaofalggeo} and
Lemma \ref{lemma:slopecompare}, we have $\phi(E)<\phi(F)$,  when $\Ker Z$ is in
the triangle area TR$_{E_2QE}$; and  we have $\phi(E)>\phi(F)$, when $\Ker Z$ is
 in TR$_{EQe^+}$. As $l_{EQ}$ is the only wall, in $\Theta_\mathcal
E(\phi_3-\phi_1<2,\phi_3-\phi_2>1)$, $\phi(E)<\phi(F)$ if and only
if $\Ker Z$ is in TR$_{E_2QE}$ $\subset$ $\Geo_{LP}$. We get the
contradiction that $F$ cannot be stable at any point in
$\Theta^-_E$.

The $\Theta^+_F$ part is proved in the same way.
\end{proof}

\begin{prop}
Let $E$ and $F$ be two exceptional bundles, then $\Theta^+_E$
$\cap$ $\Theta^+_{F}$ is non-empty if and only if $E=F$. The same
statement holds for $\Theta^-_E$ and $\Theta^-_{F}$. In addition
$\Theta^+_E$ $\cap$ $\Theta^-_{F}$ $=$ $\phi$ for any $E$ and $F$.
\label{prop:legsareunique}
\end{prop}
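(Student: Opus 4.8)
The plan is to treat the three assertions separately, deriving the first two from Lemma \ref{lemma:stabareaofexcobj} and the stability of the defining exceptional object on each leg, and isolating the third as the genuinely new input. The ``if'' direction of the first two is immediate: if $E=F$ then $\Theta^+_E\cap\Theta^+_F=\Theta^+_E\neq\varnothing$, and similarly for $\Theta^-$. So it suffices to prove the ``only if'' direction. Here I would first record that two distinct exceptional bundles have distinct slopes, so that for $E\neq F$ we may assume without loss of generality $\mu(E)<\mu(F)$.

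For $\Theta^+_E\cap\Theta^+_F$ with $\mu(E)<\mu(F)$, suppose $\sigma$ lies in the intersection. Writing $\sigma\in\Theta^+_E=\Theta^+_{\mathcal E,E_1}$ for a triple $\mathcal E=\{E,E_2,E_3\}$, the object $E=E_1$ is $\sigma$-stable by Proposition \ref{prop:thetaE}(1). On the other hand $\sigma\in\Theta^+_F$ and $\mu(E)<\mu(F)$, so Lemma \ref{lemma:stabareaofexcobj} forces $E$ to be $\sigma$-unstable, a contradiction. The argument for $\Theta^-_E\cap\Theta^-_F$ is symmetric: now $F$ is the third member of the triple defining $\Theta^-_F$, hence $\sigma$-stable, while $\mu(E)<\mu(F)$ together with $\sigma\in\Theta^-_E$ gives, again by Lemma \ref{lemma:stabareaofexcobj}, that $F$ is $\sigma$-unstable. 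Thus both like-signed intersections are empty unless $E=F$.

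The third assertion $\Theta^+_E\cap\Theta^-_F=\varnothing$ is the main obstacle, because at a hypothetical common point $\sigma$ Lemma \ref{lemma:stabareaofexcobj} yields only $\mu(E)\le\mu(F)$ (from the stability of $E$ inside $\Theta^-_F$ and of $F$ inside $\Theta^+_E$), which is not yet contradictory. Here I would use the description of the two legs through the skyscraper sheaf $k(x)$ recorded in Remark \ref{rem:wallofgeo}. Along $\Theta^-_F$ the sheaf $k(x)$ is \emph{destabilized}, so the maximal Harder--Narasimhan factor of $k(x)$ is pushed above the nominal phase $\tfrac1\pi\arg Z_\sigma(k(x))$, while along $\Theta^+_E$ it is \emph{co-destabilized by} $E[1]$, which pulls the minimal Harder--Narasimhan phase of $k(x)$ down by a full unit relative to the same nominal phase. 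The idea is that these are opposite deformations of $k(x)$ out of its geometric wall and cannot coexist at one $\sigma$; the case $E=F$ should give the cleanest contradiction, since then the single bundle $E$ would have to raise the top of the filtration of $k(x)$ and, through $E[1]$, lower its bottom simultaneously, against the tautological inequality $\phi^+(k(x))\ge\phi^-(k(x))$.

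The step I expect to be hardest is making this precise: in general $k(x)$ is an iterated extension of all three members of a triple, so the extremal Harder--Narasimhan factors are not literally $E$ or $F$, and one must identify them and convert ``opposite deformations'' into a numerical contradiction. I anticipate that closing the $E\neq F$ case will require supplementing the slope bound $\mu(E)\le\mu(F)$ with a phase-gap estimate for $\sigma$-stable exceptional bundles coming from Serre duality, namely that a nonzero $\Ext^2(B,A)\cong\Hom(A,B(-3))^{*}$ bounds $\phi_\sigma(B)-\phi_\sigma(A)$ strictly below $2$; alternatively one could try to separate the two legs by comparing the images of $\Ker Z$ in $\pkp$, at the cost of handling degenerate points. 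By contrast, the first two assertions fall out cleanly from Lemma \ref{lemma:stabareaofexcobj}, so essentially all the difficulty is concentrated in the mixed intersection.
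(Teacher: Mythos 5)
Your handling of the first two assertions is correct and coincides with the paper's: on $\Theta^{\pm}_E$ the bundle $E$ is stable by Proposition \ref{prop:thetaE}, so Lemma \ref{lemma:stabareaofexcobj} immediately rules out $\Theta^+_E\cap\Theta^+_F$ and $\Theta^-_E\cap\Theta^-_F$ for $E\neq F$, and the same observation disposes of the mixed intersection in one direction, leaving exactly $\Theta^-_F\cap\Theta^+_E$ with $\mu(E)\leq\mu(F)$. You have located the residual difficulty correctly.

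For that remaining case, however, your proposal is not a proof. The picture of ``opposite deformations of $k(x)$'' is just Remark \ref{rem:wallofgeo} restated, and you concede that you cannot identify the extremal Harder--Narasimhan factors of $k(x)$ or turn the picture into an inequality; the paper in fact never argues through $k(x)$ here. What it does instead is split the remaining case according to the sign of $\mu(F)-\mu(E)-3$ and, in each subcase, choose auxiliary exceptional triples with prescribed slope bounds, which is possible by the classification (\ref{eq:dyadictriples}). When $\mu(F)-\mu(E)<3$ one picks $\mathcal F=\{F_1,F_2,F\}$ with $\mu(F_1)<\mu(E)$, so that $F_1$ is stable on $\Theta^-_F\subset\Theta_{\mathcal F}$ but unstable on $\Theta^+_E$ by Lemma \ref{lemma:stabareaofexcobj}, and the intersection is empty. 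When $\mu(F)-\mu(E)>3$ one picks $\mathcal F$ with $\mu(F_1)>\mu(E)+3$ and $\mathcal E=\{E,E_2,E_3\}$ with $\mu(E_3)<\mu(F_1)-3$, and uses $\Hom(E,F_1)\neq 0$ together with the Serre-duality statement $\Hom(F_1,E[2])=\Hom(E,F_1(-3))\neq 0$ to trap all the $F_i$ in the single heart $\langle E[2],E_2[1],E_3\rangle$, forcing $\phi(F_3)-\phi(F_1)<1$ and hence $\sigma\notin\Theta_{\mathcal F}$. The boundary case $F=E(3)$ gets a separate argument with the triple $\{E,F_1,F_2\}$, where $\Hom(F,F(-3)[2])=\mathbb C$ bounds $\phi(F)-\phi(E)<2$ and pushes $\sigma$ into $\Theta_{\mathcal F}(\phi_3-\phi_2<1)$, which misses $\Theta^-_F$ by Proposition \ref{prop:commomareaofalggeo}. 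So the Serre-duality phase bound you anticipated is indeed the key tool, but the actual mechanism --- manufacturing auxiliary exceptional objects with controlled slopes, confining them to one heart of $\Theta_{\mathcal E}$, and contradicting the defining inequalities of $\Theta_{\mathcal F}$ --- is entirely absent from your write-up, and without it the mixed intersection (including the case $E=F$) remains unproved.
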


\begin{proof}
For any stability condition  in $\Theta^\pm_E$, $E$ is stable. Assuming $\mu(E)$ $<$
$\mu(F)$, by Lemma \ref{lemma:stabareaofexcobj}, we only need prove
that $\Theta^-_F\cap\Theta^+_E$ is empty.

When  $\mu(E)+3$ $>$ $\mu(F)$, we may choose an exceptional triple
$\mathcal F$ $=$ $\{F_1,F_2,F_3 = F\}$ being an extension of $F$
such that $\mu(F_1)$ $<$ $\mu(E)$. Such triple exists due to the
correspondence of triples of dyadic numbers and exceptional triples.
By Lemma \ref{lemma:stabareaofexcobj}, $F_1$ is not stable in
$\Theta^+_E$, the intersection $\Theta^-_F\cap\Theta^+_E$ is empty.

When $\mu(E)+3$ $=$ $\mu(F)$, in other words, $E$ $=$ $F(-3)$, we
may choose an exceptional triple $\mathcal F$ $=$ $\{F_1,F_2,F_3 =
F\}$ being an extension of $F$. There is another exceptional triple
$\mathcal E$ $=$  $\{E_1=E, E_2 =F_1,E_3 =F_2\}$. Suppose  $\Theta^-_F$ intersects with $\Theta^+_E$ at a point $(\overrightarrow{m},\overrightarrow{\phi})$ in
$\Theta^+_{\mathcal E,E}$. In particular, $F_3$ is stable, we may assume that  it is in a heart $\langle E[a_1],
E_2[a_2],E_3[a_3]\rangle$ for some integers $a_1$, $a_2$, $a_3$ such
that $a_2-a_1\geq 1$ and $a_3-a_2\geq 1$. Since \[\Hom(F,E_1[2])
=\Hom(F,F(-3)[2])=\mathbb C,\] we have $\phi(F)$ $-$ $\phi(E_1)$
$<2$. As $|\phi(F)-\phi(E[a_1])|<1$, we have $a_1 < 3$, hence
$a_3\leq 0$. Now we have \[\phi(F)-\phi(F_2)=\phi(F)-\phi(E_3)\leq
\phi(F)-\phi(E_3[a_3]) < 1.\] Under the chart of $\Theta_{\mathcal
F}$, this stability condition is in $\Theta_{\mathcal
F}(\phi_3-\phi_2<1)$. By Proposition \ref{prop:commomareaofalggeo},
$\Theta_{\mathcal F}(\phi_3-\phi_2<1)\cap \Theta^-_F$ is empty, we
get $\Theta^-_F\cap\Theta^+_{F(-3)}$ is empty.

The last case is when $\mu(E)+3$ $<$ $\mu(F)$. We may choose an
exceptional triple $\mathcal F$ $=$ $\{F_1,F_2,F_3 = F\}$ being an
extension of $F$ such that $\mu(F_1)$ $>$ $\mu(E)+3$. Again, such
triple exists due to the correspondence of triples of dyadic numbers
and exceptional triples. Let $\mathcal E=$ $\{E_1=E,E_2,E_3\}$ be
an extension of $E$ such that $\mu(E_3)$ $<$ $\mu(F_1)-3$. 

Suppose
$F_i$'s are stable at a point
$(\overrightarrow{m},\overrightarrow{\phi})$ in $\Theta^+_{\mathcal
E,E}$.  We may assume that $F_1$ is in the heart $\langle
E_1[a_1],E_2[a_2],E_3[a_3]\rangle$ for $a_i\in\mathbb Z$ such that
$a_1-a_2\geq 1$ and $a_2-a_3\geq 1$. By \cite{GorRu}, since $\mu(E)<\mu(F_1)$, Hom$(E,F_1)\neq
0$, we have $\phi(F)>\phi(E)$. As $F_1$ and $E_3[a_3]$ are in the same
heart, $\phi(F_1)-\phi(E_3[a_3])<1$, we get $a_3\geq 0$. On the
other hand, Hom($F_1,E_1[2])$ $=$ Hom($E_1,F_1(-3))$ $\neq$ $0$, as by
the choice of $\mathcal E$, $\mu(E_1)$ $<$ $\mu(F(-3))$. Therefore,
$\phi(F_1)<\phi(E_1[2])$. As $F_1$ and $E_1[a_1]$ are in the same heart,
$\phi(E_1[a_1])-\phi(F_1)<1$, we get $a_1\leq 2$. As a result, $F_1$ is
in the heart $\langle E_1[2],E_2[1],E_3\rangle$.

By the same argument,
$F_i$'s are all in the same heart $\langle
E_1[2],E_2[1],E_3\rangle$. As $\phi(F_3)-\phi(F_1)$ $<$ $1$, this
stability condition is not in $\Theta_{\mathcal F}$.

As a conclusion, $\Theta^-_F\cap\Theta^+_E$ is empty when $E\neq F$.
\end{proof}
\begin{cor}
The union of geometric and algebraic stability conditions has the
following decompositions:
\[ \Stab^{\Geo}(\pp)\cup \Stab ^{\Alg}(\pp) = \Stab^{\Geo}(\pp) \coprod
\left(\coprod_{E\text{ exc sheaves}} \Theta^{\pm}_{E}\right) \coprod
\left(\coprod_{\mathcal E\text{ exc triple}}
l_{\Theta^{\mathrm{pure}}_{\mathcal E}}\right).\] \label{cor:celldecomp}
\end{cor}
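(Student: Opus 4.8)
The plan is to obtain the decomposition by first splitting each $\Theta_{\cE}$ into its head, two legs and tail according to the two phase gaps, and then reassembling globally, feeding in the gluing statement (Proposition \ref{prop:commomareaofalggeo}), the leg-identification (Proposition and Definition \ref{prop:defoflegs}), the leg-separation (Proposition \ref{prop:legsareunique}) and the purity statement (Lemma \ref{lemma:purepure}).

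First I would fix an exceptional triple $\cE=\{E_1,E_2,E_3\}$ and compare each of $\phi_2-\phi_1$ and $\phi_3-\phi_2$ with $1$. Reading off Definition \ref{def: theta E}, I claim the disjoint splitting
\[\Theta_{\cE}=\Theta^{\Geo}_{\cE}\ \sqcup\ \Theta^{+}_{\cE,E_1}\ \sqcup\ \Theta^{-}_{\cE,E_3}\ \sqcup\ \Theta^{\mathrm{Pure}}_{\cE}.\]
Coverage is immediate: a non-geometric $\sigma\in\Theta_{\cE}$ has $\phi_3-\phi_2<1$ (then $\sigma\in\Theta^{+}_{\cE,E_1}$), or $\phi_2-\phi_1<1$ (then $\sigma\in\Theta^{-}_{\cE,E_3}$), or both gaps $\geq 1$ (then $\sigma\in\Theta^{\mathrm{Pure}}_{\cE}$). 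For disjointness, the two legs and the head are separated by the removal of $\Theta^{\Geo}_{\cE}$ together with the fact that $\{\phi_2-\phi_1<1,\ \phi_3-\phi_2<1\}\subset\Theta^{\triangledown}_{\cE}\subset\Stab^{\Geo}$ from Proposition \ref{prop:commomareaofalggeo}(1); the tail is separated from the two legs by the gap inequalities; and the head is separated from the tail because a geometric point has every $k(x)$ stable, while a tail point has only the $E_i[n]$ stable by Lemma \ref{lemma:purepure}.

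Then I would sum over all triples. Since $\Theta^{\Geo}_{\cE}=\Theta_{\cE}\cap\Stab^{\Geo}$ by definition, every head is absorbed into $\Stab^{\Geo}$, leaving
\[\Stab^{\Geo}\cup\Stab^{\Alg}=\Stab^{\Geo}\ \cup\ \bigcup_{\cE}\Theta^{+}_{\cE,E_1}\ \cup\ \bigcup_{\cE}\Theta^{-}_{\cE,E_3}\ \cup\ \bigcup_{\cE}\Theta^{\mathrm{Pure}}_{\cE}.\]
By Proposition and Definition \ref{prop:defoflegs} the leg $\Theta^{+}_{\cE,E_1}$ depends only on $E_1$, and by the dyadic classification (\ref{eq:dyadictriples}) every exceptional bundle occurs as the first and as the last member of some triple, so the two leg unions reindex as $\coprod_{E}\Theta^{+}_{E}$ and $\coprod_{E}\Theta^{-}_{E}$, that is, as $\coprod_{E}\Theta^{\pm}_{E}$.

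The remaining and main task is the global pairwise disjointness of the three families $\Stab^{\Geo}$, the legs $\Theta^{\pm}_{E}$ and the tails $\Theta^{\mathrm{Pure}}_{\cE}$:
\begin{itemize}
\item $\Stab^{\Geo}$ meets no leg: writing a leg as $\Theta^{+}_{\cE,E_1}\subset\Theta_{\cE}$, any geometric point of $\Theta_{\cE}$ lies in $\Theta^{\Geo}_{\cE}=\Theta_{\cE}\cap\Stab^{\Geo}$, which the leg excludes; and it meets no tail by the $k(x)$ versus $E_i[n]$ argument above.
\item Legs are pairwise disjoint: this is exactly Proposition \ref{prop:legsareunique}, giving $\Theta^{+}_{E}\cap\Theta^{+}_{F}=\Theta^{-}_{E}\cap\Theta^{-}_{F}=\emptyset$ for $E\neq F$ and $\Theta^{+}_{E}\cap\Theta^{-}_{F}=\emptyset$ always.
\item Two tails are disjoint: if $\sigma$ lay in $\Theta^{\mathrm{Pure}}_{\cE}\cap\Theta^{\mathrm{Pure}}_{\cE'}$, then by Lemma \ref{lemma:purepure} the sheaves of $\cE$ and $\cE'$ would have the same shift-classes of stable objects, and ordering by phase forces $\cE=\cE'$.
\item A leg is disjoint from a tail: a tail point has exactly three stable objects up to shift, whereas in a leg $\Theta^{+}_{\cE,E_1}$ the mutation $L_{E_2}E_3$ is stable alongside $E_1,E_2,E_3$ (by \cite{Mac} Proposition 3.17, as used in (\ref{eq2})), a fourth stable family, so the two cannot coincide.
\end{itemize}
The hardest step will be this last bookkeeping: one must use the precise ``only the $E_i[n]$ are stable'' form of Lemma \ref{lemma:purepure} both to pin down the triple in a tail--tail overlap and to contradict the existence of the extra mutated stable bundle in the leg--tail case. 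Everything else is a direct reassembly of Propositions \ref{prop:commomareaofalggeo}, \ref{prop:defoflegs} and \ref{prop:legsareunique}.
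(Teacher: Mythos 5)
Your proof is correct and follows the same route the paper intends: the corollary is stated there without proof as a direct assembly of Proposition \ref{prop:commomareaofalggeo}, Proposition \ref{prop:defoflegs}, Proposition \ref{prop:legsareunique} and Lemma \ref{lemma:purepure}, which is exactly what you carry out. The only detail you supply beyond the paper's implicit argument is the leg--tail disjointness via the fourth stable object $L_{E_2}E_3$, which is a valid use of the mutation stability cited in Proposition \ref{prop:defoflegs}.
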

We are now ready to show Stab$^{\Geo}(\textbf P^2)\cup$
Stab$^{\Alg}(\pp)$ form the whole connected component. To do
this, we need to prove that Stab$^{\Geo}(\textbf P^2)\cup$
Stab$^{\Alg}(\pp)$ has no boundary point. The following
important result is from \cite{Mac05}: the boundary of finitely many
$\Theta_{\mathcal E}$ is contained in Stab$^{Alg}$.
\begin{theorem}[Theorem 4.7  in \cite{Mac05}]
Let $\mathcal E$ be an exceptional triple, we have
\[ \partial
\Theta_{\mathcal E} \subset \Stab^{\Alg}.\]
\label{thm:bndryofthetae}
\end{theorem}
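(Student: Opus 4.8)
The plan is to stratify $\partial\Theta_{\cE}$ by which of the three strict inequalities cutting out $\Theta_{\cE}$ becomes an equality in the limit, and for each stratum to exhibit an exceptional triple $\cE'$, obtained from $\cE$ by one mutation or a helix shift, whose \emph{open} region $\Theta_{\cE'}$ (or geometric part $\Theta^{\Geo}_{\cE'}$) contains the boundary point. Write $\sigma=\lim_n\sigma_n$ with $\sigma_n\in\Theta_{\cE}$; convergence in $\Stab$ gives $Z_n\to Z$ and $\phi_i^{(n)}\to\phi_i$, and each $E_i$ is $\sigma$-semistable as a limit of $\sigma_n$-stable objects. I would first rule out $m_i=|Z(E_i)|\to0$: if $Z(E_i)=0$ then $0\neq[E_i]\in\Ker Z$, yet $Q(E_i)\ge0$ for the support quadratic form $Q$, contradicting negative-definiteness of $Q|_{\Ker Z}$. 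Hence all $m_i$ tend to positive reals and the limiting phases obey $\phi_1\le\phi_2\le\phi_3$, $\phi_1+1\le\phi_3$. By the uniqueness in Proposition \ref{prop:thetaE}, if all three inequalities were strict and each $E_i$ were $\sigma$-stable then $\sigma\in\Theta_{\cE}$; and an $E_i$ can fail to be $\sigma$-stable only along these same walls (its Jordan--Hölder factors acquiring a coincident phase). Thus $\partial\Theta_{\cE}$ is covered by the three walls $\phi_1=\phi_2$, $\phi_2=\phi_3$, and $\phi_3=\phi_1+1$.

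On the wall $\phi_2=\phi_3$ the constraint $\phi_1+1<\phi_3$ forces $\phi_2-\phi_1\ge1$, strictly on the relative interior of the wall. I would pass to the left-mutated triple $\cE'=\{E_1,L_{E_2}E_3,E_2\}$. Since $\phi_3<\phi_2+1$ near the wall, \cite{Mac} Proposition 3.17 (already used in Proposition \ref{prop:defoflegs}) shows $L_{E_2}E_3$ is $\sigma$-stable with phase in $(\phi_3-1,\phi_2)$; combined with $\phi_1<\phi_2-1$ this gives the strict chain $\phi(E_1)<\phi(L_{E_2}E_3)<\phi(E_2)$ together with $\phi(E_1)+1<\phi(E_2)$, i.e.\ exactly the defining inequalities of $\Theta_{\cE'}$. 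Hence $\sigma$ lies in the \emph{interior} of $\Theta_{\cE'}\subset\Stab^{\Alg}$. The wall $\phi_1=\phi_2$ is symmetric: there $\phi_3-\phi_2\ge1$, and the right-mutated triple $\cE'=\{E_2,R_{E_2}E_1,E_3\}$ works, with $R_{E_2}E_1$ stable of phase in $(\phi_2,\phi_1+1)$.

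The wall $\phi_3=\phi_1+1$ is the outer edge of the region $\phi_1+1<\phi_3$ and is the delicate one. Here $E_3$ and $E_1[1]$ share a phase, which on the geometric side is the statement (established inside the proof of Proposition \ref{prop:commomareaofalggeo}) that $\Ker Z$ lies on the segment $l_{e_3e_1}$, one of the five edges of $\MZ_{\cE}$. Since $l_{e_3e_1}\subset\Geo_{LP}$, the point $\sigma$ is in fact \emph{geometric}; so, rather than a mutation, I would produce a neighbouring exceptional triple $\cE'$ whose geometric region $\MZ_{\cE'}$ contains the open edge $l_{e_3e_1}$ in its interior, giving $\sigma\in\Theta^{\Geo}_{\cE'}\subset\Stab^{\Alg}$. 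The candidate $\cE'$ comes from the helix structure on $\Db(\pp)$ (using $\Hom^\bullet(E_1(3),E_j)=0$ for $j=2,3$ by Serre duality, so that $\{E_2,E_3,E_1(3)\}$ is again a triple) and from the mutations relating adjacent triples in \cite{GorRu}; the point to verify is that the $\MZ$-regions of these neighbours overlap and jointly cover a neighbourhood of $l_{e_3e_1}$. The two corners $\{\phi_1=\phi_2,\ \phi_3=\phi_1+1\}$ and $\{\phi_2=\phi_3,\ \phi_3=\phi_1+1\}$ are codimension two and are absorbed by composing the relevant mutation with this edge argument.

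The hard part is precisely this last wall. For $\phi_1=\phi_2$ and $\phi_2=\phi_3$ the mutation lands $\sigma$ in the interior of $\Theta_{\cE'}$ with room to spare, so the argument is purely local. For $\phi_3=\phi_1+1$ one must show the open edge $l_{e_3e_1}$ never meets the ``pure geometric'' locus near the Le Potier curve $C_{LP}$ (the part of $\Stab^{\Geo}$ that lies in no $\Theta_{\cE}$): this is what guarantees that every point of the edge is interior to some neighbouring $\MZ_{\cE'}$. Controlling it requires both the finiteness built into the full statement of \cite{Mac05} Theorem 4.7 (only finitely many triples are relevant near a given $\sigma$) and the classification of exceptional bundles via $C_{LP}$ in Theorem \ref{thm:lp}; checking that the endpoints $e_1,e_3$ sit strictly above $C_{LP}$ and that the segment between them stays in the $\MZ$-covered part is the crux.
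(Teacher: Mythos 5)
First, a point of order: the paper does not prove this statement at all --- it is quoted verbatim as Theorem 4.7 of \cite{Mac05}, so there is no internal proof to measure your argument against; the comparison has to be with Macr\`i's original argument, whose overall shape (stratify $\partial\Theta_{\cE}$ by which defining inequality degenerates, rule out $|Z(E_i)|\to 0$ via the support condition, and re-embed each wall into the open region of a mutated collection) your proposal does reproduce. Your treatment of the walls $\phi_1=\phi_2$ and $\phi_2=\phi_3$ is essentially right, modulo the unproved assertion that an $E_i$ can become strictly semistable only on these three walls; that step needs the finite-length quiver heart $\langle E_1[2],E_2[1],E_3\rangle$, in which every potential destabilizer is an extension of shifts of the $E_j$, so that the relevant walls really are the three you list.

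The genuine gap is the wall $\phi_3=\phi_1+1$, exactly where you flag ``the crux''. Two problems. (i) On this wall $Z(m_3[E_1]+m_1[E_3])=0$, so $\Ker Z$ lies on the \emph{open} segment $l_{e_1e_3}$; but your candidate triple $\cE'=\{E_2,E_3,E_1(3)\}$ does not cover this segment. For $\cE=\{\cO,\cO(1),\cO(2)\}$ the edge is $\{q=s,\ 0<s<2\}$, while the top edge of $\MZ_{\cE'}$ is $q=2s-\tfrac{3}{2}$, so $\MZ_{\cE'}$ contains only the portion $s>\tfrac{3}{2}$ and its own boundary crosses $l_{e_1e_3}$ at $(\tfrac{3}{2},\tfrac{3}{2})$. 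A triple that actually works must involve higher-rank exceptional bundles: $\cE''=\{E_{(-1/2)},\cT_{\pp},\cO(2)\}$, of the third type in (\ref{eq:dyadictriples}), has $v(E_{(-1/2)})=(-\tfrac{1}{2},-\tfrac{1}{4})$, top edge $q=\tfrac{9}{10}s+\tfrac{1}{5}$, and hence $\MZ_{\cE''}$ contains the whole open segment $\{q=s,\ 0<s<2\}$. Producing such a triple in general is where the Gorodentsev--Rudakov classification and the dyadic combinatorics must enter, and your proposal does not supply this step. (ii) Even granting the covering, the inference ``$\Ker Z\in l_{e_1e_3}\subset\Geo_{LP}$, hence $\sigma$ is geometric, hence $\sigma\in\Theta^{\Geo}_{\cE''}$'' uses that a limit of points of $\Theta_{\cE}$ whose kernel lands in $\Geo_{LP}$ is itself geometric; since $\Ker$ is only a \emph{local} homeomorphism on $\Stab^{\mathrm{nd}}(\pp)$ this needs an argument (e.g.\ that $k(x)$ remains stable in the limit), and the degenerate locus must be handled separately, as in the proof of Proposition \ref{prop:bdryofstabgeo}. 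As it stands, the proposal is a correct outline in which the decisive step --- the one that makes Theorem 4.7 of \cite{Mac05} a theorem rather than a reduction --- is left open.
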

To prove the main result, we also need the following description for
details of the boundary of $\Theta^\pm_E$.
\begin{lemma}
Let $E$ be an exceptional bundle, the boundary of $\Theta^+_E$ (as
well as $\Theta^-_E$) is contained in the union of the boundary of $\Stab^{\Geo}$ and the
boundary of $\Theta^{\mathrm{pure}}_{\mathcal E}$ for exceptional triples
$\mathcal E$ that contain $E$:
\[\partial \Theta^+_E\subset \partial \Stab^{\Geo} \bigcup \left(\coprod_{\cE:\text{ exc triple contains $E$}} \partial \Theta^{\mathrm{pure}}_{\cE}\right)\]
 \label{lemma:boundaryofleg}
\end{lemma}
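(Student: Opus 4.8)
The plan is to read the boundary of the leg off a single exceptional–triple chart while using the mutation–invariance of $\Theta^+_E$ to discard the apparent faces that are really interior. Fix a triple $\mathcal E=\{E_1=E,E_2,E_3\}$ and put $a:=\phi_2-\phi_1$, $b:=\phi_3-\phi_2$ on the chart $\Theta_{\mathcal E}$. Recall from the proof of Proposition \ref{prop:defoflegs} that in \emph{every} such chart
\[\Theta^+_E=\Theta_{\mathcal E}(a>1,\;b<1)\setminus\Stab^{\Geo},\]
and note that $\Stab^{\Geo}$ is open in $\Std(\pp)$: by Corollary \ref{cor:geost} and Lemma \ref{lemma:localhomeo} it is $\glt\cdot\Ker^{-1}(\Geo_{LP})$ with $\Geo_{LP}$ open and $\Ker$ a local homeomorphism. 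Let $\sigma_0\in\partial\Theta^+_E$ and choose leg points $\sigma_n\to\sigma_0$.

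First I would settle the geometric interface. If $\sigma_0\in\overline{\Stab^{\Geo}}$, then because $\Stab^{\Geo}$ is open while every $\sigma_n$ is non-geometric, $\sigma_0\notin\Stab^{\Geo}$, so $\sigma_0\in\partial\Stab^{\Geo}$ and there is nothing more to prove. So assume $\sigma_0\notin\overline{\Stab^{\Geo}}$ and fix a neighbourhood $U\ni\sigma_0$ with $U\cap\Stab^{\Geo}=\emptyset$; inside $U$ the removal of $\Stab^{\Geo}$ is vacuous, so locally $\Theta^+_E$ is just $\{a>1,\,b<1\}$ in whichever chart applies.

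Next I would place $\sigma_0$ in an open cell. Either $\sigma_0$ already lies in the open cell $\Theta_{\mathcal E}$, or $\sigma_0\in\partial\Theta_{\mathcal E}$, in which case Theorem \ref{thm:bndryofthetae} puts $\sigma_0\in\Stab^{\Alg}$, hence in some open cell $\Theta_{\mathcal E'}$. In either case $\sigma_0$ is interior to a cell, and since the nearby $\sigma_n$ are seen there as the leg of $E$, Propositions \ref{prop:defoflegs} and \ref{prop:legsareunique} force that cell to have $E$ as its first object; rename it $\mathcal E$. Now $a,b$ are continuous at $\sigma_0$, and as a boundary point of $\{a>1,b<1\}$ inside the open cell it must satisfy $a=1$ or $b=1$. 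The face $\{a=1,\,b<1\}$ lies in $\Theta^{\triangledown}_{\mathcal E}\subset\Stab^{\Geo}$ (as $a+b<2$), which is excluded. Hence $\phi_3-\phi_2=1$ and $\phi_2-\phi_1>1$, i.e. $\sigma_0$ lies on the distinguished boundary face of $\Theta^{\mathrm{Pure}}_{\mathcal E}$ with $E\in\mathcal E$, as required. The symmetric argument handles $\Theta^-_E$.

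The main obstacle is exactly the step that lets one assume $\sigma_0$ is interior to a cell headed by $E$: one must show that every wall of $\Theta_{\mathcal E}$ met from inside the leg is an $E_2$--$E_3$ mutation wall across which $\sigma_0$ stays interior to the \emph{same} leg, rather than a genuine new boundary. Concretely, along the $\sigma_n$ the inequality $a>1$ persists, so the collision $\phi_2=\phi_1$ and the degeneration $\phi_3-\phi_1\to1$ cannot occur; the only available collision is $\phi_3=\phi_2$, giving the mutated triple $\mathcal E'=\{E,\,L_{E_2}E_3,\,E_2\}$, again headed by $E$. Here one checks, via \cite{Mac} Proposition 3.17 and the identification of Proposition \ref{prop:defoflegs}, that in the $\mathcal E'$-chart $\sigma_0$ has $\phi'_3-\phi'_2\in(0,1)$, and, since $\sigma_0\notin\overline{\Stab^{\Geo}}$ rules out $\phi'_2-\phi'_1\le1$, also $\phi'_2-\phi'_1>1$; thus $\sigma_0$ is interior to $\Theta^+_{\mathcal E',E}=\Theta^+_E$, so such walls never contribute to $\partial\Theta^+_E$. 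Carrying out this mutation bookkeeping cleanly is the crux; once it is in place, the only surviving boundary pieces are $\partial\Stab^{\Geo}$ and $\bigcup_{\mathcal E\ni E}\partial\Theta^{\mathrm{Pure}}_{\mathcal E}$.
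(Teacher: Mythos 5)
Your proof is correct and takes essentially the same route as the paper's: both invoke Theorem \ref{thm:bndryofthetae} to place the boundary point inside some algebraic cell $\Theta_{\mathcal F}$ and then eliminate the interiors of its geometric, leg and pure pieces using the openness of $\Stab^{\Geo}$ together with Proposition \ref{prop:legsareunique}. Your explicit $(a,b)$-bookkeeping has the small merit of verifying that the relevant triple is headed by $E$ (a detail the paper's proof leaves implicit even though it appears in the statement), while your closing paragraph on mutation walls is already subsumed by the earlier cell-location step and could be omitted.
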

\begin{proof}
Let $\sigma$ $\in$ Stab$^\dag$ be a point on the boundary of
$\Theta^\pm_E$. By Theorem \ref{thm:bndryofthetae}, $\sigma$ belongs
to $\Theta_\mathcal F$ for some exceptional triple $\mathcal F$ $=$
$\{F_1,F_2,F_3\}$. The point $\sigma$ is not in
$\Theta^{\Geo}_\mathcal F$ as else it has an open neighborhood in
Stab$^{\Geo}$. The point $\sigma$ is also not an inner point of 
$\Theta^{\mathrm{pure}}_\mathcal F$, as else it has an open neighborhood such
that the only stable objects are $F_i[n]$ for $i=1,2,3$ and
$n\in\mathbb Z$. By Proposition \ref{prop:legsareunique}, $\sigma$
is not in the inner point of $\Theta^\pm_\mathcal F$. Hence, 
$\sigma$ is either on the boundary of $\Theta^{\text{pure}}_\mathcal F$ or the
boundary between $\Theta_\cF^{\Geo}$ and $\Theta^\pm_\mathcal F$.
\end{proof}
\begin{prop}
The boundary of the geometric stability conditions is contained in
the space of algebraic stability conditions:
 \[ \partial \Stab^{\Geo} \subset \Stab^{\Alg}.\]
 \label{prop:bdryofstabgeo}
\end{prop}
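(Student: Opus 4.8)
The plan is to read a boundary point off from the kernel of its central charge, and then to reduce every possibility to the two facts already in hand: that $\Theta^{\Geo}_{\cE}$ realizes $\MZ_{\cE}$ (Proposition \ref{prop:commomareaofalggeo}(2)), and that $\partial\Theta_{\cE}\subset\Stab^{\Alg}$ (Theorem \ref{thm:bndryofthetae}).

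First I would take $\sigma\in\partial\Stab^{\Geo}$ with a sequence $\sigma_n\in\Stab^{\Geo}$ converging to it. The kernel of a central charge is unchanged by the $\glt$-action, and by Lemma \ref{lemma:localhomeo} the map $\Ker$ is a local homeomorphism on $\Stab^{\mathrm{nd}}(\pp)/\glt$; so either $\sigma$ is non-degenerate, with finite kernel $P_0:=\Ker\sigma=\lim\Ker\sigma_n\in\overline{\Geo_{LP}}$, or $\sigma$ is degenerate with $\Ker\sigma$ a line in $\pkp$. I would postpone the degenerate case, disposing of it by the accumulation argument below applied along that line, and assume $\sigma$ non-degenerate. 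Then $P_0\notin\Geo_{LP}$, since otherwise local injectivity of $\Ker$ together with Corollary \ref{cor:geost} would place $\sigma$ in $\Stab^{\Geo}$. Hence $P_0\in\partial\Geo_{LP}$, the union of the slit segments $l_{e(E)e^+(E)}$ and the Le Potier curve $C_{LP}$.

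The core step treats the case where $P_0\in\overline{\MZ_{\cE}}$ for some single exceptional triple $\cE$; this covers the slits, the straight pieces $l_{e^+e^l}$ and $l_{e^re^+}$ of $C_{LP}$, and the vertices $e^+$, each of which is adjacent to some $\MZ_{\cE}$. Working in the local homeomorphism chart around $\sigma$ and using the identification $\Theta^{\Geo}_{\cE}/\glt\cong\MZ_{\cE}$, the relation $P_0\in\overline{\MZ_{\cE}}$ pulls back to $\sigma\in\overline{\Theta^{\Geo}_{\cE}}\subset\overline{\Theta_{\cE}}$. If $\sigma\in\Theta_{\cE}$ then $\sigma\in\Stab^{\Alg}$ by definition; otherwise $\sigma\in\partial\Theta_{\cE}$ and Theorem \ref{thm:bndryofthetae} gives $\sigma\in\Stab^{\Alg}$. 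Thus every boundary point whose kernel lies on the closure of a single $\MZ_{\cE}$ is algebraic.

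The only remaining kernels are the points of $C_{LP}\cap\Delta_{\frac{1}{2}}$ lying on no straight piece, the fractal accumulation points, which belong to no single $\overline{\MZ_{\cE}}$; ruling these out is the main obstacle. I would show that no stability condition in $\Std(\pp)$ has such a kernel, so that no boundary point of $\Stab^{\Geo}$ sits over $P_0$. By Theorem \ref{thm:lp} the Gieseker-stable characters are dense immediately below $C_{LP}$, so one can pick Gieseker-stable sheaves $F_k$ with $v(F_k)\to P_0$. Any $\sigma$ with $\Ker\sigma=P_0$ must admit a support quadratic form $Q$, negative definite on the kernel ray $\R\cdot(1,s_0,q_0)$ and nonnegative on every semistable object; since $\tilde{v}(F_k)$ approaches this ray, exhibiting even one family of $\sigma$-semistable objects whose characters converge to the kernel ray contradicts the existence of $Q$. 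The delicate point, and the crux of the whole proposition, is to guarantee that such a family survives in the limit $\sigma$ rather than being destabilized into Harder--Narasimhan factors whose characters escape the kernel ray; this is exactly where the Drezet--Le Potier classification and the self-similar geometry of $C_{LP}$ must be used.
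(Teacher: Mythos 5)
Your overall architecture matches the paper's: split into the non-degenerate and degenerate cases, read off $P_0=\Ker Z\in\partial\Geo_{LP}$, dispose of every $P_0$ adjacent to some $\MZ_{\cE}$ via Proposition \ref{prop:commomareaofalggeo} and Theorem \ref{thm:bndryofthetae}, and kill the fractal accumulation points on $\Delta_{\frac{1}{2}}$ by violating the support condition with characters converging to the kernel. But the proposal has a genuine gap exactly where you flag ``the crux'': you never actually produce the family of $\sigma$-semistable objects whose characters accumulate on the kernel ray, and this is not a routine verification. The paper supplies it with a specific external input: for a character $v_n$ chosen in a small disc around $(1,s,q)$, strictly below the line $L_1$ joining $(1,s,q)$ to $\bigl(1,s-3,\tfrac{1}{2}(s-3)^2-\tfrac{1}{2}\bigr)$ and to the right of $\tfrac{\ch_1}{\ch_0}=s$, the Drezet--Le Potier classification gives nonemptiness of the Gieseker moduli space, and the \emph{last-wall criterion} of \cite{CHW}/\cite{LZ15} (applied to $v_n\in\mathfrak R_E$ for an exceptional $E$ with slope less than $s$) shows that $\sigma$ lies \emph{above} the last wall for $v_n$, hence $\sigma$-stable objects of character $v_n$ exist. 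Merely knowing that Gieseker-stable characters are dense below $C_{LP}$ does not suffice, because, as you yourself observe, those objects could a priori be destabilized at $\sigma$ with HN factors whose characters stay away from the kernel. Without the last-wall input (or a substitute), the argument does not close.

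A secondary but real problem is the degenerate case, which you propose to handle ``by the accumulation argument applied along that line.'' For a degenerate $\sigma$ the kernel is a projective line in $\pkp$, and it is not clear that semistable characters accumulate onto the corresponding plane; the paper instead argues by dimension count: the degenerate locus has codimension $2$, the destabilizing wall $W^{k(x)}_P$ for the skyscraper sheaf has codimension $1$ and walls are locally finite, so $\sigma$ lies on the boundary of such a wall; Lemma \ref{lemma:paraandspanplane} forces the kernels along $W^{k(x)}_P\cap\Stab^{\mathrm{nd}}$ onto a vertical line, which must be a segment $l_{ee^+}$, placing $\sigma$ in the closure of $\Theta^{\Geo}_{\cE}\cup\Theta^{\Geo}_{\cE'}$ and hence in $\Stab^{\Alg}$ by Theorem \ref{thm:bndryofthetae}. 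You would need to reproduce something like this; the one-line deferral does not.
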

\begin{proof}
Let $\sigma$ $=$ $(Z,\mathcal P)$ be a stability condition on
$\partial \Stab^{\Geo}$, by the principal of chambers (\cite{BM}
Proposition 3.3) we may assume that the skyscraper sheaf $k(x)$ is a
semistable object with phase
$1$ and destabilized by $F_x$ with the same phase.

I. $\sigma$ is non-degenerate. 

 By Lemma \ref{lemma:localhomeo},
$\Ker Z$ is on the boundary of $\Geo_{LP}$.When $\Ker Z$ is at the
infinity line of $\pkp$, its locus is
$(0,0,1)$ as this is the only asymptotic line of the parabola.
However, $\sigma$ cannot be a stability condition, since $Z(k(x))$
$=$ $Z([0,0,1])$ $=$ $0$, contradicting the fact that $k(x)$ is
semistable on the boundary.  When Ker$Z$ is not at the infinity
line, by Proposition \ref{prop:commomareaofalggeo}, $\sigma$ is either
on the boundary of
$\Theta_\mathcal E$ for an exceptional triple $\mathcal E$ or on the $\db_{\frac{1}{2}}$ but not between any $e^r$ and $e^l$. The first case is due to Theorem \ref{thm:bndryofthetae}. 

\newcommand{\chy}{\frac{\ch_1}{\ch_0}}

The second case is more complicated, we will show that $\sigma$ cannot satisfy the support condition. Let $\Ker Z$ be $(1,s,q)$, then $q=\frac{1}{2}s^2-\frac{1}{2}$. Let $L_1$ be the line on the $\cccp$ across the points $(1,s,q)$ and $\left(1,s-3,\frac{1}{2}(s-3)^2-\frac{1}{2}\right)$. Let $L_2$ be the line $\chy=s$. Let $D_r$ be the set of characters defined as
\[D_r:=\{v\in \KK(\pp)\;  | \; \overline v \text{ is strictly below }L_1\text{ and to the right of }L_2, ||\overline{v}-(1,s,q)||<r\}.\]
Let $v_n$ be a character in $D_{\frac{1}{n}}$, as $v_n$ is below $L_1$, it is below the Le Potier curve. By the classification result of \cite{DP}, $\mathfrak M_{MG}(v_n)$ is non-empty. Adopting the notation in \cite{LZ15}, as $v_n$ is below $L_1$, it is in $\mathfrak R_E$ for exceptional $E$ with $\chy(E)<s$. By the criterion for the last wall in \cite{CHW} or \cite{LZ15}, the stability condition $\sigma$ is above the last wall of $v_n$, in another word, there are $\sigma$-stable objects with character $v_n$. On the other hand, as $||\overline{v}-(1,s,q)||<\frac{1}{n}$ and the $\Ker Z$ is $(1,s,q)$, we have 
\[|Z(v_n)|\lesssim \frac{1}{n} ||v_n||.\]
The stability condition $\sigma$ does not satisfy the support condition. We get the contradiction.

II. $\sigma$ is degenerate.

By \cite{Bri07}, Stab$^{\dag}(\pp)$ $\rightarrow$ Hom$_\Z(\KK(\pp),\C)$ is a local homeomorphism, the degenerate locus has
codimension $2$ in Stab$^{\dag}(\pp)$. By \cite{BM} Proposition 3.3, the
destabilizing wall $W^{k(x)}_P$ for the skyscraper sheaf is of
codimension $1$. As the destabilizing walls are locally finite, we
may assume $\sigma$ is on the boundary of $W^{k(x)}_P$ for a
character $P$ in $\KK_{\R}(\pp)$. By Lemma
\ref{lemma:paraandspanplane}, the kernel of the central charge of any stability condition
on $W^{k(x)}_P\cap$Stab$^{nd}$ is on the line $L_{Pk(x)}$, which is a line parallel to the $\frac{\ch_2}{\ch_0}$-axis on the $\cccp$. As the kernel of $W^{k(x)}_P\cap$Stab$^{nd}$  has
codimension one and is on the boundary of $\Geo_{LP}$, it is the
segment of $l_{ee^+}$ for some exceptional bundle $E$.
$W^{k(x)}_P\cap$Stab$^{nd}$ is contained in the closure of $\Theta_\cE^{\Geo}\cup\Theta_{\mathcal E'}^{\Geo}$ for any $\cE=\{E_1,E_2,E\}$ and $\cE'=\{E,E'_2,E'_3\}$. Therefore, $\sigma$ is contained in the closure of $\Theta_\cE^{\Geo}\cup\Theta_{\mathcal E'}^{\Geo}$. By Theorem
\ref{thm:bndryofthetae},  $\sigma\in\Stab^{\Alg}(\pp)$.
\end{proof}

\subsection{Main result}

\begin{theorem}
The connected component $\sdp$ in $\Stab(\pp)$ that contains the geometric stability conditions is the union
of geometric and algebraic stability conditions.
\[\Stab^\dag(\pp) = \Stab^{\Geo}(\pp) \bigcup \Stab^{\Alg}(\pp).\] \label{thm:stabdag}
\end{theorem}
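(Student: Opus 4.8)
The plan is to realize $S := \Stab^{\Geo}(\pp)\cup\Stab^{\Alg}(\pp)$ as a non-empty subset of $\sdp$ that is simultaneously open and closed; since $\sdp$ is connected, this forces $S=\sdp$. First I would verify $S\subseteq\sdp$ and openness. The geometric part $\Stab^{\Geo}(\pp)$ is connected, being a $\glt$-bundle over the connected region $\Geo_{LP}$ by Corollary \ref{cor:geost}, and by definition it lies in $\sdp$. Each $\Theta_{\mathcal E}$ is parametrized by the connected set of Definition \ref{def: theta E}, and by Proposition \ref{prop:commomareaofalggeo} its head $\Theta^{\Geo}_{\mathcal E}$ is non-empty, so $\Theta_{\mathcal E}$ meets $\Stab^{\Geo}(\pp)$ and hence lies in $\sdp$; therefore $\Stab^{\Alg}(\pp)=\bigcup_{\mathcal E}\Theta_{\mathcal E}\subseteq\sdp$. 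For openness, $\Stab^{\Geo}(\pp)$ is open by Corollary \ref{cor:geost} and the local homeomorphism of Lemma \ref{lemma:localhomeo}, while each $\Theta_{\mathcal E}$ is open as the chart furnished by Proposition \ref{prop:thetaE}; thus $S$ is a union of open sets.

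The substance of the proof is closedness of $S$ in $\sdp$, i.e. that every limit point $\sigma$ of $S$ lying in $\sdp$ already belongs to $S$. I would fix such a $\sigma$ and use the support condition, which makes the destabilizing walls locally finite, to choose a neighborhood $U$ of $\sigma$ meeting only finitely many of the cells of the decomposition in Corollary \ref{cor:celldecomp}. Then $\sigma$ lies in the closure of finitely many cells $C_1,\dots,C_k$, and since finite unions commute with closure, $\sigma\in\overline{C_i}$ for some $i$; so it suffices to show $\overline{C}\cap\sdp\subseteq S$ for a single cell $C$. I treat the three types in turn. If $C=\Stab^{\Geo}(\pp)$, then Proposition \ref{prop:bdryofstabgeo} gives $\partial\Stab^{\Geo}\subseteq\Stab^{\Alg}$, so $\overline{C}\cap\sdp\subseteq S$. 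If $C$ is a tail $\Theta^{\mathrm{Pure}}_{\mathcal E}$ (or all of $\Theta_{\mathcal E}$), then Theorem \ref{thm:bndryofthetae} gives $\partial\Theta_{\mathcal E}\subseteq\Stab^{\Alg}$, and any point of $\partial\Theta^{\mathrm{Pure}}_{\mathcal E}$ interior to $\Theta_{\mathcal E}$ already lies in $\Stab^{\Alg}$, so again $\overline{C}\cap\sdp\subseteq S$. Finally, if $C$ is a leg $\Theta^{\pm}_{E}$, then Lemma \ref{lemma:boundaryofleg} places $\partial\Theta^{\pm}_{E}$ inside $\partial\Stab^{\Geo}\cup\bigcup_{\mathcal E\ni E}\partial\Theta^{\mathrm{Pure}}_{\mathcal E}$, each piece of which lies in $\Stab^{\Alg}\subseteq S$ by the two previous items. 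In every case $\sigma\in S$, so $S$ is closed in $\sdp$, and openness, closedness and connectedness give $S=\sdp$.

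The step I expect to be the main obstacle is the reduction to finitely many cells. A priori a sequence in $S$ converging to $\sigma$ could thread through infinitely many distinct algebraic pieces $\Theta_{\mathcal E_n}$ without lying in the boundary of any single one, and this accumulation must be excluded inside $\sdp$. The key point is that the genuinely pathological accumulation points, namely the points on $\Delta_{\frac{1}{2}}$ lying between no $e^r$ and $e^l$, are precisely those ruled out in Part I of the proof of Proposition \ref{prop:bdryofstabgeo} because they violate the support condition, so they never occur in $\sdp$. For every remaining $\sigma$, local finiteness of walls ensures that only finitely many exceptional objects and triples are semistable near $\sigma$, which supplies the required finite neighborhood. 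Once $S=\sdp$ is established, contractibility (Corollary \ref{cor:main}) follows from the cell structure of $S$ by contracting all tails onto their boundaries with the legs and then the legs onto the heads, as described in the introduction.
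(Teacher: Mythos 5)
Your overall architecture --- realize $S=\Stab^{\Geo}(\pp)\cup\Stab^{\Alg}(\pp)$ as a non-empty open connected subset of $\sdp$ and then show it has no boundary points inside $\sdp$, using the decomposition of Corollary \ref{cor:celldecomp} together with Proposition \ref{prop:bdryofstabgeo}, Theorem \ref{thm:bndryofthetae} and Lemma \ref{lemma:boundaryofleg} --- is exactly the paper's. The gap is in the step you yourself single out as the main obstacle: the reduction to finitely many cells. You justify it by asserting that ``local finiteness of walls ensures that only finitely many exceptional objects and triples are semistable near $\sigma$.'' That assertion is false: by Corollary \ref{cor:regionofEstab}, any neighborhood of a point of $\overline{\Stab^{\Geo}(\pp)}$ contains geometric stability conditions at which infinitely many exceptional bundles (for instance all $\cO(k)$ with $k\gg s$) are stable. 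Moreover Bridgeland's local finiteness of walls is a statement about the walls for a single fixed numerical class, whereas the cells $\Theta^{\pm}_{E}$ and $\Theta^{\mathrm{pure}}_{\mathcal E}$ are indexed by exceptional objects and triples of unbounded numerical class, so it does not bound the number of cells meeting a neighborhood. Your identification of the ``pathological'' accumulation points with the points of $\Delta_{\frac{1}{2}}$ excluded in Part I of Proposition \ref{prop:bdryofstabgeo} only rules out accumulation along the geometric boundary; it says nothing about a priori accumulation of infinitely many distinct tails $\Theta^{\mathrm{pure}}_{\mathcal E_n}$ at a point deep in the non-geometric region, which is precisely the scenario your argument must exclude.

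The paper closes exactly this hole by a more specific device, and without it your proof does not go through. For the tails: any $\sigma\in\sdp$ admits three stable objects $A,B,C$ whose classes generate the Grothendieck group, and these remain stable on a neighborhood $U$ of $\sigma$; since the only stable objects at points of $\Theta^{\mathrm{pure}}_{\mathcal E}$ are the shifts $E_i[n]$ (Lemma \ref{lemma:purepure}), the cell $\Theta^{\mathrm{pure}}_{\mathcal E}$ can meet $U$ only if $\{A,B,C\}$ coincides, up to shift, with $\{E_1,E_2,E_3\}$ --- hence at most one tail is relevant near $\sigma$, and Theorem \ref{thm:bndryofthetae} is then applied to that single triple. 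For the legs no finiteness statement is needed at all: Lemma \ref{lemma:boundaryofleg} places every $\partial\Theta^{\pm}_{E}$ inside $\partial\Stab^{\Geo}\cup\bigcup_{\mathcal E}\partial\Theta^{\mathrm{pure}}_{\mathcal E}$, so a boundary point of $\coprod_{E}\Theta^{\pm}_{E}$ lies in the closure of that union, and one concludes using that $\partial\Stab^{\Geo}$ is closed together with the one-tail-at-a-time argument just described. If you replace your appeal to local finiteness by these two arguments, your proof becomes the paper's.
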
 
\begin{proof}
We show that the boundary of $\coprod_{E\text{ exc sheaf}}
\Theta^{\pm}_{E}$ and $\coprod_{\mathcal E\text{ exc triple}}
\Theta^{\mathrm{pure}}_{\mathcal E}$ is contained in
$\Stab^{\Geo}(\textbf P^2) \bigcup \Stab^\dag(\textbf P^2)$. Together
with Corollary \ref{cor:celldecomp} and \ref{prop:bdryofstabgeo}, we
get the conclusion.

Let $\sigma$ be a stability condition on the boundary of
$\coprod_{\mathcal E\text{ exc triple}}
\Theta^{\mathrm{pure}}_{\mathcal E}$. $\sigma$ has at least three
stable objects $A$, $B$, $C$ to generate the Grothendieck group of
D$^b(\textbf P^2)$. There is an open neighborhood of $\sigma$ at
where $A$, $B$, $C$ are always stable. Since the only stable objects
in $\Theta^{\mathrm{pure}}$ are $E_i[n]$ for $i=1,2,3$ and $n\in \mathbb Z$,
$\sigma$ is on the boundary of at most one $\Theta^{\mathrm{pure}}$. By Theorem
\ref{thm:bndryofthetae}, $\sigma$ is in Stab$^{\Alg}$.

Let $\sigma$ be a stability condition on the boundary of
$\coprod_{E\text{ exc sheaf}} \Theta^{\pm}_{E}$, then for any open
neighborhood $U$ of $\sigma$, $U$ intersect the union of boundaries
of $\Theta^\pm_E$. Now by Lemma \ref{lemma:boundaryofleg}, $U$
intersect the union of $\partial$Stab$^{\Geo}$(\textbf P$^2$) and
$\partial \Theta^{\mathrm{pure}}_{\mathcal E}$. $\sigma$ is on the boundary
of either Stab$^{\Geo}$(\textbf P$^2$) or $\coprod_{\mathcal E\text{
exc triple}} \Theta^{\mathrm{pure}}_{\mathcal E}$. By Proposition
\ref{prop:bdryofstabgeo} and the previous paragraph on the boundary of $\coprod_{\mathcal E\text{
exc triple}} \Theta^{\mathrm{pure}}_{\mathcal E}$, $\sigma$ is in Stab$^{\Alg}(\pp)$.
\end{proof}

\begin{cor}
$\Stab^\dag(\pp)$ is contractible. \label{cor:main}
\end{cor}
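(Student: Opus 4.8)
The plan is to show that the inclusion $\sg\hookrightarrow\sdp$ is a homotopy equivalence and that $\sg$ is contractible; contractibility of $\sdp$ follows at once. The starting point is the cell decomposition of Corollary \ref{cor:celldecomp},
\[\sdp=\sg\ \coprod\ \Big(\coprod_{E}\Theta^{\pm}_{E}\Big)\ \coprod\ \Big(\coprod_{\cE}\Theta^{\mathrm{pure}}_{\cE}\Big),\]
in which the geometric part $\sg$ is open and the legs and tails are the non-geometric algebraic cells. The idea, following the sketch in the introduction, is that each tail and each leg is a collar attached to the rest of the space along its frontier, so that it can be absorbed without changing the homotopy type.

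To make this precise I would work in the phase coordinates on each chart $\Theta_{\cE}$, writing $a=\phi_2-\phi_1$ and $b=\phi_3-\phi_2$ (the remaining coordinates $m_1,m_2,m_3$ and $\phi_1$ being free, contractible directions). In these coordinates the tail is $\{a\ge 1,\ b\ge 1\}$, the legs are the parts of $\{b<1\}$ and $\{a<1\}$ lying outside $\sg$, and the head $\Theta^{\Geo}_{\cE}=\Theta_{\cE}\cap\sg$ is the region cut out by $\mathrm{MZ}_{\cE}$ (Proposition \ref{prop:commomareaofalggeo}). One checks that the tail is a collar $\cong(\text{tail--leg wall})\times[0,1)$, glued to the legs along $\{\min(a,b)=1\}$, and that each leg is a collar $\cong(\partial\sg\text{-piece})\times[0,1)$, glued to $\sg$ along $\partial\sg$ (recall that a leg meets $\sg$ only along its frontier, since $\sg$ is open). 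Absorbing the tails shows $\sg\cup\bigcup_E\Theta^{\pm}_E\hookrightarrow\sdp$ is a homotopy equivalence, and absorbing the legs shows $\sg\hookrightarrow\sg\cup\bigcup_E\Theta^{\pm}_E$ is one; composing gives that $\sg\hookrightarrow\sdp$ is a homotopy equivalence. The simultaneous absorption is legitimate because the collars are disjoint: distinct tails are separated (Lemma \ref{lemma:purepure}), distinct legs are separated or equal (Proposition \ref{prop:legsareunique}), and the frontier of a leg consists only of $\partial\sg$ and tail walls (Lemma \ref{lemma:boundaryofleg}, together with Theorem \ref{thm:bndryofthetae}).

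It remains to contract $\sg$. By Corollary \ref{cor:geost}, $\sg$ is a principal $\glt$-bundle over $\Geo_{LP}$. The group $\glt$, being the universal cover of $\GL^+(2,\R)\simeq S^1$, is contractible. The base $\Geo_{LP}$ is contractible as well: it is upward-closed along vertical rays, since the exceptional segments $l_{ee^+}$ are vertical (as $e^+=e-(0,0,\ch_0^{-2})$) and $\Geo_{LP}$ lies strictly above both them and $C_{LP}$; hence pushing $\ch_2/\ch_0\to+\infty$ deformation retracts $\Geo_{LP}$ to a point. A principal bundle over a contractible base is trivial, so $\sg\cong\Geo_{LP}\times\glt$ is contractible, and therefore so is $\sdp$.

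The step I expect to be the main obstacle is establishing the collar structure globally and consistently. A single leg $\Theta^{+}_{E}$ is visible in the charts of all triples containing $E$ as first object, and these charts are identified by the mutations of Proposition \ref{prop:defoflegs}; one must verify that the collar coordinate is well defined across these overlaps and that the collars coming from different triples fit together into one genuine collar on $\partial\sg$. The delicate point is the open/closed behaviour at the head walls: because a leg meets $\sg$ only along the frontier $\partial\sg$ and $\sg$ is open, there is no deformation retraction of $\sdp$ onto $\sg$ fixing $\sg$ pointwise, and one must instead argue via homotopy equivalence of inclusions (collar absorption), which is exactly what the explicit coordinates are needed to justify.
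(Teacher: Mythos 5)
Your proposal follows the same strategy as the paper's proof: absorb the tails into the legs, absorb the legs into the geometric part, and then use that $\sg$ is a $\glt$-bundle over the contractible region $\Geo_{LP}$ (Corollary \ref{cor:geost}); the paper carries out the two absorption steps by choosing pairwise disjoint open neighbourhoods of the tails (e.g.\ $\phi_3-\phi_2>\tfrac12$, $\phi_2-\phi_1>\tfrac12$ in $\Theta_{\cE}$) and of the legs (e.g.\ $\Theta^{\Geo}_{\cE}\cup\Theta^{+}_{E}$), which is exactly your collar picture. Your additional care about the open/closed behaviour along $\partial\sg$ and the explicit vertical retraction of $\Geo_{LP}$ only spell out what the paper leaves implicit.
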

\begin{proof}
Each $\Theta^{\mathrm{pure}}_{\mathcal E}$ has an open neighborhood in
$\Theta_{\mathcal E}$ which does not intersects any other
$\Theta^{\mathrm{pure}}_{\mathcal E'}$. For example, one may choose the
region of $\phi_3-\phi_2$ $>$ $\frac{1}{2}$ and $\phi_2-\phi_1$ $>$
$\frac{1}{2}$ in $\Theta_{\mathcal E}$. As Stab$^\dag$ admits a
metric, we may then choose open neighborhoods of
$\Theta^{\mathrm{pure}}_{\mathcal E}$'s which do not intersect with each other.
By Corollary \ref{cor:celldecomp}, Stab$^\dag$(\textbf P$^2$) is
homotopic to its subspace Stab$^{\Geo}$(\textbf P$^2$)$\coprod
(\coprod_{E\text{ exc sheaves}} \Theta^{\pm}_{E})$. Each $\Theta^{+}_{E}$ has an open
neighborhood in Stab$^{\Geo}$(\textbf P$^2$)$\coprod (\coprod_{E\text{ exc sheaves}}
\Theta^{\pm}_{E})$ which does not intersects any other
$\Theta^{\pm}_{E}$. For example, we may choose any exceptional
triple $\mathcal E$ with $E_1$ $=$ $E$ and take
$\Theta^{\Geo}_{\mathcal E}$ $\cup$ $\Theta^{+}_{E}$ as the open
neighborhood. We may contract Stab$^{\Geo}$(\textbf P$^2$)$\coprod
(\coprod_{E\text{ exc sheaves}} \Theta^{\pm}_{E})$ to Stab$^{\Geo}$
which is a contractible space.
\end{proof}

\bibliographystyle{abbrv}\bibliography{stabref}

Chunyi Li \space\space\space\space\space\space\space\space\space Email address: Chunyi.Li@ed.ac.uk 

School of Mathematics, The University of Edinburgh, James Clerk Maxwell Building, The King's Buildings, Mayfield Road, Edinburgh, Scotland EH9 3JZ, United Kingdom

\end{document}